\def\namedlabel#1#2{\begingroup
    #2%
    \def\@currentlabel{#2}%
    \phantomsection\label{#1}\endgroup
}
\newcommand{\pushright}[1]{\ifmeasuring@#1\else\omit\hfill$\displaystyle#1$\fi\ignorespaces}
\newcommand{\pushleft}[1]{\ifmeasuring@#1\else\omit$\displaystyle#1$\hfill\fi\ignorespaces}
\newcommand\Z{\mathbb{Z}}
\newcommand\N{\mathbb{N}}
\newcommand\F{\mathbb{F}}
\newcommand\kk{\Bbbk}
\newcommand\id{\mathrm{Id}}
\newcommand\fh{\mathfrak{h}}
\newcommand\cF{\mathcal{F}}
\newcommand\cG{\mathcal{G}}
\newcommand\cK{\mathcal{K}}
\newcommand\pj{\mathrm{proj}}
\newcommand\Vect{\mathrm{grVect}}
\newcommand\tr{\mathrm{tr}}
\newcommand{\md}{\textup{-mod}}
\newcommand{\pmd}{\textup{-pmod}}
\newcommand{\grm}{\textup{-grmod}}
\newcommand{\grp}{\textup{-grpmod}}
\newcommand\pR[1]{\prescript{R}{}{#1}}
\newcommand\pr[1]{\prescript{r}{}{#1}}
\newcommand\pL[1]{\prescript{L}{}{#1}}
\newcommand\qbin[2]{{#1 \atopwithdelims[] #2}}
\newcommand\Ner{N^{d,\epsilon}}
\newcommand\ts{\textstyle}
\DeclareMathOperator{\Hom}{Hom}
\DeclareMathOperator{\HOM}{HOM}
\DeclareMathOperator{\End}{End}
\DeclareMathOperator{\Res}{Res}
\DeclareMathOperator{\Ind}{Ind}
\DeclareMathOperator{\qdim}{grdim}
\DeclareMathOperator{\grdim}{grdim}
\newtheorem{theo}{Theorem}[section]
\newtheorem{prop}[theo]{Proposition}
\newtheorem{lem}[theo]{Lemma}
\newtheorem{cor}[theo]{Corollary}
\theoremstyle{definition}
\newtheorem{defin}[theo]{Definition}
\newtheorem{rem}[theo]{Remark}
\newtheorem{eg}[theo]{Example}
\numberwithin{equation}{section}
  \newcommand{\comments}[1]{
    \begin{center}
      \parbox{6.5 in}{
        \color{red}
          {\footnotesize \textbf{Comments:} #1}
        \color{black}}
    \end{center}}
  \newcommand{\comments}[1]{}
  \newcommand{\details}[1]{
      \ \\
      \color{OliveGreen}
        {\footnotesize \textbf{Details:} #1}
      \color{black}
      \\
  }
  \newcommand{\details}[1]{}
  \newcommand{\prelim}{\textsc{Preliminary version} \bigskip}
  \newcommand{\prelim}{}
\begin{document}
%
%%%%%%%%%%%%%%%%%%%%%%%%%%%%%%%%%%%

\title{Towers of graded superalgebras categorify the twisted Heisenberg double}

\author{Daniele Rosso}
\address{D.~Rosso: Department of Mathematics and Statistics, University of Ottawa, and Centre de Recherches Math\'ematiques, Montr\'eal}
\urladdr{\url{http://mysite.science.uottawa.ca/drosso/}}
\email{drosso@uottawa.ca}

\author{Alistair Savage}
\address{A.~Savage: Department of Mathematics and Statistics, University of Ottawa}
\urladdr{\url{http://AlistairSavage.ca}}
\email{alistair.savage@uottawa.ca}
\thanks{The second author was supported by a Discovery Grant from the Natural Sciences and Engineering Research Council of Canada.  The first author was supported by the Centre de Recherches Math\'ematiques and the Discovery Grant of the second author.}

\begin{abstract}
  We show that the Grothendieck groups of the categories of finitely-generated graded supermodules and finitely-generated projective graded supermodules over a tower of graded superalgebras satisfying certain natural conditions give rise to twisted Hopf algebras that are twisted dual.  Then, using induction and restriction functors coming from such towers, we obtain a categorification of the twisted Heisenberg double, introduced in~\cite{RS14}, and its Fock space representation.  We show that towers of wreath product algebras (in particular, the tower of Sergeev superalgebras) and the tower of nilCoxeter graded superalgebras satisfy our axioms.  In the latter case, one obtains a categorification of the quantum Weyl algebra.
\end{abstract}

\subjclass[2010]{16D90, 17A70, 16T05}
\keywords{Twisted Heisenberg double, tower of algebras, graded superalgebra, Frobenius graded superalgebra, categorification, twisted Hopf algebra, Fock space, nilCoxeter algebra}
%\date{\today}

\prelim

\maketitle
\thispagestyle{empty}

\tableofcontents

%%%%%%%%%%%%%%%%%%%%%%%%%%%%%%%%%%%%%%%%%%%%%%%%%%%%%%%%%%%%%%%%%%%%
%
\section{Introduction}
%
%%%%%%%%%%%%%%%%%%%%%%%%%%%%%%%%%%%%%%%%%%%%%%%%%%%%%%%%%%%%%%%%%%%%

The Heisenberg double is an associative algebra constructed in a natural way from a Hopf algebra.  As a $\kk$-module, the Heisenberg double $\fh(H^+,H^-)$ of a Hopf algebra $H^+$ over a ring $\kk$, with dual $H^-$, is isomorphic to $H^+ \otimes_\kk H^-$.  The factors $H^\pm$ are subalgebras, and there is an explicit commutation relation between the two factors.  The Heisenberg double can be viewed naturally as the subalgebra of $\End_\kk H^+$ generated by left multiplication by $H^+$ and the left regular action of $H^-$ on $H^+$.  From this point of view, one sees that the Heisenberg double has a natural representation on $H^+$, called the Fock space representation.  The most well-known example is when $H^+$ and $H^-$ are both the self-dual Hopf algebra of symmetric functions, in which case the Heisenberg double is the classical infinite-dimensional Heisenberg algebra.

In~\cite{RS14}, the authors introduced a twisted version of the Heisenberg double.  This involved replacing Hopf algebras by \emph{twisted Hopf algebras} (see Section~\ref{sec:general-construction}) and the Hopf pairing between $H^+$ and $H^-$ by a \emph{twisted Hopf pairing} (see Definition~\ref{def:hopf-pairing}).  If these data satisfy a certain natural compatibility condition (see Definition~\ref{def:compatible-pair}), one can construct the \emph{twisted Heisenberg double}, which also has a natural Fock space representation.  In the case that the twistings are trivial, one recovers the usual notion of the Heisenberg double.

One of the main motivations for the introduction of the twisted Heisenberg double in~\cite{RS14} was applications to categorification.  A conjectural categorification of the infinite-dimensional Heisenberg algebra itself was first described by Khovanov in~\cite{Kho10} (see also~\cite{LS13}).  Since then, the Heisenberg algebra has been categorified in several ways.  We refer the reader to the expository paper~\cite{LS12} for an overview of these approaches.  Motivated by these constructions and results on dual Hopf algebras obtained from towers of algebras (see~\cite{BL09}), a weak categorification of the more general Heisenberg double was developed in~\cite{SY13}.  The dual Hopf algebras $H^\pm$ correspond to the Grothendieck groups of the categories of finitely-generated modules and finitely-generated projective modules of a tower of algebras satisfying some natural conditions.  Then, using certain induction and restriction functors, one obtains a categorification of the associated Heisenberg double and its Fock space representation.

The goal of the current paper is to extend the results of~\cite{SY13} to the setting of towers of \emph{graded} \emph{super}algebras.  Since graded algebras and superalgebras are both ubiquitous in the categorification literature, the extension to this setting is a natural one.  Typically, passing to graded algebras results, on the level of Grothendieck groups, in a quantum deformation.  In the current paper, it results in the move from Hopf algebras to twisted Hopf algebras, and from the Heisenberg double to the twisted Heisenberg double.  On the other hand, generalizing from algebras to superalgebras introduces extra structure on the Grothendieck groups coming from the parity shift functor.  After background on the twisted Heisenberg double in Section~\ref{sec:general-construction} and on categories of supermodules over graded superalgebras in Section~\ref{sec:modules}, we introduce our main objects of study, towers of graded superalgebras, in Section~\ref{sec:towers}.  In Section~\ref{sec:categorification}, we present our main results, Theorem~\ref{theo:categorification} and Corollary~\ref{cor:Heis-double-cat}, that so-called \emph{strong} \emph{compatible} towers of graded superalgebras give rise to a (weak) categorification of the twisted Heisenberg double and its Fock space representation.

We also include in this paper several applications of our main result.  First, in Section~\ref{sec:frob-gr-alg}, we recall some facts about Frobenius graded superalgebras, using a slightly more general definition of these objects than is often considered in the literature.  We show that towers of Frobenius graded superalgebras automatically satisfy one of the important conditions of a strong tower.  Namely, induction is conjugate shifted right adjoint to restriction (see Proposition~\ref{prop:Frobenius-twisted}).  In Section~\ref{sec:wreath}, we discuss a large class of examples of strong compatible towers, namely, towers of wreath product algebras (see Corollary~\ref{cor:wreath-strong-compatible}).  This class includes the tower of Sergeev superalgebras and the algebras considered in the Heisenberg categorification of~\cite{CL12}.  We conclude, in Section~\ref{sec:Weyl}, with a discussion of the tower of nilCoxeter algebras, viewed as graded superalgebras.  This tower is also strong and compatible, and yields a categorification of the quantum Weyl algebra.

\iftoggle{detailsnote}{
\medskip

\paragraph{\textbf{Note on the arXiv version}} For the interested reader, the tex file of the arXiv version of this paper includes hidden details of some straightforward computations and arguments that are omitted in the pdf file.  These details can be displayed by switching the \texttt{details} toggle to true in the tex file and recompiling.
}{}

\medskip

%%%%%%%%%%%%%%%%%%%%%%
\subsection*{Notation}
%%%%%%%%%%%%%%%%%%%%%%

We let $\N$ and $\N_+$ denote the set of nonnegative and positive integers respectively. We let $\Z_2=\Z/2\Z$ be the ring of integers mod $2$.  We let $\F$ be a field of characteristic not equal to two.  By a slight abuse of terminology, we will use the terms \emph{module} and \emph{representation} interchangeably.

%%%%%%%%%%%%%%%%%%%%%%%%%%%%%%
\subsection*{Acknowledgements}
%%%%%%%%%%%%%%%%%%%%%%%%%%%%%%

The authors would like to thank A.~Licata, J.~Sussan, and O.~Yacobi for useful conversations.

%%%%%%%%%%%%%%%%%%%%%%%%%%%%%%%%%%%%%%%%%%%%%%%%%%%%%%%%%%%%%%%%%%%%
%
\section{The twisted Heisenberg double} \label{sec:general-construction}
%
%%%%%%%%%%%%%%%%%%%%%%%%%%%%%%%%%%%%%%%%%%%%%%%%%%%%%%%%%%%%%%%%%%%%

In this section, we review the concept, introduced in~\cite{RS14}, of the twisted Heisenberg double and its Fock space representation.   We fix a commutative ring $\kk$ and all algebras, coalgebras, bialgebras and Hopf algebras will be over $\kk$.  We will denote the multiplication, comultiplication, unit, counit and antipode of a Hopf algebra by $\nabla$, $\Delta$, $\eta$, $\varepsilon$ and $S$ respectively.  We write the product $\nabla$ as juxtaposition when this will not cause confusion, and use Sweedler notation
\[ \ts
  \Delta(a) = \sum_{(a)} a_{1} \otimes a_{2}
\]
for coproducts.  All tensor products are over $\kk$ unless otherwise indicated.

Let $(\Lambda,+)$ be a commutative monoid isomorphic (as a monoid) to $\N^r$.  Consider $H = \bigoplus_{\lambda \in \Lambda} H_\lambda$, where each $H_\lambda$, $\lambda \in \Lambda$, is finitely generated and free as a $\kk$-module, such that $(H,\nabla,\varepsilon)$ is a $\Lambda$-graded algebra and $(H,\Delta,\eta)$ is a $\Lambda$-graded coalgebra.  This means, in particular, that the following conditions are satisfied:
\begin{gather*} \ts
  \nabla(H_\lambda \otimes H_\mu) \subseteq H_{\lambda + \mu},\quad \Delta(H_\lambda) \subseteq \bigoplus_{\mu+\nu=\lambda} H_\mu \otimes H_\nu,\quad \lambda,\mu,\nu \in \Lambda, \\
  \eta(\kk) \subseteq H_0,\quad \varepsilon(H_\lambda) = 0 \text{ for } \lambda \in \Lambda\setminus\{0\}.
\end{gather*}

Fix $c \in \kk^\times$, and let $\chi=(\chi',\chi'')$ be a pair of biadditive maps $\chi',\chi'' \colon \Lambda \times \Lambda\to\Z$.  Define a new multiplication $*_\chi$ on $H \otimes H$ by the condition that, for homogeneous elements $a_i, b_i \in H$, $i=1,2$, we have
\begin{equation} \label{eq:twisted-mult}
  (a_1\otimes a_2) *_\chi (b_1\otimes b_2) = c^{\chi'(|a_2|,|b_1|)+\chi''(|a_1|,|b_2|)}a_1b_1\otimes a_2b_2,
\end{equation}
where $|a|$ denotes the degree of a homogeneous element $a \in H$.  (Whenever we write an expression involving $|a|$ for some $a \in H$, we implicitly assume that $a$ is homogeneous.)  Since $\chi',\chi''$ are biadditive, $*_\chi$ is associative, and we denote by $(H\otimes H)_\chi$ this \emph{twisted} associative algebra structure.

We say that $H$ is a \emph{$\Lambda$-graded connected twisted bialgebra}, or, more precisely, a \emph{$(c,\chi)$-bialgebra} if $H_0 = \kk 1_H$ and
\[
  \Delta \colon H \to (H\otimes H)_\chi
\]
is an algebra homomorphism.  A $\Lambda$-graded connected twisted bialgebra is always a \emph{twisted Hopf algebra}, that is, there exists a $\kk$-linear map $S \colon H\to H$ called the antipode that satisfies the condition
\[
  \nabla(\id\otimes S)\Delta=\eta\varepsilon=\nabla(S\otimes \id)\Delta.
\]
Thus we will also call such an object a \emph{$\Lambda$-graded connected $(c,\chi)$-Hopf algebra}, or simply a \emph{$(c,\chi)$-Hopf alegbra}.  When we wish to leave the data $(c,\chi)$ implied, we refer to it simply as a \emph{twisted Hopf algebra}.  We will write $(q,\chi',\chi'')$ instead of $(q,(\chi',\chi''))$ when we wish to make the components of $\chi$ explicit.

\begin{defin}[Twisted Hopf pairing]\label{def:hopf-pairing}
  Suppose $H$ and $H'$ are twisted Hopf algebras and $\gamma = (\gamma', \gamma'')$ is a pair of biadditive maps $\gamma', \gamma'' \colon \Lambda \times \Lambda \to \Z$ .  Then a \emph{$(c,\gamma)$-twisted Hopf pairing} of $H$ and $H'$ is a bilinear map $\langle -, - \rangle \colon H \times H' \to \kk$ such that $\langle -,-\rangle |_{H_\lambda\times H'_\mu}  \equiv 0$ when $\lambda, \mu \in \Lambda$, $\lambda\neq\mu$, and
  \begin{gather*} \ts
    \langle xy, a \rangle = c^{\gamma'(|x|,|y|)} \langle x \otimes y, \Delta(a) \rangle, \\ \ts
    \langle x, ab \rangle = c^{\gamma''(|a|,|b|)}\langle \Delta(x), a \otimes b \rangle, \\
    \langle 1_H, a \rangle = \varepsilon(a),\quad \langle x, 1_{H'} \rangle = \varepsilon(x),
  \end{gather*}
  for all homogeneous $x,y \in H$, $a,b \in H'$, where we define
  \[
    \langle -, - \rangle \colon (H \otimes H) \otimes (H' \otimes H') \to \kk,\quad \langle x \otimes y, a \otimes b \rangle = \langle x, a \rangle \langle y, b \rangle,\quad x,y \in H,\ a,b \in H'.
  \]
  We will write $(c,\gamma',\gamma'')$ instead of $(c,(\gamma',\gamma''))$ when we wish to make the components of $\gamma$ explicit.  A $(1,\gamma)$-twisted Hopf pairing (equivalently, a $(c,0,0)$-twisted Hopf pairing) is simply called a \emph{Hopf pairing}.
\end{defin}

\begin{defin}[Dual pair] \label{def:dual-pair}
  We say that $(H^+, H^-)$ is a \emph{$(c,\gamma)$-dual pair} of twisted Hopf algebras if $H^+$ and $H^-$ are both twisted Hopf algebras, and there exists a $(c,\gamma)$-twisted Hopf pairing $\langle -, - \rangle \colon H^- \times H^+ \to \kk$ such that $\langle -,- \rangle|_{H^{-}_\lambda \times H^+_\lambda}$, $\lambda \in \Lambda$, is a perfect pairing.  When we wish to leave the data $(c,\gamma)$ implicit, we say that the pair $(H^+,H^-)$ is \emph{twisted dual}.
\end{defin}

For a biadditive map $\zeta \colon \Lambda \times \Lambda \to \Z$, we define
\[
  \zeta^T \colon \Lambda \times \Lambda \to \Z,\quad \zeta^T(\lambda,\mu)=\zeta(\mu,\lambda),\quad \lambda,\mu \in \Lambda.
\]

\begin{lem}[{\cite[Lem.~2.8]{RS14}}] \label{lem:dual-twisted-Hopf}
  Suppose $(H^+,H^-)$ is a $(c,\gamma)$-dual pair of twisted Hopf algebras and $H^+$ is a $(c,\chi)$-Hopf algebra.  Then $H^-$ is a $(c,\xi$)-Hopf algebra, where
  \begin{equation} \label{eq:xi-dual}
    \xi = (\xi',\xi''),\quad \xi' = (\chi')^T + \gamma' - (\gamma'')^T,\quad \xi'' = \chi'' + \gamma' - \gamma''.
  \end{equation}
\end{lem}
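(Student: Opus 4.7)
The plan is to use the perfect pairing between $H^+$ and $H^-$ on each graded piece to dualize the $(c,\chi)$-bialgebra structure on $H^+$ into a bialgebra structure on $H^-$, and then compute the twisting that arises. Since each $H^\pm_\lambda$ is finitely generated and free over $\kk$ with perfect pairing, the pairing extends to a perfect pairing on matching bidegree tensor products, so identities in $H^- \otimes H^-$ can be verified after pairing with arbitrary $x \otimes y \in H^+ \otimes H^+$. The coalgebra structure on $H^-$ comes from dualizing multiplication on $H^+$ via the first pairing axiom, and the (associative) multiplication on $H^-$ comes from dualizing comultiplication via the second. The connectedness and $\N^r$-grading ensure automatic existence of an antipode, so the only substantive point is to identify the twisting $\xi$ for which $\Delta \colon H^- \to (H^-\otimes H^-)_\xi$ is an algebra homomorphism.

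Concretely, fix homogeneous $a,b \in H^-$ and homogeneous $x,y \in H^+$ of matching bidegree. To compute the left-hand side $\langle \Delta(ab), x \otimes y \rangle$, I would apply the pairing axiom for comultiplication in $H^-$ to turn this into $c^{-\gamma''(|x|,|y|)} \langle ab, xy \rangle$, then apply the axiom for multiplication in $H^-$ to turn it into
\[
  c^{-\gamma''(|x|,|y|)+\gamma'(|a|,|b|)} \langle a \otimes b, \Delta(xy) \rangle.
\]
Now substitute the $(c,\chi)$-bialgebra identity $\Delta(xy) = \Delta(x) *_\chi \Delta(y)$ and expand using the definition of $*_\chi$ from~\eqref{eq:twisted-mult}; then split each factor $\langle a, x_1 y_1 \rangle$ and $\langle b, x_2 y_2 \rangle$ via the multiplication axiom, picking up further factors of $c^{\gamma''}$. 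The result is a sum over Sweedler indices, weighted by $c$ to a sum of $\chi$- and $\gamma$-exponents evaluated on Sweedler degrees.

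For the right-hand side $\langle \Delta(a) *_\xi \Delta(b), x \otimes y \rangle$, expand via the definition of $*_\xi$, obtaining weights $c^{\xi'(|a_2|,|b_1|)+\xi''(|a_1|,|b_2|)}$, and then use the comultiplication axiom twice (once on each tensor factor) to convert $\langle a_i b_i, \cdot\rangle$ into sums involving $\Delta(x)$ and $\Delta(y)$, with factors of $c^{\gamma'}$. After re-indexing so that both sides are sums over the same Sweedler indices for $\Delta(x), \Delta(y), \Delta(a), \Delta(b)$, the pairing becomes a common product $\langle a_1, x_1 \rangle \langle a_2, y_1 \rangle \langle b_1, x_2 \rangle \langle b_2, y_2 \rangle$ (up to reordering reflecting how the pairing distributes over tensor products), and the two sides agree if and only if the total exponents of $c$ on homogeneous Sweedler components match.

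Setting the two exponent expressions equal and using biadditivity and the fact that $|a_1|+|a_2|=|a|$ etc., one obtains a linear identity in $|a_i|, |b_j|$ that forces exactly the two equations $\xi'=(\chi')^T+\gamma'-(\gamma'')^T$ and $\xi''=\chi''+\gamma'-\gamma''$. The main obstacle is the bookkeeping: keeping the four Sweedler decompositions straight and ensuring that the transpose appears in the correct place when $\chi'$, evaluated at Sweedler degrees of $x$ and $y$, is translated into an expression in Sweedler degrees of $a$ and $b$ via the bidegree-matching condition (which swaps the roles of the two tensor factors). Once this matching is done, the perfectness of the pairing on each graded component (and hence on each bidegree of $H^-\otimes H^-$) upgrades the pairing-level equality into the desired identity $\Delta(ab) = \Delta(a) *_\xi \Delta(b)$ in $(H^-\otimes H^-)_\xi$.
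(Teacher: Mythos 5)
Your proof is correct and follows what is essentially the only natural route: dualize the $(c,\chi)$-bialgebra compatibility of $H^+$ through the perfect pairing and read off the twist. (Note the paper itself cites this lemma from~\cite{RS14} without reproducing the proof, so there is no internal argument to compare against; but the approach you describe is the standard one and presumably the original.) The subtlety you flag is indeed the crux: after matching bidegrees across the pairing one has $|x|=|a_1|+|b_1|$ and $|y|=|a_2|+|b_2|$ (not $|a_1|+|a_2|$ etc.), so when you expand $-\gamma''(|x|,|y|)$ by biadditivity the cross-terms $\gamma''(|a_1|,|a_2|)$ and $\gamma''(|b_1|,|b_2|)$ cancel against the two $\gamma''$ factors coming from splitting $\langle a,x_1y_1\rangle$ and $\langle b,x_2y_2\rangle$, leaving exactly $-\gamma''(|b_1|,|a_2|)-\gamma''(|a_1|,|b_2|)$. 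Grouping the surviving terms by which pair of Sweedler degrees they involve then yields $\xi'(|a_2|,|b_1|)=\chi'(|b_1|,|a_2|)+\gamma'(|a_2|,|b_1|)-\gamma''(|b_1|,|a_2|)$ and $\xi''(|a_1|,|b_2|)=\chi''(|a_1|,|b_2|)+\gamma'(|a_1|,|b_2|)-\gamma''(|a_1|,|b_2|)$, which is precisely~\eqref{eq:xi-dual}. One small remark: since $H^-$ is already assumed to be a twisted Hopf algebra in Definition~\ref{def:dual-pair}, you do not even need to invoke the connectedness argument for the antipode; the entire content of the lemma is identifying $\xi$, which is what the computation does.
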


For the remainder of this section, we fix a $(c,\gamma)$-dual pair $(H^+,H^-)$ of twisted Hopf algebras, where $H^{+}$ is a $(c,\chi)$-Hopf algebra and $H^-$ is a $(c,\xi)$-Hopf algebra, with $\xi$ given by~\eqref{eq:xi-dual}.

Any $a \in H^+$ defines an element $\pL{a} \in \End_\kk H^+$ by left multiplication.  Similarly, any $x \in H^-$ defines an element $\pR{x} \in \End_\kk H^- $ by right multiplication, whose adjoint $\pR{x}^*$ is an element of $\End_\kk H^+$.  (In the case that $H^+$ or $H^-$ is commutative, we often omit the superscript $L$ or $R$.)  In this way we have $\kk$-algebra homomorphisms
\begin{gather}
  H^+ \hookrightarrow \End_\kk H^+,\quad a \mapsto \pL{a}, \label{eq:H+action} \\
  H^- \hookrightarrow \End_\kk H^+,\quad x \mapsto \pR{x}^*. \label{eq:H-action}
\end{gather}
The action of $H^-$ on $H^+$ given by~\eqref{eq:H-action} is called the \emph{left regular action}.  The maps~\eqref{eq:H+action} and~\eqref{eq:H-action} are both injective.  For $x \in H^-$ and $a,b \in H^+$, we have, by \cite[Lem.~3.2]{RS14} and~\eqref{eq:xi-dual},
\begin{equation} \label{eq:H-general-relation} \ts
  \pR{x}^*(ab) = \sum_{(x)} c^{\gamma'(|a|-|x_1|,|x_2|) + \gamma'(|b|-|x_2|,|x_1|) + \gamma''(|x_1|,|x_2|) + \chi'(|x_1|, |b| - |x_2|) + \chi''(|a|-|x_1|,|x_2|)} \pR{x_1}^*(a) \pR{x_2}^*(b).
\end{equation}
\details{
  Using~\eqref{eq:xi-dual}, the exponent of $q$ in~\cite[Lem.~3.2]{RS14} (which corresponds to $c$ here) becomes
  \begin{align*}
    \gamma''&(|x_1|,|b|-|x_2|) + \gamma''(|a|,|x_2|) + \xi'(|b|-|x_2|,|x_1|)+\xi''(|a|-|x_1|,|x_2|) \\
    &= \gamma''(|x_1|,|b|-|x_2|) + \gamma''(|a|,|x_2|) + \chi'(|x_1|,|b|-|x_2|) + \gamma'(|b|-|x_2|,|x_1|) - \gamma''(|x_1|,|b|-|x_2|) \\
    &\pushright{+ \chi''(|a|-|x_1|,|x_2|) + \gamma'(|a|-|x_1|,|x_2|) - \gamma''(|a|-|x_1|,|x_2|)} \\
    &= \gamma''(|x_1|,|x_2|) + \chi'(|x_1|,|b|-|x_2|) + \gamma'(|b|-|x_2|,|x_1|) + \chi''(|a|-|x_1|,|x_2|) + \gamma'(|a|-|x_1|,|x_2|).
  \end{align*}
}

\begin{defin}[Compatible dual pair] \label{def:compatible-pair}
  We say that the $(c,\gamma)$-dual pair $(H^+,H^-)$ is \emph{compatible} if there is a choice of $\chi$ such that $H^+$ is a $(c,\chi)$-Hopf algebra and
  \begin{equation} \label{eq:compatibility}
    \chi' = - (\gamma')^T.
  \end{equation}
\end{defin}

If the pair $(H^+,H^-)$ is compatible and we choose $\chi$ to satisfy~\eqref{eq:compatibility}, then~\eqref{eq:H-general-relation} implies that, for all $x \in H^-$ and $a \in H^+$, we have,
\begin{equation} \ts
  \begin{split}
    \pR{x}^* \pL{a} &= \ts \sum_{(x)} c^{\gamma'(|a|-|x_1|, |x_2|) + \gamma''(|x_1|, |x_2|) + \chi''(|a|-|x_1|, |x_2|)} \pL{\left(\pR{x_1}^*(a)\right)} \pR{x_2}^* \\
    &= \ts \sum_{(x)} c^{\gamma''(|a|,|x_2|)+\xi''(|a|-|x_1|,|x_2|)} \pL{\left(\pR{x_1}^*(a)\right)} \pR{x_2}^*.
  \end{split}
\end{equation}

\begin{defin}[The twisted Heisenberg double] \label{def:h}
  Suppose the pair $(H^+,H^-)$ is compatible and we choose $\chi$ to satisfy~\eqref{eq:compatibility}.  We define the \emph{twisted Heisenberg double} $\fh(H^+,H^-)$ of $H^+$ as follows.  We set $\fh(H^+,H^-) = H^+ \otimes H^-$ as $\kk$-modules, and we write $a \# x$ for $a \otimes x$, $a \in H^+$, $x \in H^-$, viewed as an element of $\fh(H^+,H^-)$.  Multiplication is given by
  \begin{align}\label{eq:smash-product} \ts
    (a \# x)(b \# y) &:= \ts \sum_{(x)} c^{\gamma''(|b|,|x_2|) + \xi''(|b|-|x_1|,|x_2|)} a \pR{x_1}^*(b) \# x_2 y \\
    \notag &= \ts \sum_{(x),(b)} c^{\gamma''(|b|,|x_2|) + \xi''(|b|-|x_1|,|x_2|) + \gamma'(|b_1|,|b_2|)} \langle x_1, b_2 \rangle ab_1 \# x_2 y,
  \end{align}
  where the second equality follows from~\cite[Lem.~3.1]{RS14}.  We will often view $H^+$ and $H^-$ as subalgebras of $\fh(H^+,H^-)$ via the maps $a \mapsto a \# 1$ and $x \mapsto 1 \# x$ for $a \in H^+$ and $x \in H^-$.  Then we have $ax = a \# x$.  When the context is clear, we will simply write $\fh$ for $\fh(H^+,H^-)$.
\end{defin}

An element $v$ of an $\fh$-module $V$ is called a \emph{lowest weight} (resp.\ \emph{highest weight}) \emph{vacuum vector} if $\kk v \cong \kk$, as $\kk$-modules, and $H^-_\lambda v = 0$ (resp.\ $H^+_\lambda v = 0$) for all $\lambda \ne 0$.  The algebra $\fh$ has a natural (left) representation on $H^+$ given by
\[
  (a \# x)(b) = a \pR{x}^*(b),\quad a,b \in H^+,\ x \in H^-.
\]
We call this the \emph{lowest weight Fock space representation} of $\fh$ and denote it by $\cF = \cF(H^+,H^-)$.  Note that this representation is generated by the lowest weight vacuum vector $1 \in H^+$.  By~\cite[Th.~4.3(d)]{RS14}, the action of $\fh$ on $\cF$ is faithful.  Thus, we may view $\fh$ as the subalgebra of $\End_\kk H^+$ generated by $\pL{a}$, $a \in H^+$, and $\pR{x}^*$, $x \in H^-$.  If $X^+$ is a $\Lambda$-graded subalgebra of $H^+$ that is invariant under the left regular action of $H^-$ on $H^+$, then $X^+ \# H^-$ is a subalgebra of $\fh$ acting naturally on $X^+$.

%%%%%%%%%%%%%%%%%%%%%%%%%%%%%%%%%%%%%%%%%%%%%%%%%%%%
%
\section{Supermodules over graded superalgebras} \label{sec:modules}
%
%%%%%%%%%%%%%%%%%%%%%%%%%%%%%%%%%%%%%%%%%%%%%%%%%%%%

In this section, we collect some facts about categories of supermodules over graded superalgebras and the Grothendieck groups of these categories.  Recall that $\F$ is an arbitrary field of characteristic not equal to two.

%%%%%%%%%%%%%%%%%%%%%%%%%%%%%%%%%%%%%%%%%%%%%%%%%%%%%%%%%%%%%
\subsection{Supermodule categories and homomorphism spaces} \label{subsec:module-categories}
%%%%%%%%%%%%%%%%%%%%%%%%%%%%%%%%%%%%%%%%%%%%%%%%%%%%%%%%%%%%%

Let $B$ be a graded (more precisely, $\N$-graded) superalgebra over $\F$. By this we mean that
\[ \ts
  B \cong \bigoplus_{i\in\N,\, \epsilon\in\Z_2} B_{i,\epsilon} \quad \text{ with } \quad B_{i,\epsilon} B_{j,\tau} \subseteq B_{i+j,\epsilon+\tau}, \quad i,j\in\N,~\epsilon,\tau\in\Z_2.
\]
If $B$ is a graded superalgebra, a graded $B$-supermodule $M$ is analogously a $(\Z\times\Z_2)$-graded vector space over $\F$ such that
\[
  B_{i,\epsilon} M_{j,\tau} \subseteq M_{i+j,\epsilon+\tau} \quad \text{for all } i \in \N,\ j \in \Z,\ \epsilon,\tau \in \Z_2.
\]
(Throughout the paper, all (super)modules are left (super)modules unless otherwise specified.)  If $v$ is a homogeneous element in a $\Z$-graded (resp.\ $\Z_2$-graded) vector space, we will denote by $|v|$ (resp.\ $\bar v$) its degree.  Whenever we write an expression involving degrees of elements, we will implicitly assume that such elements are homogeneous.

For $M$, $N$ two $(\Z\times\Z_2)$-graded vector spaces over $\F$, we define a $(\Z\times\Z_2)$-grading on the space $\HOM_\F(M,N)$ of all $\F$-linear maps by setting $\HOM_\F(M,N)_{i,\epsilon}$ to be the subspace of all homogeneous maps of degree $(i,\epsilon)$. That is,
\[
  \HOM_\F(M,N)_{i,\epsilon} := \{\alpha \in \HOM_\F(M,N)\ |\  \alpha(M_{j,\tau}) \subseteq N_{i+j,\epsilon+\tau} \ \forall\ j \in \Z,\ \tau \in \Z_2\}.
\]
If $M$, $N$ are two graded $B$-supermodules, then we define the $\F$-vector space
\[
  \Hom_B(M,N) := \{f \in \HOM_\F(M,N)_{0,0}\ |\ f(bm) = b f(m) \ \forall\ b\in B,\ m\in M\}.
\]

For $B$ a finite-dimensional graded superalgebra, let $B\grm$ denote the category of finitely-generated graded left $B$-supermodules and let $B\grp$ denote the category of finitely-generated projective graded left $B$-supermodules.  In both categories, we take the morphisms from $M$ to $N$ to be $\Hom_B(M,N)$. For each $n\in\Z$, we have a degree shift functor
\[
  \{n\} \colon B\grm \to B\grm,\quad M\mapsto M\{n\}.
\]
Here $M\{n\}$ is the same underlying vector space as $M$, with the same $B$-action, but a new grading given by $M\{n\}_{i,\epsilon}=M_{i-n,\epsilon}$.  We have an analogous functor on the category of graded right $B$-supermodules, and hence on the category of graded $B$-superbimodules.  We also have a parity shift functor
\[
  \Pi \colon B\grm \to B\grm,\quad M \mapsto \Pi M,
\]
that switches the $\Z_2$-grading of the spaces, i.e.\ $(\Pi M)_{i,\epsilon}=M_{i,\epsilon+1}$. The action of $B$ on $\Pi M$ is given by $b\cdot m=(-1)^{\bar b}bm$, where $bm$ is the action on $M$.  The parity shift functor $\Pi$ for a right supermodule switches the $\Z_2$-grading as above but, unlike the case of left supermodules, it does not change the signs in the action.  For $n \in \Z$ and $\sigma \in \Z_2$, we define the functor
\[
  \{n,\sigma\} \colon B\grm \to B\grm,\quad M \mapsto M\{n,\sigma\} := \Pi^\sigma M\{n\}.
\]
Both the degree shift and parity shift functors leave morphisms unchanged.  Abusing notation, we will also sometimes use $\{n,\sigma\}$ to denote the map $M \to M\{n,\sigma\}$ that is the identity on elements of $M$.

For $M,N\in B\grm$, we also define the $(\Z\times\Z_2)$-graded $\F$-vector space
\begin{equation} \label{bighom} \ts
  \HOM_B(M,N) := \bigoplus_{n\in\Z, \epsilon \in \Z_2} \HOM_B(M,N)_{n, \epsilon},\quad \HOM_B(M,N)_{n, \epsilon} := \Hom_B(M, N\{-n,\epsilon\}).
\end{equation}
Note that we have an isomorphism of graded $\F$-vector spaces\pagebreak
\begin{align}
  \HOM_B(M\{i,\epsilon\},N\{j,\tau\}) &\cong \HOM_B(M,N)\{j-i,\tau+\epsilon\}, \label{eq:HOM-isom} \\
  f &\mapsto \{-i,\epsilon\} \circ f \circ \{i,\epsilon\}. \nonumber
\end{align}
and that, for $n \in \Z$, $\epsilon \in \Z_2$,
\[
  \HOM_B(M,N)_{n,\epsilon} = \{f \in \HOM_\F(M,N)_{n,\epsilon}\ |\ f(bm)=(-1)^{\epsilon \bar b} b f(m) \ \forall\ b\in B,\ m\in M\}.
\]

If $M$ is a graded right $B$-supermodule, we sometimes will use the notation $\pR{b}(m)=mb$ for the operator of right multiplication.  We will also need another family of operators involving a sign.  For each homogeneous $b \in B$, we define an $\F$-linear operator
\begin{equation} \label{eq:sign-right-action}
  \pr{b} \colon M\to M,\quad \pr{b}(m):=(-1)^{\bar b \bar m} mb \quad \text{for all homogeneous } m \in M.
\end{equation}
Notice that, if $\bar b=0$, then $\pr{b} = \pR{b}$.

If $M$ is a graded $(B_1,B_2)$-superbimodule for graded superalgebras $B_1$, $B_2$, and $N$ is a graded $B_1$-supermodule, then $\HOM_{B_1}(M,N)$ is a graded left $B_2$-supermodule via the action
\begin{equation} \label{eq:left-hom-action}
  b \cdot f = (-1)^{\bar b \bar f} f \circ \pr{b},\quad b \in B_2,\ f \in \HOM_{B_1}(M,N),
\end{equation}
and $\HOM_{B_1}(N,M)$ is a graded right $B_2$-supermodule via the action
\begin{equation} \label{eq:right-hom-action}
  f \cdot b = (-1)^{\bar b \bar f} (\pr{b}) \circ f,\quad b \in B_2,\ f \in \HOM_{B_1}(N,M).
\end{equation}
It is routine to verify that~\eqref{eq:HOM-isom} is an isomorphism of graded left $B_2$-supermodules and graded right $B_2$-supermodules under the actions~\eqref{eq:left-hom-action} and~\eqref{eq:right-hom-action}, respectively (when $M$ and $N$ have the appropriate structure).
\details{
  In the setup of~\eqref{eq:left-hom-action}, we have
  \[
    \{-i,\epsilon\} \circ (b \cdot f) \circ \{i,\epsilon\} = \{-i,\epsilon\} \circ ((-1)^{\bar b \bar f} f \circ \pr{b}) \circ \{i,\epsilon\} = (-1)^{\bar b \bar f} \{-i,\epsilon\} \circ f \circ \{i, \epsilon\} \circ \pr{b}.
  \]
  In the setup of~\eqref{eq:right-hom-action}, we have
  \[
    \{-i,\epsilon\} \circ (f \cdot b) \circ\{i,\epsilon\} = \{-i,\epsilon\} \circ ((-1)^{\bar b \bar f} (\pr{b}) \circ f) \circ \{i,\epsilon\} = (-1)^{\bar b \bar f} (\pr{b}) \circ \{-i,\epsilon\} \circ f \circ \{i,\epsilon\}.
  \]
}

If $M$ is a graded right $B_2$-supermodule, then it is also a graded $(\F,B_2)$-superbimodule and so the dual supermodule
\[
  M^\vee := \HOM_\F(M,\F)
\]
is a graded left $B_2$-supermodule via the action~\eqref{eq:left-hom-action}. If $M$ is a graded left $B_1$-supermodule, then $M^\vee$ is a graded right $B_1$-supermodule via the action
\begin{equation} \label{eq:left-module-dual-right-action}
  f \cdot b = f \circ \pL{b},\quad f \in M^\vee,\ b \in B_1.
\end{equation}
Thus, if $M$ is a graded $(B_1,B_2)$-superbimodule, then $M^\vee$ is a graded $(B_2,B_1)$-superbimodule with action given, for homogeneous $b_1 \in B_1$, $b_2 \in B_2$, $f \in M^\vee$, by
\begin{equation} \label{eq:BA-hom-bimod}
  b_2 \cdot f \cdot b_1 = (-1)^{\bar b_2 \bar f} f \circ \pr{b_2} \circ \pL{b_1}=(-1)^{\bar b_2 \bar f + \bar b_2 \bar b_1}f \circ \pL{b_1} \circ \pr{b_2} .
\end{equation}
Notice that, with this definition, $M^\vee$ is a $(B_2,B_1)$-bimodule since $((b_2 \cdot f) \cdot b_1)=(b_2 \cdot (f \cdot b_1))$.  The difference in signs according to the order only appears when we want to express the formula in terms of the operators $\pL{b_1}$ and $\pr{b_2}$, which do not commute.

Suppose $M$ is a graded left $B$-supermodule, $N$ is a graded right $B$-supermodule, and $\tau$ is a graded superalgebra automorphism of $B$.  Then we can define the twisted graded left $B$-supermodule $^\tau M$ and twisted graded right $B$-supermodule $N^\tau$, with actions given by
\begin{gather}
  \label{eq:left-twist} b \cdot m \mapsto \tau(b) m,\quad b \in B,\ m \in M, \\
  \label{eq:right-twist} n \cdot b \mapsto n \tau(b),\quad b \in B,\ n \in N,
\end{gather}
where juxtaposition denotes the original action of $B$ on $M$ and $N$.

If $A$ and $B$ are graded superalgebras, then their tensor product $A \otimes B$ is also a graded superalgebra, with product
\[
  (a_1 \otimes b_1)(a_2 \otimes b_2)=(-1)^{\bar b_1 \bar a_2} a_1a_2\otimes b_1b_2, \quad a_1,\ a_2\in A,\ b_1,\ b_2\in B.
\]
Notice that there is an isomorphism $A\otimes B\cong B\otimes A$ of graded superalgebras defined by
\[
  a\otimes b\mapsto (-1)^{\bar a \bar b}b\otimes a, \quad a\in A,\ b\in B.
\]
Given $M\in A\grm$ and $N\in B\grm$, we define the outer tensor product $M\boxtimes N \in (A \otimes B)\grm$ to be the vector space $M\otimes N$, with the action defined by
\[
  (a\otimes b)\cdot (m\otimes n)=(-1)^{\bar b \bar m} am \otimes bn, \quad a\in A,\ b\in B,\ m\in M,\ n\in N.
\]
The outer tensor product of morphisms is defined as follows.  For $M_1,M_2 \in A\grm$, $N_1,N_2 \in B\grm$, $f \in \HOM_A(M_1,M_2)$, and $g \in \HOM_B(N_1,N_2)$, we define
\begin{gather*}
  f \boxtimes g \in \HOM_{A \otimes B}(M_1 \boxtimes N_1, M_2 \boxtimes N_2), \\
  (f \boxtimes g)(m \otimes n) = (-1)^{\bar g \bar m} f(m) \otimes g(n),\quad m \in M_1,\ n \in N_1.
\end{gather*}
\details{
  We have
  \begin{align*}
    (f \boxtimes g)((a \otimes b) \cdot (m \otimes n)) &= (f \boxtimes g)\left( (-1)^{\bar b \bar m} (a \cdot m) \otimes (b \cdot n) \right) \\
    &= (-1)^{\bar b \bar m + \bar g \bar a + \bar g \bar m} f(a \cdot m) \otimes g (b \cdot n) \\
    &= (-1)^{\bar b \bar m + \bar g \bar a + \bar g \bar m + \bar f \bar a + \bar g \bar b} (a \cdot f(m)) \otimes (b \cdot g(n)) \\
    &= (-1)^{\bar f \bar b + \bar g \bar a + \bar g \bar m + \bar f \bar a + \bar g \bar b} (a \otimes b) \cdot (f(m) \otimes g(n)) \\
    &= (-1)^{(\bar f + \bar g)(\bar a + \bar b)} (a \otimes b) \cdot \left( (f \boxtimes g)(m \otimes n) \right)
  \end{align*}
  Thus, $f \boxtimes g \in \HOM_{A \otimes B}(M_1 \boxtimes N_1, M_2 \boxtimes N_2)$.
}
We have an isomorphism of $(\Z \otimes \Z_2)$-graded $\F$-vector spaces
\begin{equation} \label{eq:outer-tensor-hom-isom}
  \HOM_A(M_1,M_2) \otimes_\F \HOM_B(N_1,N_2) \cong \HOM_{A \otimes B}(M_1 \boxtimes N_1, M_2 \boxtimes N_2).
\end{equation}

%%%%%%%%%%%%%%%%%%%%%%%%%%%%%%%%
\subsection{Grothendieck groups} \label{subsec:grothendieck}
%%%%%%%%%%%%%%%%%%%%%%%%%%%%%%%%

Suppose that $B$ is a finite-dimensional graded superalgebra.  We define the Grothendieck groups
\[
  G'_0(B) = \cK_0(B\grm) \quad \text{and} \quad K'_0(B) = \cK_0(B\grp).
\]
Here $\cK_0(B\grm)$ is defined to be the quotient of the free $\Z$-module with generators corresponding to finitely-generated graded $B$-supermodules, by the $\Z$-submodule generated by $M_1-M_2+M_3$ for all short exact sequences $0\to M_1\to M_2\to M_3\to 0$  in $B\grm$.  In the same way, we define $\cK_0(B\grp)$, except we only consider projective objects (note that, in this case, we are in fact considering the split Grothendieck group, since all short exact sequences in $B\grp$ split).  If $\mathcal{C}$ is a category, we denote the class of an object $M \in \mathcal{C}$ in $\cK_0(\mathcal{C})$ by $[M]$.

Let
\[
  \Z_{q,\pi} := \Z[q,q^{-1},\pi]/(\pi^2-1).
\]
We define actions of $\Z_{q,\pi}$ on $G'_0(B)$ and $K'_0(B)$ by setting
\begin{gather*}
  \pi[M] = [\Pi M], \quad q^n[M] := [M\{n\}],\quad \text{for all } M \in B\grm,\ n \in \Z, \\
  \pi[M] = [\Pi M], \quad q^n[M] := [M\{-n\}],\quad \text{for all } M \in B\grp,\ n \in \Z.
\end{gather*}
By~\eqref{eq:HOM-isom}, there is a natural $\Z_{q,\pi}$-bilinear map
\begin{equation} \label{eq:KG'-innerprod}
  \langle -, - \rangle \colon K'_0(B) \otimes G'_0(B) \to \Z_{q,\pi},\quad \langle [P], [M] \rangle =\grdim_\F \HOM_B(P,M),
\end{equation}
where, for a $(\Z \times \Z_2)$-graded vector space $V=\bigoplus_{n\in\Z,\, \epsilon \in \Z_2} V_{n,\epsilon}$ over $\F$, we define the graded dimension
\[ \ts
  \grdim_\F V:=\sum_{n \in \Z,\, \epsilon \in \Z_2} q^n \pi^\epsilon \dim_\F V_{n,\epsilon} \in \Z_{q,\pi}.
\]
Note that, in some places in the literature, one defines $q^n[M] = [M\{n\}]$ for $M \in B\grp$, $n \in \Z$, so that the form \eqref{eq:KG'-innerprod} is $q$-sesquilinear.  We choose a different convention because it simplifies certain arguments and notation to follow.

Any simple graded $B$-supermodule is concentrated in one degree for the $\Z$-grading (recall that $B$ is $\N$-graded), and so, up to shift, it is isomorphic to an ungraded simple supermodule for the superalgebra $\tilde{B}_0:=B_{0,0}\oplus B_{0,1}$. Let $V_1,\dotsc,V_s$ be a complete list of nonisomorphic simple $\tilde{B}_0$-supermodules.  If $P_i$ is the projective cover, in the category $B\grm$, of $V_i$ for $i=1,\dotsc,s$, then $P_1,\dotsc,P_s$ is a complete list of nonisomorphic indecomposable projective graded $B$-supermodules, up to shift.
\details{First of all, every $V_i$ has a projective cover in $B\grm$. To obtain it, let $\bar{P}_i$ be the projective cover of $V_i$ in the category of finitely-generated supermodules for $\tilde{B}_0$ (see \cite[Prop.~12.2.12]{Kle05}), then let $P_i:=B\otimes_{\tilde{B}_0}\bar{P}_i$. It is not difficult to check that $P_i$ is the required projective cover.  Now, each $P\in B\grp$  decomposes as a finite direct sum of indecomposable projective supermodules (a direct summand of a projective is projective). An indecomposable projective supermodule $P$ has a unique simple quotient $V$, which sits in some degree, and $P$ is the projective cover of $V$.  Hence it has to be one of the $P_i$, up to degree shift.}

If $\F$ is algebraically closed, then, for all $1 \le i,j \le s$,
\begin{equation} \label{eq:pairing-simple-proj}
  \langle [P_i], [V_j] \rangle =
  \begin{cases}
    \delta_{ij} & \text{ if }V_j \text{ is of type } \mathsf{M}, \\
    (1 + \pi)\delta_{ij} & \text{ if }V_j\text{ is of type } \mathsf{Q}.
  \end{cases}
\end{equation}
(For this result and the definition of types $\mathsf{M}$ and $\mathsf{Q}$, see \cite[Lem.~12.2.3]{Kle05}.)  If $V_1,\ldots, V_r$ are irreducibles of type $\mathsf{M}$ and $V_{r+1},\ldots, V_s$ are irreducibles of type $\mathsf{Q}$, we have isomorphisms of $\Z_{q,\pi}$-modules
\begin{gather*} \ts
  G'_0(B) \cong \bigoplus_{i=1}^r \Z_{q,\pi}[V_i]\oplus \bigoplus_{j=r+1}^s \left(\Z_{q,\pi}/(\pi-1)\right) [V_j], \\ \ts
  K'_0(B) \cong \bigoplus_{i=1}^r \Z_{q,\pi}[P_i]\oplus \bigoplus_{j=r+1}^s \left(\Z_{q,\pi}/(\pi-1)\right) [P_j].
\end{gather*}

Note that, if $B$ has graded left supermodules of type $\mathsf{Q}$, then the bilinear form~\eqref{eq:KG'-innerprod} is degenerate, since $1+\pi$ is a zero divisor.  Furthermore, $K'_0(B)$ and $G'_0(B)$ are not free $\Z_{q,\pi}$-modules.  We remedy this situation as follows.  Let
\[
  \kk =
  \begin{cases}
    \Z_{q,\pi} & \text{if all simple graded left $B$-supermodules are of type $\mathsf{M}$}, \\
    \Z[\frac{1}{2},q,q^{-1}] & \text{if $B$ has a simple graded left supermodule of type $\mathsf{Q}$}.
  \end{cases}
\]
We will identify $\Z[\frac{1}{2},q,q^{-1}]$ with $\Z_{q,\pi}[\frac{1}{2}]/(\pi-1)$.  Thus, $\Z[\frac{1}{2},q,q^{-1}]$ is naturally a $\Z_{q,\pi}$-module and we have $\pi=1$ in $\Z[\frac{1}{2},q,q^{-1}]$.  Define
\[
  G_0(B) = G'_0(B) \otimes_{\Z_{q,\pi}} \kk,\quad K_0(B) = K'_0(B) \otimes_{\Z_{q,\pi}} \kk.
\]
Then~\eqref{eq:KG'-innerprod} induces a pairing
\begin{equation} \label{eq:KG-innerprod}
  \langle -, - \rangle \colon K_0(B) \otimes G_0(B) \to \kk.
\end{equation}
If $\F$ is algebraically closed, we have
\[ \ts
  G_0(B) \cong \bigoplus_{i=1}^s \kk[V_i], \quad K_0(B) \cong \bigoplus_{i=1}^s \kk[P_i],
\]
and the pairing~\eqref{eq:KG-innerprod} is perfect.

Suppose $\varphi \colon B \to A$ is a graded superalgebra homomorphism, possibly not preserving the identity element. In particular $\varphi \in \HOM_\F(B,A)_{0,0}$ because $\varphi(1_B)\in A_{0,0}$ is an idempotent.  Then we can consider $A$ as a graded left $B$-supermodule via the action $b \cdot a = \varphi(b) a$.  Similarly, we can consider $A$ as a right $B$-supermodule.  Then we have \emph{induction} and \emph{restriction} functors
\begin{gather*}
  \Ind^A_B \colon B\grm \to A\grm, \qquad  \Res^A_B \colon A\grm \to B\grm,\\
  \Ind^A_B N := A\varphi(1_B) \otimes_B N,\quad N \in B\grm, \\
 \Res^A_B M := \HOM_A(A\varphi(1_B),M) \cong {_B\varphi(1_B)A} \otimes_A M,\quad M \in A\grm,
\end{gather*}
where $_B\varphi(1_B)A$ denotes $\varphi(1_B)A$ considered as a $(B,A)$-superbimodule and the left $B$-action on $\HOM_A(A\varphi(1_B),M)$ is given by using the right action of $B$ on $A$, as in \eqref{eq:left-hom-action}.  The isomorphism above is given by the map $f \mapsto \varphi(1_B) \otimes f(\varphi(1_B))$ for $f \in \HOM_A(A\varphi(1_B),M)$.  This isomorphism is natural in $M$, and so we have an isomorphism of functors $\Res^A_B \cong {_B\varphi(1_B)A} \otimes_A -$.  It follows from the standard tensor-hom adjunction that $\Ind_B^A$ is left adjoint to $\Res_B^A$.

%%%%%%%%%%%%%%%%%%%%%%%%%%%%%%%%%%%%%%%%%%%%%%%%%%%%
%
\section{Towers of graded superalgebras} \label{sec:towers}
%
%%%%%%%%%%%%%%%%%%%%%%%%%%%%%%%%%%%%%%%%%%%%%%%%%%%%

In this section, we define towers of graded superalgebras and various properties they may possess.  These towers will be the main ingredient in our categorification.  Recall that $\F$ is an arbitrary field of characteristic not equal to two.

\begin{defin}[Tower of graded superalgebras] \label{def:tower}
  Let $\Lambda\cong \N^r$ be as in the beginning of Section \ref{sec:general-construction}, and let $A = \bigoplus_{\lambda \in \Lambda} A_\lambda$ be a $\Lambda$-graded superalgebra over $\F$ with multiplication $\rho \colon A \otimes A \to A$.  Define
  \[
    \kk =
      \begin{cases}
        \Z_{q,\pi} & \text{if, for all $\lambda \in \Lambda$, all simple graded left $A_\lambda$-supermodules are of type $\mathsf{M}$}, \\
        \Z[\frac{1}{2},q,q^{-1}] & \text{if, for some $\lambda \in \Lambda$, $A_\lambda$ has a simple graded left supermodule of type $\mathsf{Q}$}.
      \end{cases}
  \]
  Then $A$ is called a \emph{tower of graded superalgebras} if the following conditions are satisfied.
  \begin{description}[style=multiline]
    \item[\namedlabel{item:TA1}{TA1}] Each graded piece $A_\lambda$, $\lambda \in \Lambda$, is a finite-dimensional $\N$-graded superalgebra as in Section~\ref{subsec:module-categories} (with a different multiplication than that of $A$) with a unit $1_\lambda$.  We have $A_0 \cong \F$.
    \item[\namedlabel{item:TA2}{TA2}] The external multiplication $\rho_{m,n} \colon A_\lambda \otimes A_\mu \to A_{\lambda+\mu}$ is a homomorphism of graded superalgebras for all $\lambda,\mu \in \Lambda$ (possibly not sending $1_\lambda \otimes 1_\mu$ to $1_{\lambda+\mu}$).
    \item[\namedlabel{item:TA3}{TA3}] We have that $A_{\lambda+\mu}$ is a two-sided projective $(A_\lambda \otimes A_\mu)$-supermodule with the action defined by
        \[
          a \cdot (b \otimes c) = a \rho_{\lambda,\mu}(b \otimes c) \quad \text{and} \quad (b \otimes c) \cdot a = \rho_{\lambda,\mu}(b \otimes c) a,
        \]
        for all $\lambda,\mu \in \Lambda$, $a \in A_{\lambda+\mu}$, $b \in A_\lambda$, $c \in A_\mu$.
    \item[\namedlabel{item:TA4}{TA4}] For each $\lambda \in \Lambda$, the pairing~\eqref{eq:KG-innerprod}, with $B=A_\lambda$, is perfect. (Note that this condition is automatically satisfied if $\F$ is algebraically closed.)
  \end{description}
\end{defin}

For the remainder of this section we assume that $A$ is a tower of graded superalgebras.  Let
\begin{equation} \label{eq:G0A-def} \ts
  \cG(A) = \bigoplus_{\lambda \in \Lambda} G_0(A_\lambda) \quad \text{and} \quad \cK(A) = \bigoplus_{\lambda \in \Lambda} K_0(A_\lambda).
\end{equation}
Then we have a perfect pairing $\langle -, - \rangle \colon \cK(A) \times \cG(A) \to \kk$ given by
\begin{equation} \label{eq:tower-pairing}
  \langle [P], [M] \rangle =
  \begin{cases}
    \qdim_\F \HOM_{A_{\lambda}}(P,M) & \text{if } P \in A_\lambda\grp \text{ and } M \in A_\lambda\grm \\ & \quad \text{for some } \lambda \in \Lambda, \\
      0 & \text{otherwise}.
  \end{cases}
\end{equation}
We also define a perfect pairing $\langle -, - \rangle \colon (\cK(A) \otimes \cK(A)) \times (\cG(A) \otimes \cG(A)) \to \kk$ by
\[
  \langle [P] \boxtimes [Q], [M] \boxtimes [N] \rangle =
  \begin{cases}
    \qdim_\F (\HOM_{A_\lambda \otimes A_\mu}(P \boxtimes Q, M \boxtimes N)) \quad  \text{ if } P \in A_\lambda\grp, Q \in A_\mu\grp \\ \qquad \text{ and } M \in A_\lambda\grm,  N \in A_\mu\grm  \text{ for some } \lambda,\mu \in \Lambda, \\
    0 \qquad \text{ otherwise}.
  \end{cases}
\]
It follows from~\eqref{eq:outer-tensor-hom-isom} that, for all $P,Q,M,N$,
\begin{equation} \ts
  \langle [P] \boxtimes [Q], [M] \boxtimes [N] \rangle = \langle [P], [M] \rangle \langle [Q], [N] \rangle.
\end{equation}

Consider the direct sums of categories
\[ \ts
  A\grm := \bigoplus_{\lambda \in \Lambda} A_\lambda\grm,\quad A\grp := \bigoplus_{\lambda \in \Lambda} A_\lambda\grp.
\]
For $r \in \N_+$, we define
\begin{gather*} \ts
  A\grm^{\otimes r} := \bigoplus_{\lambda_1,\dotsc,\lambda_r \in \Lambda} (A_{\lambda_1} \otimes \dotsb \otimes A_{\lambda_r})\grm,\\ \ts
  A\grp^{\otimes r} := \bigoplus_{\lambda_1,\dotsc,\lambda_r \in \Lambda} (A_{\lambda_1} \otimes \dotsb \otimes A_{\lambda_r})\grp.
\end{gather*}
Then, for $i,j \in \{1,\dotsc,r\}$, $i < j$, we define $S_{ij} \colon A\grm^{\otimes r} \to A\grm^{\otimes r}$ to be the endofunctor that interchanges the $i$th and $j$th factors, that is, the endofunctor arising from the isomorphism
\[
  A_{\lambda_1} \otimes \dotsb \otimes A_{\lambda_r} \cong A_{\lambda_1} \otimes \dotsb \otimes A_{\lambda_{i-1}} \otimes A_{\lambda_j} \otimes A_{\lambda_{i+1}} \otimes \dotsb \otimes A_{\lambda_{j-1}} \otimes A_{\lambda_i} \otimes A_{\lambda_{j+1}} \otimes \dotsb \otimes  A_{\lambda_r}
\]
that maps the element $a_1 \otimes \dotsb \otimes a_r$ to
\[
   (-1)^{(\bar a_i + \bar a_j)(\bar a_{i+1} + \dotsb + \bar a_{j-1}) + \bar a_i \bar a_j} a_1 \otimes \dotsb \otimes a_{i-1} \otimes a_j \otimes a_{i+1} \otimes \dotsb \otimes a_{j-1} \otimes a_i \otimes a_{j+1} \otimes \dotsb \otimes a_{r}.
\]
We use the same notation to denote the analogous endofunctor on $A\grp^{\otimes r}$.

We also have the following functors:
\begin{equation} \label{eq:Groth-operations}
  \begin{split}
  \nabla \colon A\grm^{\otimes 2} \to A\grm,\quad \nabla|_{(A_\lambda \otimes A_\mu)\grm} = \Ind^{A_{\lambda + \mu}}_{A_\lambda \otimes A_\mu}, \\ \ts
  \Delta \colon A\grm \to A\grm^{\otimes 2}, \quad \Delta|_{A_\lambda\grm} = \bigoplus_{\mu+\nu =\lambda } \Res^{A_\lambda}_{A_\mu \otimes A_\nu}, \\
  \eta \colon \Vect \to A\grm,\quad \eta(V) = V \in A_0\grm \text{ for } V \in \Vect, \\
  \varepsilon \colon A\grm \to \Vect,\quad \varepsilon(V) =
  \begin{cases}
    V & \text{if } V \in A_0\grm, \\
    0 & \text{otherwise}.
  \end{cases}
  \end{split}
\end{equation}
In the above, we have identified $A_0\grm$ with the category $\Vect$ of finite-dimensional $(\Z \times \Z_2)$-graded vector spaces over $\F$.  Replacing $A\grm$ by $A\grp$ above, we also have the functors $\nabla$, $\Delta$, $\eta$ and $\varepsilon$ on $A\grp$. Since the above functors are all exact (we use axiom~\ref{item:TA3} here), they induce a multiplication, comultiplication, unit and counit on $\cG(A)$ and $\cK(A)$.  We use the same notation to denote these induced maps.

Since induction is left adjoint to restriction, $\nabla$ is left adjoint to $\Delta$.  However, there are examples of towers of graded superalgebras for which induction is not \emph{right} adjoint to restriction (see, for example, Section~\ref{sec:Weyl}).  Nevertheless, we often have something quite close to this property.  Any graded superalgebra automorphism $\psi_\lambda$ of $A_\lambda$ induces an isomorphism of categories $\Psi_\lambda \colon A_\lambda\grm \to A_\lambda\grm$ (which restricts to an isomorphism of categories $\Psi_\lambda \colon A_\lambda\grp \to A_\lambda\grp$) by twisting the $A_\lambda$-action as in~\eqref{eq:left-twist}.  Then $\Psi := \bigoplus_{\lambda \in \Lambda} \Psi_\lambda$ is an automorphism of the categories $A\grm$ and $A\grp$.  It induces automorphisms, which we also denote by $\Psi$, of $\cG(A)$ and $\cK(A)$.  Similarly, if $\delta = (\delta_\lambda)_{\lambda \in \Lambda} \in \Z^\Lambda$, and $\sigma = (\sigma_\lambda)_{\lambda \in \Lambda} \in \Z_2^\Lambda$, then $\{\delta,\sigma\} = \bigoplus_\lambda \{\delta_\lambda, \sigma_\lambda\}$ defines a grading shift automorphism of the categories $A\grm$ and $A\grp$.

\begin{defin}[Conjugate shifted adjointness] \label{def:twisted-adjoint}
  Suppose that, for $\lambda \in \Lambda$, $\psi_\lambda$ is an automorphism of graded superalgebras $A_\lambda \to A_\lambda$, $\delta_\lambda \in \Z$, and $\sigma_\lambda \in \Z_2$, and define $\Psi$ and $\{\delta,\sigma\}$ as above.  Then, given a tower of graded superalgebras $A$, we say that induction is \emph{conjugate shifted right adjoint} to restriction (and restriction is \emph{conjugate shifted left adjoint} to induction) with \emph{conjugation} $\Psi$ and \emph{shift} $\{\delta,\sigma\}$ if $\nabla$ is right adjoint to $\Psi^{\otimes 2} \{\delta^{\otimes 2}, \sigma^{\otimes 2}\} \Delta \Psi^{-1} \{-\delta,-\sigma\}$.
\end{defin}

There are some situations in which conjugate shifted adjointness is automatically satisfied.  For example, in Section~\ref{sec:frob-gr-alg}, we will see that this is the case for towers of Frobenius graded superalgebras.

\begin{defin}[Strong tower of graded superalgebras] \label{def:strong}
  Suppose $\chi = (\chi',\chi'')$ is a pair of biadditive maps $\chi',\chi'' \colon \Lambda \times \Lambda \to \Z$, $d \in \Z$, $\epsilon \in \Z_2$, $\delta = (\delta_\lambda)_{\lambda \in \Lambda} \in \Z^\Lambda$, and $\sigma = (\sigma_\lambda)_{\lambda \in \Lambda} \in \Z_2^\Lambda$.  We say that a tower of graded superalgebras $A$ is \emph{strong} with \emph{twist} $(\chi,d,\epsilon)$, \emph{conjugation} $\Psi$ and \emph{shift} $(\delta,\sigma)$ if the following two conditions are satisfied:
  \begin{description}[style=multiline, labelwidth=0.6cm]
    \item[\namedlabel{item:S1}{S1}] Induction is conjugate shifted right adjoint to restriction with conjugation $\Psi$ and shift $\{\delta,\sigma\}$, and there exists a biadditive map $\kappa \colon \Lambda \times \Lambda \to \Z$ such that the maps
      \[
        \kappa_\delta, \kappa_\sigma \colon \Lambda \times \Lambda \to \Z,\quad \kappa_\alpha(\lambda,\mu)=\alpha_{\lambda+\mu}-\alpha_\lambda-\alpha_\mu\quad \text{for } \alpha=\delta,\sigma,
      \]
      are of the form $\kappa_\delta=d\kappa$, $\kappa_\sigma=\epsilon\kappa$.

    \item[\namedlabel{item:S2}{S2}] We have an isomorphism of functors
      \begin{equation} \label{eq:strong-isom} \ts
        \Delta \nabla \cong \nabla^{\otimes 2} S_{23}\{ \chi \} \Delta^{\otimes 2},
      \end{equation}
      where $\{\chi\}$ is the endofunctor of $A\grm^{\otimes 4}$ given on $(A_{\lambda_1}\otimes A_{\lambda_2}\otimes A_{\lambda_3}\otimes A_{\lambda_4})\grm$ by $\{\chi\}(M_1\otimes M_2\otimes M_3\otimes M_4) = (M_1 \otimes M_2 \otimes M_3 \otimes M_4) \{d \chi'(\lambda_2, \lambda_3) + d\chi''(\lambda_1,\lambda_4), \epsilon \chi'(\lambda_2, \lambda_3) + \epsilon \chi''(\lambda_1,\lambda_4)\}$.
  \end{description}
  We say that the tower has \emph{trivial twist} if $\chi=0$ or $d=\epsilon=0$.
\end{defin}

\begin{defin}[Dualizing and compatible towers of graded superalgebras]
  We say that a tower of graded superalgebras $A$ is \emph{dualizing} if, under the operations~\eqref{eq:Groth-operations}, $\cK(A)$ and $\cG(A)$ are twisted Hopf algebras which are twisted dual, in the sense of Definition~\ref{def:dual-pair}, under the bilinear form~\eqref{eq:tower-pairing} (i.e.\ the perfect pairing~\eqref{eq:tower-pairing} is a twisted Hopf pairing). In addition, we say that $A$ is \emph{compatible} if the dual pair $(\cK(A),\cG(A))$ is compatible, as in Definition \ref{def:compatible-pair}.
\end{defin}

To simplify terminology, from now on we will just use the terms `strong tower', `dualizing tower' and `compatible tower', and will consider implied the specifier `of graded superalgebras'.  In the case that $\Lambda = \N$ and each $A_n$ lies in degree $(0,0)$, the following result was proved in~\cite[Prop.~3.7]{SY13}.

\begin{prop} \label{prop:dualizing-twisting-isom}
  Suppose $A$ is a strong tower with conjugation $\Psi$, twist $(\chi,d,\epsilon)$, and shift $\{\delta,\sigma\}$.  Then the following statements are equivalent.
  \begin{enumerate}
    \item \label{item:dual-twist} The tower $A$ is dualizing with a $(q^d\pi^\epsilon,0,\kappa)$-twisted Hopf pairing, where $\kappa$ is as in Definition \ref{def:strong}.

    \item \label{eq:twisted-coproduct-identity} We have $\Psi^{\otimes 2} \Delta \Psi^{-1} (P) \cong \Delta(P)$ for all $P \in A\grp$.

    \item \label{eq:twisted-coproduct-identity-classes} We have $\Psi^{\otimes 2} \Delta \Psi^{-1} = \Delta$ as maps $\cK(A) \to \cK(A) \otimes \cK(A)$.
  \end{enumerate}
  In particular, $A$ is dualizing if $\Psi = \id$ or, more generally, if $\Psi$ acts trivially on $\cK(A)$.
\end{prop}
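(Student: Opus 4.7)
The plan is to establish the cycle (b) $\Rightarrow$ (c) $\Rightarrow$ (a) $\Rightarrow$ (c) $\Rightarrow$ (b). The first implication is tautological, since an isomorphism of projective graded supermodules descends to equality of classes in the Grothendieck group. The heart of the matter is the equivalence (a) $\Leftrightarrow$ (c), which I would prove by combining the conjugate shifted right adjointness of \ref{item:S1} with perfectness of the pairing \eqref{eq:tower-pairing}. The closing implication (c) $\Rightarrow$ (b) then reduces to Krull--Schmidt in $A\grp^{\otimes 2}$.

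For (c) $\Rightarrow$ (a), the goal is to verify that \eqref{eq:tower-pairing} is a $(c, 0, \kappa)$-twisted Hopf pairing with $c = q^d \pi^\epsilon$. Axiom \ref{item:S2} already promotes $\cG(A)$ to a $(c,\chi)$-Hopf algebra (via the strong-tower isomorphism $\Delta \nabla \cong \nabla^{\otimes 2} S_{23} \{\chi\} \Delta^{\otimes 2}$), and Lemma \ref{lem:dual-twisted-Hopf} will then endow $\cK(A)$ with its dual twisted Hopf algebra structure. The unit and counit axioms of the pairing reduce to easy observations about $A_0 \cong \F$. The first pairing identity $\langle [P_1][P_2], [M]\rangle = \langle [P_1] \boxtimes [P_2], \Delta[M]\rangle$ (with $\gamma' = 0$) follows from the ordinary induction-restriction adjunction applied to $\nabla = \Ind$ and $\Delta = \Res$. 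For the second identity, take $P \in A_{\lambda + \mu}\grp$ and $M_i \in A_{\lambda_i}\grm$ with $\lambda_1 + \lambda_2 = \lambda + \mu$, and apply \ref{item:S1} to rewrite $\HOM_{A_{\lambda + \mu}}(P, \Ind(M_1 \boxtimes M_2))$ as $\HOM$ from a twist and shift of $\Delta(P)$. Using the relations $\kappa_\delta = d \kappa$ and $\kappa_\sigma = \epsilon \kappa$, the combined shift on the first argument is $\{-d\kappa(\lambda_1, \lambda_2), -\epsilon\kappa(\lambda_1, \lambda_2)\}$, which upon taking graded dimensions contributes precisely the factor $c^{\kappa(\lambda_1, \lambda_2)}$. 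Assumption (c) lets me swap $\Psi^{\otimes 2} \Delta \Psi^{-1}$ for $\Delta$ at the Grothendieck level, yielding the desired $\langle [P], [M_1][M_2]\rangle = c^{\kappa(\lambda_1, \lambda_2)} \langle \Delta[P], [M_1] \boxtimes [M_2]\rangle$.

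For (a) $\Rightarrow$ (c), I would rerun the same adjointness computation without assuming (c): this produces $\langle [P], [M_1][M_2]\rangle = c^{\kappa(\lambda_1, \lambda_2)} \langle \Psi^{\otimes 2} \Delta \Psi^{-1}[P], [M_1] \boxtimes [M_2]\rangle$, and matching against the twisted Hopf pairing identity supplied by (a), combined with perfectness of \eqref{eq:tower-pairing} on the tensor square, forces $\Psi^{\otimes 2} \Delta \Psi^{-1}[P] = \Delta[P]$. Finally, (c) $\Rightarrow$ (b) follows from Krull--Schmidt: both $\Delta(P)$ and $\Psi^{\otimes 2} \Delta \Psi^{-1}(P)$ are finitely-generated projective graded supermodules (by \ref{item:TA3} and since $\Psi$ restricts to an equivalence on $A\grp$), the indecomposable projectives yield a basis of the appropriate Grothendieck group, so equality of classes forces the desired isomorphism. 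The chief technical obstacle will be tracking the $\Z$-grading and $\Z_2$-grading shifts through the conjugate shifted adjointness carefully enough that the factor $c^{\kappa(\lambda_1, \lambda_2)}$ emerges with precisely the correct exponents.
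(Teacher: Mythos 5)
Your route is essentially the paper's: the core computation (unwinding the conjugate shifted adjointness of \ref{item:S1} to produce the $c^{\kappa(\lambda,\mu)}$ factor, with $c = q^d\pi^\epsilon$, and then invoking perfectness of \eqref{eq:tower-pairing}) is identical to what appears in the paper, and you track the sign conventions on the degree shifts correctly. The only structural difference is that you organize the cycle as (b) $\Rightarrow$ (c) $\Rightarrow$ (a) $\Rightarrow$ (c) $\Rightarrow$ (b) while the paper proves (b) $\Rightarrow$ (a), (a) $\Rightarrow$ (c), and (b) $\Leftrightarrow$ (c) by Krull--Schmidt; these are interchangeable since the Hopf-pairing computation only uses (b) or (c) to replace $\Psi^{\otimes 2}\Delta\Psi^{-1}$ by $\Delta$ at the level of classes in $\cK(A)$.

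There is one small circularity you should repair. You propose to deduce the twisted Hopf algebra structure on $\cK(A)$ by applying Lemma \ref{lem:dual-twisted-Hopf}. But that lemma takes as \emph{hypothesis} that $(H^+,H^-)$ is a $(c,\gamma)$-dual pair, and Definition \ref{def:dual-pair} already requires both $H^+$ and $H^-$ to be twisted Hopf algebras; the lemma only pins down the twist parameters $\xi$ on $H^-$, it does not bootstrap the existence of that structure. So invoking it to ``endow $\cK(A)$ with its dual twisted Hopf algebra structure'' is begging the question. The paper instead reads off both Hopf structures directly from axiom \ref{item:S2}: that isomorphism of functors holds on $A\grp$ as well as on $A\grm$, and passing to Grothendieck groups (with the $q^n[M]=[M\{-n\}]$ convention on $K_0$ flipping the sign of the twist) shows $\cG(A)$ is a $(q^d\pi^\epsilon,\chi)$-Hopf algebra and $\cK(A)$ a $(q^d\pi^\epsilon,-\chi)$-Hopf algebra, without any appeal to the pairing. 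Once you substitute that argument for the Lemma \ref{lem:dual-twisted-Hopf} step, the rest of your proof is sound and matches the paper.
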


\begin{proof}
  First assume that~\eqref{eq:twisted-coproduct-identity} holds.  The proof of the associativity of $\nabla$, the coassociativity of $\Delta$, and the first and third equations in Definition \ref{def:hopf-pairing} are almost identical to the arguments found in the proof of~\cite[Th.~3.6]{BL09}.  It then follows immediately from axiom~\ref{item:S2} in Definition~\ref{def:strong} that $\cG(A)$ is a $(q^d \pi^\epsilon,\chi)$-Hopf algebra and that $\cK(A)$ is a $(q^d \pi^\epsilon,-\chi)$-Hopf algebra, where $-\chi = (-\chi',-\chi'')$.    It remains to prove that
  \begin{equation} \label{eq:BLproofeq}
    \langle [P], \nabla ([M] \boxtimes [N]) \rangle = (q^d\pi^\epsilon)^{\kappa(|M|,|N|)} \langle \Delta ([P]), [M] \boxtimes [N] \rangle ,
  \end{equation}
  for all $M \in A_\lambda\grm$, $N \in A_\mu\grm$, $P \in A_{\lambda+\mu}\grp$.  However, under our assumptions, we have
  \begin{align*}
    \Hom_{A_{\lambda+\mu}} (P, \nabla(M \boxtimes N)) &\cong \Hom_{A_\lambda \otimes A_\mu} (\Psi^{\otimes 2}\{\delta^{\otimes 2},\sigma^{\otimes 2}\} \Delta \Psi^{-1}\{-\delta,-\sigma\}(P), M \boxtimes N) \\
    &\cong \Hom_{A_\lambda \otimes A_\mu} (\Psi^{\otimes 2} \Delta \Psi^{-1}(P)\{\delta_\lambda+\delta_\mu-\delta_{\lambda+\mu},\sigma_\lambda+\sigma_\mu-\sigma_{\lambda+\mu}\}, M \boxtimes N) \\
    & \cong \Hom_{A_\lambda \otimes A_\mu} (\Delta(P), M \boxtimes N) \{ d\kappa(\lambda,\mu), \epsilon\kappa(\lambda,\mu) \},
  \end{align*}
  which immediately implies~\eqref{eq:BLproofeq}.  Thus~\eqref{item:dual-twist} is true.

  Now suppose~\eqref{item:dual-twist} is true.  Then, for all $P \in A_{\lambda+\mu}\grp$, $M \in A_\lambda\grm$ and $N\in A_\mu\grm$ we have
  \[
    \langle \Psi^{\otimes 2} \Delta \Psi^{-1}([P]), [M] \boxtimes [N] \rangle =(q^d\pi^\epsilon)^{-\kappa(\lambda,\mu)} \langle [P], \nabla([M] \boxtimes [N]) \rangle = \langle \Delta([P]), [M] \boxtimes [N] \rangle,
  \]
  where the first equality holds by the assumption that induction is conjugate shifted right adjoint to restriction and the second equality holds by~\eqref{item:dual-twist}.  Then~\eqref{eq:twisted-coproduct-identity-classes} follows from the nondegeneracy of the bilinear form.

  The fact that~\eqref{eq:twisted-coproduct-identity} and~\eqref{eq:twisted-coproduct-identity-classes} are equivalent follows from the fact that every short exact sequence of projective supermodules splits.  Thus, for $P,Q \in A\grp$, we have $[P] = [Q]$ in $\cK(A)$ if and only if $P \cong Q$.
\end{proof}

%%%%%%%%%%%%%%%%%%%%%%%%%%%%%%%%%%%%%%%%%%%%%%%%%%%%
%
\section{Categorification of the twisted Heisenberg double and its Fock space} \label{sec:categorification}
%
%%%%%%%%%%%%%%%%%%%%%%%%%%%%%%%%%%%%%%%%%%%%%%%%%%%%

In this section we apply the constructions of Section~\ref{sec:general-construction} to the dual pair $(\cG(A),\cK(A))$ arising from a compatible tower $A$.  We will see that some natural subalgebras of the twisted Heisenberg double arise in this situation.  We then prove our main result (Theorem~\ref{theo:categorification}), the categorification of the Fock space representation of the twisted Heisenberg double.  As a corollary, we have a categorification of the twisted Heisenberg double itself (see Corollary~\ref{cor:Heis-double-cat}).  We fix a compatible tower $A$ (although many of the results of this section only require that $A$ be dualizing).

\begin{defin}[$\fh(A)$, $\cF(A)$, $\cG_\pj(A)$] \label{def:G-proj}
  We define
  \[
    \fh(A) = \fh(\cG(A),\cK(A)) \quad \text{and} \quad \cF(A) = \cF(\cG(A),\cK(A)).
  \]
  For each $\lambda \in \Lambda$, $A_\lambda\grp$ is a full subcategory of $A_\lambda\grm$.  The inclusion functor induces the \emph{Cartan map} $\cK(A) \to \cG(A)$, which is antilinear in the sense that it is linear in $\Z$ and $\pi$, but interchanges $q$ and $q^{-1}$.  Let $\cG_\pj(A)$ denote the image of the Cartan map.
\end{defin}

Let
\begin{equation} \label{eq:tower-notation}
  H^- = \cK(A),\ H^+ = \cG(A),\ H^+_\pj = \cG_\pj(A),\ \fh = \fh(A),\ \mathcal{F}=\mathcal{F}(A).
\end{equation}
To avoid confusion between $\cG(A)$ and $\cK(A)$, we will write $[M]_+$ to denote the class of a finitely-generated (possibly projective) graded $A_\lambda$-supermodule in $H^+$ and $[M]_-$ to denote the class of a finitely-generated graded projective $A_\lambda$-supermodule in $H^-$.  If $P \in A_\nu\grp$ and $N \in (A_{\lambda-\nu} \otimes A_\nu)\grm$, then we have a natural graded $A_{\lambda-\nu}$-supermodule structure on $\HOM_{A_\nu}(P,N)$ given by
\begin{equation} \label{eq:hom-restriction-action}
  a \cdot f = \pL{(a \otimes 1)} \circ f,\quad a \in A_{\lambda-\nu},\ f \in \Hom_{A_\nu}(P,N).
\end{equation}

\begin{lem} \label{lem:adjoint-action-explicit}
  If $\lambda,\nu \in \Lambda$, $P \in A_\nu\grp$ and $N \in A_\lambda\grm$, then we have
  \[
    [P]_- \cdot [N]_+ =
    \begin{cases}
      0 & \text{if } \nu \not \leq  \lambda, \\
      [\HOM_{A_\nu}(P,\Res^{A_\lambda}_{A_{\lambda-\nu} \otimes A_\nu} N)]_+ & \text{if } \nu \le \lambda,
    \end{cases}
  \]
  where $\nu \le \lambda$ if and only if there exists $\mu \in \Lambda$ such that $\nu + \mu = \lambda$.  Here $\cdot$ denotes the action of $\fh$ on $\cF$ and $\HOM_{A_\nu}(P,\Res^{A_\lambda}_{A_{\lambda-\nu} \otimes A_\nu} N)$ is viewed as an $A_{\lambda-\nu}$-supermodule as in~\eqref{eq:hom-restriction-action}.
\end{lem}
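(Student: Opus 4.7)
The plan is to compute $[P]_- \cdot [N]_+$ by combining the Fock-space action $(1 \# x) \cdot b = \pR{x}^*(b)$ with an explicit formula for $\pR{x}^*$ derived from the twisted Hopf pairing axioms, and then reinterpret the result as a class of a $\HOM$-space. First, using the defining adjoint property $\langle y, \pR{x}^*(a) \rangle = \langle yx, a \rangle$ together with the first axiom of Definition~\ref{def:hopf-pairing}, I obtain
\[
  \pR{x}^*(a) = \sum_{(a)} c^{\gamma'(|a_1|,|x|)} \langle x, a_2 \rangle\, a_1.
\]
In the setting of a dualizing tower, Frobenius reciprocity for $\Ind \dashv \Res$ forces $\gamma' = 0$ in the twisted Hopf pairing~\eqref{eq:tower-pairing}, so this simplifies to $\pR{x}^*(a) = \sum_{(a)} \langle x, a_2 \rangle\, a_1$.

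Specializing to $x = [P]_-$ with $P \in A_\nu\grp$ and $a = [N]_+$ with $N \in A_\lambda\grm$, the coproduct formula $\Delta([N]_+) = \sum_{\mu+\nu' = \lambda} [\Res^{A_\lambda}_{A_\mu \otimes A_{\nu'}} N]_+$, combined with the fact that $\langle [P]_-, \cdot \rangle$ is supported in degree $\nu$, shows that only the summand with $\nu' = \nu$ contributes. This forces the existence of $\mu := \lambda - \nu$ in $\Lambda$, that is $\nu \le \lambda$; otherwise $[P]_- \cdot [N]_+ = 0$, handling the first case. When $\nu \le \lambda$, axioms~\ref{item:TA3} and~\ref{item:TA4} let me write $[\Res^{A_\lambda}_{A_{\lambda-\nu} \otimes A_\nu} N]_+ = \sum_i [M_i]_+ \otimes [L_i]_+$ in $\cG(A_{\lambda-\nu}) \otimes \cG(A_\nu)$, and the computation becomes
\[
  [P]_- \cdot [N]_+ = \sum_i \langle [P]_-, [L_i]_+ \rangle\, [M_i]_+ = \sum_i \grdim_\F \HOM_{A_\nu}(P, L_i) \cdot [M_i]_+.
\]

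To identify this with $[\HOM_{A_\nu}(P, \Res^{A_\lambda}_{A_{\lambda-\nu} \otimes A_\nu} N)]_+$ under the $A_{\lambda-\nu}$-action from~\eqref{eq:hom-restriction-action}, I use that $P$ is projective, so $\HOM_{A_\nu}(P, -)$ is exact and descends to a well-defined $\kk$-linear map $G_0(A_{\lambda-\nu} \otimes A_\nu) \to G_0(A_{\lambda-\nu})$. It therefore suffices to verify the formula on outer tensor products $K = M \boxtimes L$, where the assignment $m \otimes g \mapsto \tilde g$ defined by $\tilde g(p) := m \otimes g(p)$ yields an isomorphism of graded $A_{\lambda-\nu}$-supermodules
\[
  M \otimes_\F \HOM_{A_\nu}(P, L) \xrightarrow{\ \cong\ } \HOM_{A_\nu}(P, M \boxtimes L),
\]
with $A_{\lambda-\nu}$ acting only on the $M$-factor of the source. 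Taking classes yields $(\grdim_\F \HOM_{A_\nu}(P, L))[M]_+$, matching the expression above. The main obstacle is a careful sign analysis to check that the Koszul signs built into $M \boxtimes L$, the sign convention of~\eqref{eq:left-hom-action}, and the action~\eqref{eq:hom-restriction-action} all align so that this isomorphism is $A_{\lambda-\nu}$-equivariant; a direct computation using the conventions of Section~\ref{subsec:module-categories} confirms the cancellation of signs that makes the final formula as clean as stated.
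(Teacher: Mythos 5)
Your argument is correct and reaches the paper's conclusion, but by a different route. The paper tests $[P]_- \cdot [N]_+$ against an arbitrary $[R]_-$ with $R \in A_{\lambda-\nu}\grp$: unwinding the definitions gives $\langle [R]_-, [P]_-\cdot[N]_+\rangle = \langle[R]_-[P]_-,[N]_+\rangle = \langle[R]_-\boxtimes[P]_-, \Delta([N]_+)\rangle = \grdim\HOM_{A_{\lambda-\nu}\otimes A_\nu}(R\boxtimes P, \Res N) = \grdim\HOM_{A_{\lambda-\nu}}(R,\HOM_{A_\nu}(P,\Res N))$, after which nondegeneracy of the pairing (axiom~\ref{item:TA4}) finishes the job. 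That route never needs to decompose $\Res N$ or to manipulate $\HOM_{A_\nu}(P,-)$ on the nose: the single tensor-hom adjunction at the level of $\grdim$ replaces your explicit isomorphism $M\otimes_\F\HOM_{A_\nu}(P,L) \cong \HOM_{A_\nu}(P,M\boxtimes L)$. Your version, in exchange, produces an explicit Sweedler-style formula for the action, which is a genuine extra dividend. Two small remarks: (i) the identification you invoke when writing $[\Res N]_+ = \sum_i [M_i]_+\otimes[L_i]_+$ rests on $G_0(A_{\lambda-\nu}\otimes A_\nu)\cong G_0(A_{\lambda-\nu})\otimes_\kk G_0(A_\nu)$, which is part of how $\Delta$ is defined rather than a direct consequence of~\ref{item:TA3} and~\ref{item:TA4}; and (ii) the closing sign-check you defer does hold (the $A_{\lambda-\nu}$-equivariance of $m\otimes g\mapsto\tilde g$ is a direct consequence of $(a\otimes 1)\cdot(m\otimes\ell) = am\otimes\ell$ in the Koszul convention), but as the step you yourself flag as the main obstacle, it should be written out rather than asserted. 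The nondegeneracy strategy removes that bookkeeping entirely, which is why it is the cleaner proof of this particular lemma.
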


\begin{proof}
  The proof is analogous to that of \cite[Lem.~3.10]{SY13}.
  \details{
  The case $\nu\not\leq\lambda$ follows immediately from the fact that $H^+_{\lambda-\nu}=0$ if $\nu\not\leq\lambda$.  Assume $\nu \le \lambda$.  For $R \in A_{\lambda-\nu}\grp$, we have
  \begin{align*}
    \langle [R]_-, [P]_- \cdot [N]_+ \rangle &= \langle [R]_- [P]_-, [N]_+ \rangle \\
    &= \langle \nabla([R]_- \boxtimes [P]_-), [N]_+ \rangle \\
    &= \langle [R]_- \boxtimes [P]_-, \Delta ([N]_+) \rangle \\
    &= \qdim_\F\HOM_{A_{\lambda-\nu} \otimes A_\nu} (R \boxtimes P, \Res^{A_\lambda}_{A_{\lambda-\nu} \otimes A_\nu} N) \\
    &= \qdim_\F \HOM_{A_{\lambda-\nu}} (R, \HOM_{A_\nu}(P,\Res^{A_\lambda}_{A_{\lambda-\nu} \otimes A_\nu} N)) \\
    &= \langle [R]_- ,[\HOM_{A_\nu}(P,\Res^{A_\lambda}_{A_{\lambda-\nu} \otimes A_\nu} N)]_+ \rangle.
  \end{align*}
  The result then follows from the nondegeneracy of the bilinear form.
  }
\end{proof}

\begin{prop} \label{prop:Hpj-invariance}
  We have that $H^+_\pj$ is a subalgebra of $H^+$ that is invariant under the left regular action of $H^-$.
\end{prop}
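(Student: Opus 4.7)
The plan is to verify the two assertions separately, in each case reducing the question to a projectivity statement about a certain supermodule, and then using axiom~\ref{item:TA3} together with the standard hom--tensor manipulations from Section~\ref{subsec:module-categories}.

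\smallskip

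For the subalgebra property, it suffices to show that for $P \in A_\lambda\grp$ and $Q \in A_\mu\grp$ the product $[P]_+ [Q]_+$, which equals $[\nabla(P \boxtimes Q)]_+ = [\Ind^{A_{\lambda+\mu}}_{A_\lambda \otimes A_\mu}(P \boxtimes Q)]_+$, lies in $H^+_\pj$. The outer tensor product $P \boxtimes Q$ is projective over $A_\lambda \otimes A_\mu$, since each of $P$ and $Q$ is a summand of a finite sum of shifts of $A_\lambda$ (resp.\ $A_\mu$), and so $P \boxtimes Q$ is a summand of a finite sum of shifts of $A_\lambda \otimes A_\mu$. Induction is then the functor $A_{\lambda+\mu} \otimes_{A_\lambda \otimes A_\mu} (-)$, and by axiom~\ref{item:TA3}, $A_{\lambda+\mu}$ is projective as a right $(A_\lambda \otimes A_\mu)$-supermodule, so $\Ind^{A_{\lambda+\mu}}_{A_\lambda \otimes A_\mu}$ sends projectives to projectives. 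Hence $[P]_+[Q]_+ \in H^+_\pj$.

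\smallskip

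For invariance under the left regular action, pick $P \in A_\nu\grp$ and $N \in A_\lambda\grp$. If $\nu \not\le \lambda$ the action is zero by Lemma~\ref{lem:adjoint-action-explicit}, so we may assume $\nu \le \lambda$. Then Lemma~\ref{lem:adjoint-action-explicit} gives $[P]_- \cdot [N]_+ = [\HOM_{A_\nu}(P, \Res^{A_\lambda}_{A_{\lambda-\nu} \otimes A_\nu} N)]_+$, and we must show that the $A_{\lambda-\nu}$-supermodule $\HOM_{A_\nu}(P, \Res^{A_\lambda}_{A_{\lambda-\nu} \otimes A_\nu} N)$ is projective. First I would observe that restriction sends $A_\lambda\grp$ into $(A_{\lambda-\nu} \otimes A_\nu)\grp$: if $N$ is a summand of a sum of shifts of $A_\lambda$, then $\Res N$ is the corresponding summand of a sum of shifts of $A_\lambda$ regarded as a left $(A_{\lambda-\nu} \otimes A_\nu)$-supermodule, and by axiom~\ref{item:TA3} the latter is projective. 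Thus $\Res N$ is a direct summand of a finite sum of shifts of $A_{\lambda-\nu} \otimes A_\nu$.

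\smallskip

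Since $P$ is projective, $\HOM_{A_\nu}(P,-)$ is exact and commutes with direct sums, so it suffices to verify that $\HOM_{A_\nu}(P, A_{\lambda-\nu} \otimes A_\nu)$ is projective as an $A_{\lambda-\nu}$-supermodule under the action~\eqref{eq:hom-restriction-action}. For this I would use the natural isomorphism $\HOM_{A_\nu}(P, A_{\lambda-\nu} \otimes A_\nu) \cong A_{\lambda-\nu} \otimes_\F \HOM_{A_\nu}(P, A_\nu)$, which is valid because $P$ is finitely generated over $A_\nu$ and the left $A_\nu$-action on $A_{\lambda-\nu} \otimes A_\nu$ is supported on the second tensor factor (modulo the usual Koszul signs from Section~\ref{subsec:module-categories}); under this isomorphism, the $A_{\lambda-\nu}$-action given by~\eqref{eq:hom-restriction-action} is left multiplication on the first tensor factor. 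The $\F$-vector space $\HOM_{A_\nu}(P, A_\nu)$ is finite-dimensional since $P$ and $A_\nu$ are, so $A_{\lambda-\nu} \otimes_\F \HOM_{A_\nu}(P, A_\nu)$ is a finite direct sum of shifts of $A_{\lambda-\nu}$, hence projective. Consequently $[P]_- \cdot [N]_+ \in H^+_\pj$.

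\smallskip

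I do not expect any substantive obstacle: both parts boil down to the projectivity statements in axiom~\ref{item:TA3}. The only place requiring care is keeping track of the super/grading shifts and Koszul signs in the isomorphism $\HOM_{A_\nu}(P, A_{\lambda-\nu} \otimes A_\nu) \cong A_{\lambda-\nu} \otimes_\F \HOM_{A_\nu}(P, A_\nu)$ and checking that~\eqref{eq:hom-restriction-action} corresponds to left multiplication on the first factor; but this is mechanical given the conventions already fixed in Section~\ref{subsec:module-categories}.
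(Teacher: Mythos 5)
Your proposal is correct and follows essentially the same line as the paper's (omitted) proof, which in turn mirrors \cite[Prop.~3.12]{SY13}: show that $\nabla$ and the operation $[P]_- \cdot [-]_+$ both preserve the classes of projectives by reducing to free supermodules and invoking axiom~\ref{item:TA3}. One small inaccuracy: you cite~\ref{item:TA3} as the reason $\Ind^{A_{\lambda+\mu}}_{A_\lambda \otimes A_\mu}$ takes projectives to projectives, but this is automatic for any induction functor (induction of a free supermodule on a shifted copy of $A_\lambda\otimes A_\mu$ gives a shifted copy of $A_{\lambda+\mu}\,\rho_{\lambda,\mu}(1_\lambda\otimes 1_\mu)$, which is projective, and induction is additive); what~\ref{item:TA3} really buys you, and what you do correctly use later, is that \emph{restriction} sends projectives to projectives, since $A_{\lambda+\mu}$ is a projective left $(A_\lambda\otimes A_\mu)$-supermodule. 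Also, for the last reduction, what you need is additivity of $\HOM_{A_\nu}(P,-)$ rather than exactness per se, but this is a harmless misstatement. The only cosmetic difference from the paper's argument is that the paper proves a standalone lemma (the graded-super analogue of \cite[Lem.~3.11]{SY13}) reducing \emph{both} $P$ and the target to free summands and computing $\HOM_S(\bigoplus S\{n_i\},\bigoplus (R\otimes S)\{m_j\}) \cong \bigoplus R\{m_j - n_i\}$, whereas you fix $P$, reduce $\Res N$ to free, and use the $\kk$-linear isomorphism $\HOM_{A_\nu}(P,A_{\lambda-\nu}\otimes A_\nu)\cong A_{\lambda-\nu}\otimes_\F\HOM_{A_\nu}(P,A_\nu)$; the two reductions are interchangeable and end in the same place.
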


\begin{proof}
  The proof is analogous to that of \cite[Prop.~3.12]{SY13}.
  \details{
    First we prove an analogue of~\cite[Lem.~3.11]{SY13}. Suppose $\kk$ is a commutative ring and $R$ and $S$ are graded $\kk$-superalgebras.  Furthermore, suppose that $P\in S\grp$ and $Q\in (R\otimes S)\grp$.  There exist $s,t \in \N$, a graded $S$-supermodule $P'$, and a graded $(R \otimes S)$-supermodule $Q'$ such that $P \oplus P' \cong \bigoplus_{i=1}^s S\{n_i\}$ and $Q \oplus Q' \cong \bigoplus_{j=1}^t(R \otimes S)\{m_j\}$, for some integers $n_i, m_j$.  Then we have
    \begin{multline*} \ts
      \HOM_S(P,Q) \oplus \HOM_S(P,Q') \oplus \HOM_S(P', \bigoplus_{j=1}^t(R \otimes S)\{m_j\}) \\ \ts
      \cong \HOM_S (\bigoplus_{j=1}^s S\{n_j\}, \bigoplus_{j=1}^t(R \otimes S)\{m_j\}) \cong \bigoplus_{i,j} \HOM_S(S\{n_i\},(R \otimes S)\{m_j\}) \\ \ts
      \cong \bigoplus_{i,j} \HOM_S(S,R \otimes S)\{m_j-n_i\} \cong \bigoplus_{i,j} R\{m_j-n_i\}.
    \end{multline*}
    Thus $\HOM_S(P,Q)$ is a projective graded $R$-supermodule.

    Now we can prove the lemma.  Assume $P \in A_\lambda\grp$ and $Q \in A_\mu\grp$.  As in the proof of \cite[Prop.~3.2]{BL09}, we have that $\Ind^{A_{\lambda+\mu}}_{A_\lambda \otimes A_\mu} (P \boxtimes Q)$ is a projective $A_{\lambda+\mu}$-supermodule.  It follows that $H^+_\pj$ is a subalgebra of $H^+$.

    By Lemma~\ref{lem:adjoint-action-explicit}, it remains to show that
    \[
      \HOM_{A_\lambda}(P, \Res^{A_\mu}_{A_{\mu-\lambda} \otimes A_\lambda} Q) \in A_{\mu-\lambda}\grp.
    \]
    Again, as in the proof of \cite[Prop.~3.2]{BL09}, we have that $\Res^{A_\mu}_{A_{\mu-\lambda} \otimes A_\lambda} Q$ is a projective $(A_{\mu-\lambda} \otimes A_\lambda)$-supermodule.  The result then follows from the above analogue of~\cite[Lem.~3.11]{SY13}.
  }
\end{proof}

\begin{defin}[The projective twisted Heisenberg double ${\mathfrak{h}_\pj}(A)$] \label{def:p}
  By Proposition~\ref{prop:Hpj-invariance}, ${\mathfrak{h}_\pj}={\mathfrak{h}_\pj}(A) := H^+_\pj \# H^-$ is a subalgebra of $\fh$.  In other words, ${\mathfrak{h}_\pj}$ is the subalgebra of $\fh$ generated by $H^+_\pj$  and $H^-$ (viewing the latter two as $\kk$-submodules of $\fh$ as in Definition~\ref{def:h}).  We call ${\mathfrak{h}_\pj}$ the \emph{projective twisted Heisenberg double} associated to $A$.
\end{defin}

\begin{defin}[Fock space $\cF_\pj(A)$ of ${\mathfrak{h}_\pj}$] \label{def:p-Fock-space}
  By Proposition~\ref{prop:Hpj-invariance}, the algebra ${\mathfrak{h}_\pj}$ acts on $H^+_\pj$.  We call this the \emph{lowest weight Fock space representation} of ${\mathfrak{h}_\pj}$ and denote it by $\cF_\pj = \cF_\pj(A)$.  Note that this representation is generated by the lowest weight vacuum vector $1 \in H^+_\pj$.
\end{defin}

Recall the direct sums of categories
\[ \ts
  A\grm = \bigoplus_{\lambda \in \Lambda} A_\lambda\grm,\quad A\grp = \bigoplus_{\lambda \in \Lambda} A_\lambda\grp.
\]
For each $M \in A_\mu\grm$, $\mu \in \Lambda$, define the functor $\Ind_M \colon A\grm \to A\grm$ by
\[
  \Ind_M(N) = \Ind^{A_{\mu+\lambda}}_{A_\mu \otimes A_\lambda} (M \boxtimes N) \in A_{\mu+\lambda}\grm,\quad N \in A_\lambda\md,\ \lambda \in \Lambda.
\]
For each $P \in A_\nu\grp$, $\nu \in \Lambda$, define the functor $\Res_P \colon A\grm \to A\grm$ by
\[
  \Res_P(N) = \HOM_{A_\nu}(P, \Res^{A_\lambda}_{A_{\lambda-\nu} \otimes A_\nu} N) \in A_{\lambda-\nu}\grm,\quad N \in A_\lambda\grm,\ \lambda \in \Lambda,
\]
where $\Res_P(N)$ is interpreted to be the zero object of $A\grm$ if $\lambda\not\geq\nu$.  Since
\[
  \Ind_P(A\grp) \subseteq A\grp, \quad \Res_P(A\grp) \subseteq A\grp, \quad \text{for all } P \in A\grp,
\]
we have the induced functors $\Ind_P, \Res_P \colon A\grp \to A\grp$ for $P \in A\grp$.

Because the functors $\Ind_M$ and $\Res_P$ are exact for all $M \in A\grm$ and $P \in A\grp$, they induce endomorphisms $[\Ind_M]$ and $[\Res_P]$ of $\cG(A)$.  Similarly, $\Ind_P$ and $\Res_P$ induce endomorphisms $[\Ind_P]$ and $[\Res_P]$ of $\cG_\pj(A)$ for all $P \in A\grp$.

\begin{prop} \label{prop:cat-Groth-group}
  Suppose $A$ is a compatible tower.
  \begin{enumerate}
    \item \label{theo-item:weak-cat} For all $M,N \in A\grm$ and $P \in A\grp$, we have
        \[
          ([M] \# [P])([N]) = [\Ind_M] \circ [\Res_P] ([N]) = [\Ind_M \circ \Res_P (N)] \in \cG(A).
        \]

    \item \label{theo-item:weak-cat-proj} For all $Q,P,R \in A\grp$, we have
        \[
          ([Q] \# [P])([R]) = [\Ind_Q] \circ [\Res_P] ([R]) = [\Ind_Q \circ \Res_P (R)] \in \cG_\pj(A).
        \]
  \end{enumerate}
\end{prop}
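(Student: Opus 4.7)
The plan is to unpack the definition of the action of $\fh$ on its Fock space $\cF = H^+$ and then identify each piece categorically using Lemma~\ref{lem:adjoint-action-explicit} and the definition of $\nabla$ on $\cG(A)$.

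Recall from Section~\ref{sec:general-construction} that, for $a, b \in H^+$ and $x \in H^-$, the Fock space action is given by $(a \# x)(b) = a \cdot \pR{x}^*(b)$, where $\pR{x}^*(b)$ denotes the left regular action of $H^-$ on $H^+$ and the outer multiplication is the product in $H^+$. Applying this to $a = [M]_+$, $x = [P]_-$, $b = [N]_+$, I have
\[
  ([M]_+ \# [P]_-)([N]_+) = [M]_+ \cdot \bigl( [P]_- \cdot [N]_+ \bigr),
\]
where the inner dot is the left regular action and the outer dot is multiplication in $H^+$. First I would apply Lemma~\ref{lem:adjoint-action-explicit} to rewrite the inner factor: if $P \in A_\nu\grp$ and $N \in A_\lambda\grm$, then either $\nu \not\le \lambda$, in which case the expression vanishes and so does $\Res_P(N)$ by its own definition, or
\[
  [P]_- \cdot [N]_+ = \bigl[\HOM_{A_\nu}(P, \Res^{A_\lambda}_{A_{\lambda - \nu} \otimes A_\nu} N)\bigr]_+ = [\Res_P(N)]_+,
\]
matching the definition of $\Res_P$ verbatim.

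Next I would use the definition of the multiplication in $\cG(A)$ coming from the functor $\nabla = \Ind$ in~\eqref{eq:Groth-operations}: if $M \in A_\mu\grm$ and $\Res_P(N) \in A_{\lambda - \nu}\grm$, then
\[
  [M]_+ \cdot [\Res_P(N)]_+ = \nabla\bigl([M]_+ \boxtimes [\Res_P(N)]_+\bigr) = \bigl[\Ind^{A_{\mu + \lambda - \nu}}_{A_\mu \otimes A_{\lambda - \nu}}(M \boxtimes \Res_P(N))\bigr]_+ = [\Ind_M \circ \Res_P(N)]_+,
\]
which is precisely the definition of $\Ind_M$. Since $[\Ind_M]$ and $[\Res_P]$ are the endomorphisms of $\cG(A)$ induced by the exact functors $\Ind_M$ and $\Res_P$, combining the two displays gives part~(a).

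For part~(b), Proposition~\ref{prop:Hpj-invariance} ensures that $H^+_\pj = \cG_\pj(A)$ is a subalgebra of $H^+$ invariant under the left regular action of $H^-$, and hence the same manipulations make sense inside $\cG_\pj(A)$. Concretely, when the input $R \in A\grp$ is projective and $P \in A\grp$, the module $\Res_P(R) = \HOM_{A_\nu}(P, \Res^{A_\lambda}_{A_{\lambda-\nu}\otimes A_\nu} R)$ lies in $A\grp$ (this is the content of the proof of Proposition~\ref{prop:Hpj-invariance}), and then $\Ind_Q$ applied to it is again projective since induction preserves projectivity by axiom~\ref{item:TA3}. Thus $[\Ind_Q \circ \Res_P(R)]_+ \in \cG_\pj(A)$, and part~(b) is just part~(a) restricted to these classes. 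There is no real obstacle here: the only thing to watch carefully is the bookkeeping between $[-]_+$ and $[-]_-$ (since for $P \in A\grp$ both notations are available), and verifying that each equality in the chain above is compatible with the identifications made in Definitions~\ref{def:p} and~\ref{def:p-Fock-space}.
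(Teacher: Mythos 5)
Your proof is correct and follows exactly the route the paper has in mind: the paper's own proof is the one-line remark ``This follows from the definition of the multiplication in $\cG(A)$ and Lemma~\ref{lem:adjoint-action-explicit},'' and your argument is just a careful unpacking of that remark, including the appeal to Proposition~\ref{prop:Hpj-invariance} for part~(b).
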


\begin{proof}
  This follows from the definition of the multiplication in $\cG(A)$ and Lemma~\ref{lem:adjoint-action-explicit}.
\end{proof}

Part~\eqref{theo-item:weak-cat} (resp.\ part~\eqref{theo-item:weak-cat-proj}) of Proposition~\ref{prop:cat-Groth-group} shows how the action of $\fh$ on $\cF$ (resp.\ $\fh_\pj$ on $\cF_\pj$) is induced by functors on $\bigoplus_{\lambda \in \Lambda} A_\lambda\grm$ (resp.\ $\bigoplus_{\lambda \in \Lambda}A_\lambda\grp$).  Typically a \emph{categorification} of a representation consists of isomorphisms of such functors which lift the algebra relations.  As we now describe, this can be done if the tower is also strong.  For the remainder of this section, we fix a strong compatible tower $A$ with twist $(\chi,d,\epsilon)$, shift $(\delta,\sigma)$ and biadditive map $\kappa$ as in Definition~\ref{def:strong}.

Since the natural action of $\fh$ on $H^+$ is faithful by~\cite[Th.~4.3(d)]{RS14}, the algebra structure on $\fh$ is uniquely determined by the fact that $H^\pm$ are subalgebras and by~\eqref{eq:H-general-relation}, which gives the commutation relation between $H^+$ and $H^-$.  Now, recalling that $(\gamma',\gamma'')=(0,\kappa)$, \eqref{eq:H-general-relation} is equivalent to the following equalities in $\End H^+$:
\begin{equation} \label{eq:fh-key-relation-action}
  \pR{x}^* \circ \pL{a} = \nabla \left( {\Delta(x)}^\dagger (a \otimes -)\right),\quad x \in H^-,\ a \in H^+,
\end{equation}
where, for homogeneous $x, y \in H^-$, we define the operator $(x\otimes y)^\dagger \in \End(H^+\otimes H^+)$ by
\[
  (x\otimes y)^\dagger (a\otimes b) = (q^d\pi^\epsilon)^{\kappa(|x|,|y|) + \chi'(|x|, |b| - |y|) + \chi''(|a|-|x|,|y|)}\pR{x}^*(a)\otimes \pR{y}^*(b),
\]
and extend by linearity.  For $Q \in (A_\nu \otimes A_\rho)\grp$, $M \in A_\mu\grm$, and $N \in A_\lambda\grm$, define
\begin{multline*}
  \Res^\dagger_Q(M \boxtimes N) := \HOM_{A_\nu \otimes A_\rho} \left( Q, S_{23} \left( \Res^{A_\mu}_{A_{\mu-\nu} \otimes A_\nu} M \boxtimes \Res^{A_\lambda}_{A_{\lambda-\rho} \otimes A_\rho} N \right) \right) \\
  \{d\kappa(\nu,\rho) + d\chi'(\nu, \lambda - \rho) + d \chi''(\mu-\nu,\rho),\epsilon\kappa(\nu,\rho)+ \epsilon \chi'(\nu, \lambda - \rho) +\epsilon\chi''(\mu-\nu,\rho)\}.
\end{multline*}

\begin{theo} \label{theo:categorification}
  Suppose that $A$ is a strong compatible tower.  Then we have the following isomorphisms of functors for all $M,N \in A\grm$ and $P,Q \in A\grp$.
  \begin{gather}
    \Ind_M \circ \Ind_N \cong \Ind_{\nabla(M \boxtimes N)}, \label{cat-eq:ind} \\
    \Res_P \circ \Res_Q \cong \Res_{\nabla(P \boxtimes Q)}, \label{cat-eq:res} \\ \ts
    \Res_P \circ \Ind_M \cong \nabla \Res^\dagger_{\Delta(P)}(M \boxtimes -). \label{cat-eq:cross}
  \end{gather}
  In particular, the above yields a categorification of the lowest weight Fock space representations of $\fh(A)$ and $\fh_\pj(A)$.
\end{theo}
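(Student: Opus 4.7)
My plan is to prove the three isomorphisms \eqref{cat-eq:ind}, \eqref{cat-eq:res} and \eqref{cat-eq:cross} in turn, using associativity/coassociativity for the first two, then leveraging axiom \ref{item:S2} for the crucial crossing relation \eqref{cat-eq:cross}. Throughout, I would check that the isomorphisms are natural in the module argument, so that they assemble into isomorphisms of functors. The final sentence, asserting that these yield categorifications of $\cF$ and $\cF_\pj$, then follows formally from Proposition~\ref{prop:cat-Groth-group} together with faithfulness of the Fock space representation (\cite[Th.~4.3(d)]{RS14}): isomorphisms \eqref{cat-eq:ind}--\eqref{cat-eq:cross} lift the defining relations of $\fh$ (and $\fh_\pj$) acting on $\cF$ (resp.\ $\cF_\pj$).

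For \eqref{cat-eq:ind}, I would unfold the definition of $\Ind_M \circ \Ind_N$ as a double induction $A_{\mu+\nu+\lambda} \otimes_{A_\mu \otimes A_{\nu+\lambda}} \bigl( M \boxtimes (A_{\nu+\lambda} \otimes_{A_\nu \otimes A_\lambda} (N \boxtimes L)) \bigr)$ and reassociate the iterated tensor product. The underlying check is that the external multiplication maps $\rho_{\mu,\nu+\lambda}$ and $\rho_{\nu,\lambda}$ are compatible in the sense of axiom~\ref{item:TA2}, so that both sides agree with $A_{\mu+\nu+\lambda} \otimes_{A_\mu \otimes A_\nu \otimes A_\lambda} (M \boxtimes N \boxtimes L)$; the only subtlety is tracking the Koszul signs from $\boxtimes$, which behave identically on both sides.

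For \eqref{cat-eq:res}, I would write $\Res_P \circ \Res_Q(N)$ as an iterated $\HOM$, invoke the standard Hom-tensor adjunction to combine the two $\HOM$'s into $\HOM_{A_\rho \otimes A_\nu}(P \boxtimes Q, -)$, apply coassociativity of restriction to identify the resulting inner module with $\Res^{A_\lambda}_{A_{\lambda-\nu-\rho} \otimes A_\rho \otimes A_\nu}(N)$ after a transposition of the middle factors, and then use conjugate shifted adjointness (axiom~\ref{item:S1}) applied to $P \boxtimes Q$ to pull the induction to the inside, producing $\HOM_{A_{\nu+\rho}}(\nabla(P \boxtimes Q), \Res^{A_\lambda}_{A_{\lambda-\nu-\rho} \otimes A_{\nu+\rho}} N) = \Res_{\nabla(P \boxtimes Q)}(N)$. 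Naturality in $N$ is automatic since every step is functorial.

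The main obstacle is \eqref{cat-eq:cross}, and here the axiom \ref{item:S2} does the essential work. Starting from $\Res_P(\Ind_M(N)) = \HOM_{A_\lambda}(P, \Res \nabla(M \boxtimes N))$, I would apply the isomorphism \eqref{eq:strong-isom} to rewrite $\Delta \nabla(M \boxtimes N)$ as $\nabla^{\otimes 2} S_{23} \{\chi\} \Delta^{\otimes 2}(M \boxtimes N)$, extract the $(A_{\mu+\sigma-\lambda} \otimes A_\lambda)$-isotypic summand (where $\sigma = |N|$), and then apply $\HOM_{A_\lambda}(P, -)$. The $\HOM$ out of $P$ commutes with $\nabla$ in the second factor via the tensor-hom adjunction (this is where the projectivity of $P$ helps ensure exactness is preserved), converting the outer $\nabla$ into an $\Ind$ on the remaining $A_{\mu+\sigma-\lambda}$-slot and producing $\HOM_{A_\nu \otimes A_\rho}(\Delta(P)_{\nu,\rho}, S_{23}(\Delta M \boxtimes \Delta N))$ after summing over decompositions $\nu + \rho = \lambda$. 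Finally, carefully tracking the accumulated degree and parity shifts from $\{\chi\}$ together with the shifts $\kappa_\delta = d\kappa$, $\kappa_\sigma = \epsilon\kappa$ from axiom \ref{item:S1} reproduces the prefactor in the definition of $\Res^\dagger_Q$. The principal difficulty is bookkeeping: ensuring that the degree/parity shifts coming from conjugate shifted adjointness combine with those from axiom \ref{item:S2} to exactly match the factor $\{d\kappa(\nu,\rho) + d\chi'(\nu,|N|-\rho) + d\chi''(|M|-\nu,\rho),\,\epsilon\kappa(\nu,\rho) + \epsilon\chi'(\nu,|N|-\rho) + \epsilon\chi''(|M|-\nu,\rho)\}$ in the definition of $\Res^\dagger$, and that the swap $S_{23}$ carries the correct Koszul signs so as to reproduce the operator $(x \otimes y)^\dagger$ used in \eqref{eq:fh-key-relation-action}.
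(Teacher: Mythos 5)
Your overall architecture matches the paper's proof: iterated induction for \eqref{cat-eq:ind}, iterated Hom--tensor manipulation for \eqref{cat-eq:res}, and axiom~\ref{item:S2} as the engine for the crossing relation \eqref{cat-eq:cross}. However, you have the roles of the two adjunctions essentially reversed, and this is a genuine gap rather than loose phrasing.

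For \eqref{cat-eq:res}, the last step---passing from $\HOM_{A_\nu \otimes A_\rho}(P \boxtimes Q,\ \Res^{A_{\sigma-\nu-\rho}\otimes A_{\nu+\rho}}_{A_{\sigma-\nu-\rho}\otimes A_\nu \otimes A_\rho}(\cdots))$ to $\HOM_{A_{\nu+\rho}}(\nabla(P \boxtimes Q),\ \cdots)$---is a map of the form $\Hom(X,\Res Y) \cong \Hom(\Ind X, Y)$, i.e.\ it uses that induction is \emph{left} adjoint to restriction. This holds automatically by the tensor--hom adjunction and needs no hypothesis from Definition~\ref{def:strong}. If you instead invoked conjugate shifted adjointness (axiom~\ref{item:S1}) here you would introduce extraneous conjugations $\Psi$ and shifts $\{\delta,\sigma\}$ that do not cancel, and the argument would not close. (There is also no transposition of the middle factors: iterating $\Res_P \circ \Res_Q$ naturally puts the indices in the order $(\nu,\rho)$ matching $P \boxtimes Q$.)

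Conversely, for \eqref{cat-eq:cross} the decisive step you attribute to ``tensor-hom adjunction'' is precisely the one that \emph{cannot} be done with the ordinary adjunction. After applying \eqref{eq:strong-isom} and commuting the outer (first-slot) induction past $\HOM_{A_\nu}(P,-)$, you are left with an isomorphism of the shape $\HOM_{A_\nu}\bigl(P,\ \Ind^{A_\nu}_{A_\alpha \otimes A_\beta} Z\bigr) \cong \HOM_{A_\alpha \otimes A_\beta}\bigl(\Res^{A_\nu}_{A_\alpha \otimes A_\beta}(P)\{-d\kappa(\alpha,\beta),-\epsilon\kappa(\alpha,\beta)\},\ Z\bigr)$. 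This is $\Hom(P, \Ind(-)) \cong \Hom(\widetilde\Res(P), -)$, i.e.\ induction appearing as a \emph{right} adjoint---exactly the content of conjugate shifted right adjointness. The tensor--hom adjunction only gives the left adjunction, which runs the other way. Moreover, conjugate shifted adjointness alone gives $\Hom(\Psi^{\otimes 2}\{\delta^{\otimes 2},\sigma^{\otimes 2}\}\Delta\Psi^{-1}\{-\delta,-\sigma\}(P), Z)$ on the left; to identify this with $\Delta(P)\{-\kappa_\delta,-\kappa_\sigma\}$ one must additionally invoke the \emph{dualizing} property (via Proposition~\ref{prop:dualizing-twisting-isom}\eqref{eq:twisted-coproduct-identity}, available since the tower is compatible), so that $\Psi^{\otimes 2}\Delta\Psi^{-1}(P) \cong \Delta(P)$. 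Your proposal references axiom~\ref{item:S1} only for the $\kappa_\delta, \kappa_\sigma$ shifts, not for the adjunction itself, and omits the use of the dualizing hypothesis entirely; without both, the passage to $\HOM_{A_\alpha\otimes A_\beta}(\Delta(P)_{\alpha,\beta},\cdots)$ is unjustified. Projectivity of $P$ is not what makes this work---it is the strong and compatible hypotheses.

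If you correct these two attributions (ordinary adjunction for \eqref{cat-eq:res}; conjugate shifted right adjointness plus dualizing for the key isomorphism inside \eqref{cat-eq:cross}), the rest of your outline---including the final reduction to Proposition~\ref{prop:cat-Groth-group} and the faithfulness of the Fock representation---is sound and mirrors the paper's argument.
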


\begin{proof}
  The proofs of~\eqref{cat-eq:ind} and~\eqref{cat-eq:res} are identical to the proofs of the analogous statements in~\cite[Th.~3.18]{SY13} and are therefore omitted.
  \details{
  Suppose $M \in A_\mu\grm$, $N \in A_\lambda\grm$, $P \in A_\nu\grp$, $Q \in A_\rho\grp$, and $L \in A_\sigma\grm$.  Then we have
  \begin{align*}
    \Ind_M \circ \Ind_N (L) &= \Ind^{A_{\mu+\lambda+\sigma}}_{A_\mu \otimes A_{\lambda + \sigma}} \left( M \boxtimes \Ind^{A_{\lambda + \sigma}}_{A_\lambda \otimes A_\sigma} (N \boxtimes L) \right) \\
    &\cong \Ind^{A_{\mu+\sigma}}_{A_\mu \otimes A_{\lambda + \sigma}} \Ind^{A_\mu \otimes A_{\lambda + \sigma}}_{A_\mu \otimes A_\lambda \otimes A_\sigma} (M \boxtimes N \boxtimes L) \\
    &\cong \Ind^{A_{\mu+\lambda+\sigma}}_{A_\mu \otimes A_\lambda \otimes A_\sigma} (M \boxtimes N \boxtimes L) \\
    &\cong \Ind^{A_{\mu+\lambda+\sigma}}_{A_{\mu+\lambda} \otimes A_\sigma} \Ind^{A_{\mu+\lambda} \otimes A_\sigma}_{A_\mu \otimes A_\lambda \otimes A_\sigma} (M \boxtimes N \boxtimes L) \\
    &\cong \Ind^{A_{\mu+\lambda+\sigma}}_{A_{\mu+\lambda} \otimes A_\sigma} \left( \Ind^{A_{\mu+\lambda}}_{A_\mu \otimes A_\lambda} (M \boxtimes N) \boxtimes L \right) \\
    &\cong \Ind_{\nabla(M \boxtimes N)} L.
  \end{align*}
  Since each of the above isomorphisms is natural in $L$, this proves~\eqref{cat-eq:ind}.

  Similarly, we have
  \begin{align*}
    \Res_P \circ \Res_Q (L) &= \HOM_{A_\nu} (P, \Res^{A_{\sigma-\rho}}_{A_{\sigma-\rho-\nu} \otimes A_\nu} \HOM_{A_\rho} (Q, \Res^{A_\sigma}_{A_{\sigma-\rho} \otimes A_\rho} L)) \\
    &\cong \HOM_{A_\nu} (P, \HOM_{A_\rho} (Q, \Res^{A_\sigma}_{A_{\sigma-\nu-\rho} \otimes A_\nu \otimes A_\rho} L)) \\
    &\cong \HOM_{A_\nu \otimes A_\rho} (P \boxtimes Q, \Res^{A_\sigma}_{A_{\sigma-\nu-\rho} \otimes A_\nu \otimes A_\rho} L) \\
    &\cong \HOM_{A_\nu \otimes A_\rho} (P \boxtimes Q, \Res^{A_{\sigma-\nu-\rho} \otimes A_{\nu+\rho}}_{A_{\sigma-\nu-\rho} \otimes A_\nu \otimes A_\rho} \Res^{A_\sigma}_{A_{\sigma-\nu-\rho} \otimes A_{\nu+\rho}} L) \\
    &\cong \HOM_{A_{\nu+\rho}} (\Ind^{A_{\nu+\rho}}_{A_\nu \otimes A_\rho} (P \boxtimes Q),\Res^{A_\sigma}_{A_{\sigma-\nu-\rho} \otimes A_{\nu+\rho}} L) \\
    &\cong \Res_{\nabla(P \boxtimes Q)} L.
  \end{align*}
  Since each of the above isomorphisms is natural in $L$, this proves~\eqref{cat-eq:res}.
  }
  It remains to prove~\eqref{cat-eq:cross}.  For simplicity, in what follows we will use the notation $\Res^{\sigma+\nu}_{\sigma,\nu}$ for $\Res^{A_{\sigma+\nu}}_{A_\sigma \otimes A_\nu}$ and an analogous notation for induction functors.  We will also use a similar notation if the restriction or induction involves any number of tensor factors.  We have, for $P \in A_\nu\grp$, $M \in A_\mu\grm$, and $L \in A_\rho\grm$,
  \begin{align*}
    &\Res_P \circ \Ind_M (L) = \HOM_{A_\nu} \left(P, \Res^{\mu+\rho}_{\mu+\rho-\nu,\nu} \Ind^{\mu+\rho}_{\mu,\rho} (M \boxtimes L) \right) \\
    &\cong \ts \HOM_{A_\nu} \left( P, \bigoplus\limits_{\alpha+\beta=\nu} \Ind^{\mu+\rho-\nu, \nu}_{\mu-\alpha,\rho-\beta,\alpha,\beta} S_{23} \Res^{\mu,\rho}_{\mu-\alpha,\alpha,\rho-\beta,\beta} (M \boxtimes L) \right) \{\zeta',\zeta''\} \\
    &\cong \ts \HOM_{A_\nu} \left(P, \bigoplus\limits_{\alpha+\beta=\nu} \Ind^{\mu+\rho-\nu,\nu}_{\mu-\alpha,\rho-\beta,\nu} \Ind^{\mu-\alpha,\rho-\beta,\nu}_{\mu-\alpha,\rho-\beta,\alpha,\beta} S_{23} \Res^{\mu,\rho}_{\mu-\alpha,\alpha,\rho-\beta,\beta} (M \boxtimes L)\right) \{\zeta',\zeta''\} \\
    &\cong \ts \bigoplus\limits_{\alpha+\beta=\nu} \Ind^{\mu+\rho-\nu}_{\mu-\alpha,\rho-\beta} \HOM_{A_\nu} \left( P, \Ind^{\mu-\alpha,\rho-\beta,\nu}_{\mu-\alpha,\rho-\beta,\alpha,\beta} S_{23} \Res^{\mu,\rho}_{\mu-\alpha,\alpha,\rho-\beta,\beta} (M \boxtimes L)\right) \{\zeta',\zeta''\} \\
    &\cong \ts \bigoplus\limits_{\alpha+\beta=\nu} \Ind^{\mu+\rho-\nu}_{\mu-\alpha,\rho-\beta} \HOM_{A_\alpha \otimes A_\beta} \big(\Res^{\nu}_{\alpha,\beta} (P)\{-\kappa_\delta(\alpha,\beta),-\kappa_\sigma(\alpha,\beta)\},
    S_{23} \Res^{\mu,\rho}_{\mu-\alpha,\alpha,\rho-\beta,\beta} (M \boxtimes L)\big) \{\zeta',\zeta''\} \\
    &\cong \ts \bigoplus\limits_{\alpha+\beta=\nu} \Ind^{\mu+\rho-\nu}_{\mu-\alpha,\rho-\beta} \HOM_{A_\alpha \otimes A_\beta} \big( \Res^{\nu}_{\alpha,\beta} (P)\{-d\kappa(\alpha,\beta),-\epsilon\kappa(\alpha,\beta)\},
    S_{23} \Res^{\mu,\rho}_{\mu-\alpha,\alpha,\rho-\beta,\beta} (M \boxtimes L)\big) \{\zeta',\zeta''\} \\
    &\cong \nabla \Res^\dagger_{\Delta(P)} (M \boxtimes L),
  \end{align*}
  where
  \[
    \zeta' = d\chi'(\alpha,\rho-\beta) + d\chi''(\mu-\alpha,\beta),\quad \zeta'' = \epsilon\chi'(\alpha,\rho-\beta) + \epsilon\chi''(\mu-\alpha,\beta).
  \]
  The first isomorphism above follows from~\eqref{eq:strong-isom}.  In the fourth isomorphism, we use the fact that induction is conjugate shifted right adjoint to restriction and that the tower is dualizing.  Since all of the above isomorphisms are natural in $L$,
  this proves~\eqref{cat-eq:cross}.

  The final assertion of the theorem follows as explained in the paragraph preceding the statement of the theorem.  In particular, the isomorphisms~\eqref{cat-eq:ind} and~\eqref{cat-eq:res} categorify the multiplication in $\cG(A)$ and $\cK(A)$, respectively, and the isomorphism~\eqref{cat-eq:cross} categorifies the relation~\eqref{eq:fh-key-relation-action}.
\end{proof}

\begin{cor} \label{cor:Heis-double-cat}
  Let $\mathcal{H}$ be the full subcategory of $\End(A\grm)$ generated by $\Ind_M$, $M \in A\grm$, and $\Res_P$, $P \in A\grp$, under composition and degree shift.  Then $\cK_0(\mathcal{H}) \cong \fh(A)$.
\end{cor}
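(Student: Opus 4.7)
The plan is to construct a $\kk$-linear bijection
\[
\Psi \colon \fh(A) \to \cK_0(\mathcal{H}),\quad [M]_+ \# [P]_- \mapsto [\Ind_M \circ \Res_P],
\]
using the identification $\fh(A) \cong H^+ \otimes_\kk H^-$ of $\kk$-modules from Definition~\ref{def:h}, and then to verify that it is a ring homomorphism. First I would check that the two assignments $[M]_+ \mapsto [\Ind_M]$ from $\cG(A)$ and $[P]_- \mapsto [\Res_P]$ from $\cK(A)$ descend to well-defined maps into $\cK_0(\mathcal{H})$. Each short exact sequence in $A\grm$ yields a short exact sequence of functors $\Ind_{(-)}$ thanks to the projectivity in axiom~\ref{item:TA3}, while short exact sequences in $A\grp$ automatically split; composing the two maps via the multiplication in $\cK_0(\mathcal{H})$ then produces a well-defined $\kk$-linear $\Psi$.

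Surjectivity of $\Psi$ and the ring homomorphism property both follow from the same normal-form argument supplied by Theorem~\ref{theo:categorification}. Any object of $\mathcal{H}$ is by construction a composition of factors $\Ind_{M_i}$, $\Res_{P_j}$, and grading shifts. Iterated application of~\eqref{cat-eq:cross} commutes each $\Res$ past each $\Ind$, at the cost of a direct sum indexed by the coproduct $\Delta(P)$ together with certain grading shifts, and~\eqref{cat-eq:ind}, \eqref{cat-eq:res} then fuse adjacent inductions and adjacent restrictions. This rewrites every class in $\cK_0(\mathcal{H})$ as a $\kk$-linear combination of classes $[\Ind_M \circ \Res_P]$, proving surjectivity. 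Moreover, the shifts in the definition of $\Res^\dagger_{\Delta(P)}$ were chosen to match the scalar factors appearing in the smash product formula~\eqref{eq:smash-product} and the commutation relation~\eqref{eq:fh-key-relation-action}, so the normal form produced by the rewrite on the categorical side coincides, class by class, with the result of~\eqref{eq:smash-product} on the algebraic side. Hence $\Psi$ is multiplicative.

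Finally, injectivity follows from the faithfulness of the Fock space representation, \cite[Th.~4.3(d)]{RS14}. The action of $\mathcal{H}$ on $A\grm$ descends to a ring homomorphism $\cK_0(\mathcal{H}) \to \End_\kk(\cG(A))$ via passage to Grothendieck groups, and by Proposition~\ref{prop:cat-Groth-group} the composition $\fh(A) \xrightarrow{\Psi} \cK_0(\mathcal{H}) \to \End_\kk(\cG(A))$ is precisely the Fock space action of $\fh(A)$ on $\cG(A)$. Since this action is injective, so is $\Psi$, completing the proof. The main obstacle will be the bookkeeping in the ring homomorphism check: tracking grading shifts, sign factors, and the biadditive maps $\chi$ and $\kappa$ carefully enough that the normal form produced by~\eqref{cat-eq:cross} matches term by term with~\eqref{eq:smash-product}; the remaining steps are essentially formal consequences of Theorem~\ref{theo:categorification} and Proposition~\ref{prop:cat-Groth-group}.
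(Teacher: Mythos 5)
Your proof is correct and follows the same surjection-then-injectivity structure as the paper, but you supply more of the supporting detail and you justify injectivity by a slightly different mechanism. The paper's proof is two sentences: Theorem~\ref{theo:categorification} yields a surjective map $\fh(A) \twoheadrightarrow \cK_0(\mathcal{H})$ sending $[M]\#[P]$ to $[\Ind_M \circ \Res_P]$, and injectivity is a consequence of~\cite[Th.~4.3(a)]{RS14}. Your well-definedness and normal-form arguments make explicit what the paper leaves implicit (that exactness from~\ref{item:TA3} lets the assignment descend to $\cK_0$, and that~\eqref{cat-eq:cross}, \eqref{cat-eq:ind}, \eqref{cat-eq:res} let one rewrite any composite into the span of the $[\Ind_M\circ\Res_P]$), which is worthwhile since the paper's ``we have a surjective map'' is stated without proof. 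For injectivity you instead factor through the Fock space action: the map $\cK_0(\mathcal{H}) \to \End_\kk(\cG(A))$ given by passing each exact endofunctor to its induced map on Grothendieck groups precomposes with $\Psi$ to give exactly the Fock space representation of $\fh(A)$ (this is Proposition~\ref{prop:cat-Groth-group}), which is faithful by~\cite[Th.~4.3(d)]{RS14}. This route is valid and has the small advantage of relying only on facts already quoted in the body of this paper, whereas the citation of~\cite[Th.~4.3(a)]{RS14} requires the reader to consult the external source. The content is the same --- injectivity ultimately traces back to faithfulness of the Fock space action in either case --- but your argument makes the chain visible.
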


\begin{proof}
  By Theorem~\ref{theo:categorification}, we have a surjective map
  \[
    \fh(A) \twoheadrightarrow \cK_0(\mathcal{H}),\quad [M] \# [P] \mapsto [\Ind_M] \circ [\Res_P],\quad M \in A\grm,\ P \in A\grp.
  \]
  By~\cite[Th.~4.3(a)]{RS14}, the kernel of this map must be zero, hence the map is also injective.
\end{proof}

Note that the categorifications of Theorem~\ref{theo:categorification} and~Corollary~\ref{cor:Heis-double-cat} do not rely on a particular presentation of the twisted Heisenberg double $\fh(A)$.

%%%%%%%%%%%%%%%%%%%%%%%%%%%%%%%%%%%%%%%%%%%%%%%%%%%%
%
\section{Frobenius graded superalgebras} \label{sec:frob-gr-alg}
%
%%%%%%%%%%%%%%%%%%%%%%%%%%%%%%%%%%%%%%%%%%%%%%%%%%%%

Many existing constructions in categorification involve Frobenius (super)algebras.  In this section, we show that towers of Frobenius graded superalgebras automatically satisfy some of the axioms of a strong tower.  In particular, induction is always conjugate shifted right adjoint to restriction (see Proposition~\ref{prop:Frobenius-twisted}).  Our definition of a Frobenius superalgebra is more general that the definition typically appearing in the literature (those definitions require $\sigma=0$ in Definition~\ref{def:frobenius} below).  In addition, we consider the graded version of these algebras.

\begin{defin}[Frobenius graded superalgebra] \label{def:frobenius}
  Suppose $\delta \in \Z$ and $\sigma \in \Z_2$.  We say that a finite-dimensional graded superalgebra $B$ is a \emph{Frobenius graded superalgebra of degree $(-\delta,\sigma)$} if one of the following three equivalent conditions holds:
  \begin{enumerate}
    \item \label{def-item:Frob-dual} There is an isomorphism (homogeneous of degree zero) of graded left $B$-supermodules
        \[
      	 \varphi \colon B \to B^\vee\{\delta,\sigma\} = \HOM_\F(B,\F)\{\delta,\sigma\} = \HOM_\F(B,\F\{\delta,\sigma\}).
        \]
        Here $B^\vee$ is a graded left $B$-supermodule as in \eqref{eq:left-hom-action}.

    \item \label{def-item:Frob-form} There exists a nondegenerate invariant graded $\F$-bilinear form $(-,-) \colon B\times B\to \F\{\delta,\sigma\}$.  By \emph{invariant}, we mean that
        \[
          (ab,c)=(a,bc) \quad \text{for all } a,b,c\in B.
        \]
        By \emph{graded}, we mean that $(B_{\lambda,\epsilon}, B_{\lambda',\epsilon'}) \subseteq \F\{\delta,\sigma\}_{\lambda+\lambda',\epsilon+\epsilon'}$.  In other words, $B_{\lambda,\epsilon} \perp B_{\lambda',\epsilon'}$ unless $\lambda + \lambda' = \delta$ and $\epsilon + \epsilon' = \sigma$, since we view $\F$ as lying in degree zero.

    \item \label{def-item:Frob-functional} There exists an $\F$-linear graded (i.e.\ homogeneous of degree zero) map $\tr \colon B \to \F\{\delta,\sigma\}$, called the \emph{trace map}, such that the kernel of $\tr$ contains no nonzero left ideals of $B$.
  \end{enumerate}
\end{defin}

The relationship between the various structures in Definition~\ref{def:frobenius} is as follows:  For $a,b \in B$, we have
\[
  \varphi(b)(a) = (-1)^{\bar a \bar b} (a,b),\quad \tr(b) = (b,1) = (1,b),\quad (a,b) = \tr(ab).
\]
The proof that the conditions in Definition~\ref{def:frobenius} are equivalent is a straightforward generalization of the analogous fact for Frobenius algebras.

\details{
  Suppose~\eqref{def-item:Frob-dual} is true.  For each $b\in B$, define $\varphi_b:=\varphi(b)\in\HOM_\F(B,\F)\{\delta,\sigma\}$. Then define a bilinear form on $B$ by $(a,b):=(-1)^{\bar{a}\bar{b}}\varphi_b(a)$ for $a,b\in B$. This bilinear form is nondegenerate because $\varphi$ is injective. Since $\varphi$ is an isomorphism of graded left $B$-supermodules, we have that
  \[
    \varphi_{bc}=\varphi_{b\cdot c} = b\cdot \varphi_c= (-1)^{\bar{b}\bar{c}}\varphi_c\circ\pr{b},\quad b,c \in B.
  \]
  Hence, for $a,b,c\in B$,
  \begin{align*}
    (a,bc)&=(-1)^{\bar{a}\bar{b}+\bar{a}\bar{c}}\varphi_{bc}(a)\\
    &=(-1)^{\bar{a}\bar{b}+\bar{a}\bar{c}+\bar{b}\bar{c}}(\varphi_c\circ\pr{b})(a) \\ &=(-1)^{\bar{a}\bar{b}+\bar{a}\bar{c}+\bar{b}\bar{c}}\varphi_c((-1)^{\bar{a}\bar{b}}ab) \\
    &=(-1)^{\bar{a}\bar{c}+\bar{b}\bar{c}}\varphi_c(ab)\\
    &=(ab,c)
 \end{align*}
  The fact that $\varphi$ is homogeneous of degree zero implies that $(-,-)$ is graded.

  Now assume that~\eqref{def-item:Frob-form} holds.  Define, for homogeneous $b \in B$, the operator $\varphi(b)\in\HOM_\F(B,\F\{\delta,\sigma\}) = \HOM_\F(B,\F)\{\delta,\sigma\}$ by the formula
  \[
    \varphi(b)(a)=(-1)^{\bar{a}\bar{b}}(a,b)\quad\text{ for all homogeneous }a\in B.
  \]
  Then, by reversing the arguments above, we get that~\eqref{def-item:Frob-dual} holds. The invariance of the bilinear form and the fact that $(-,-)$ is graded imply that $\varphi$ is a degree zero map of graded supermodules. The nondegeneracy of the bilinear form implies that $\varphi$ is injective, hence it is a bijection because both sides have the same dimension as vector spaces over $\F$.  Thus~\eqref{def-item:Frob-dual} is true.

  It remains to prove that~\eqref{def-item:Frob-form} and~\eqref{def-item:Frob-functional} are equivalent. To a nondegenerate invariant $\F$-bilinear form $(-,-)$, we associate the linear functional $\tr$, given by $\tr(b)=(b,1)=(1,b)$, whose kernel contains no nonzero left ideals.  Similarly, to the linear functional $\tr$ with this property, we associate the nondegenerate invariant $\F$-bilinear form $(-,-)$ given by $(a,b) = \tr(ab)$.  (See, for example,~\cite[Lem.~2.2.4]{Koc04}.)  It is straightforward to check that $\tr$ is graded if and only if $(-,-)$ is.
}

\begin{rem}
  Note that the choice of $\delta$ in Definition~\ref{def:frobenius} is unique, since it is the maximal value of $\delta$ such that $B_{\delta,\epsilon} \ne 0$ for some $\epsilon \in \Z_2$.  However, the $\Z_2$-degree ($\sigma$ in Definition~\ref{def:frobenius}) of $B$ is not necessarily unique.  In what follows, when we say that $B$ is a Frobenius graded superalgebra of degree $(-\delta,\sigma)$, we assume that we have fixed a choice of trace map (equivalently, isomorphism $\varphi$ or bilinear form) satisfying the conditions in Definition~\ref{def:frobenius} for that degree.
\end{rem}

\begin{lem}[Nakayama automorphism] \label{lem:Nakayama-def}
  If $B$ is a Frobenius graded superalgebra, then there exists an automorphism $\psi \colon B \to B$ of graded superalgebras such that $(a,b)=(-1)^{\bar{a}\bar{b}}(b,\psi(a))$ for all homogeneous $a,b\in B$.  This automorphism is called the \emph{Nakayama automorphism} of $B$.
\end{lem}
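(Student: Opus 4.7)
The plan is to construct $\psi$ pointwise from the nondegeneracy of the bilinear form in part~\eqref{def-item:Frob-form} of Definition~\ref{def:frobenius}, then verify algebraically that it is a graded superalgebra automorphism. For each homogeneous $a \in B$, the assignment $b \mapsto (-1)^{\bar a \bar b}(a,b)$ is a homogeneous $\F$-linear functional on $B$, and by nondegeneracy of $(-,-)$ there is a unique element $\psi(a) \in B$ such that
\[
  (b, \psi(a)) = (-1)^{\bar a \bar b}(a,b) \quad \text{for all homogeneous } b \in B.
\]
Extending by linearity gives an $\F$-linear map $\psi \colon B \to B$. Since $(-,-)$ is graded (pairing $B_{n,\epsilon}$ nontrivially only with $B_{\delta-n,\sigma+\epsilon}$), unfolding the defining relation shows that $\psi$ preserves both the $\Z$- and $\Z_2$-gradings; in particular $\overline{\psi(a)} = \bar a$.

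Next I would verify $\psi(1_B)=1_B$ and multiplicativity. The unit case follows because $(c,\psi(1))=(1,c)=(c,1)$ for all $c$, so $\psi(1)=1_B$ by nondegeneracy. The main content is the identity $\psi(ab)=\psi(a)\psi(b)$ for homogeneous $a,b$; I would prove it by computing $(c,\psi(ab))$ for arbitrary homogeneous $c \in B$ through successive applications of the definition of $\psi$ and invariance of $(-,-)$:
\begin{align*}
  (c, \psi(ab)) &= (-1)^{(\bar a + \bar b)\bar c}(ab, c) = (-1)^{(\bar a + \bar b)\bar c}(a, bc) \\
  &= (-1)^{(\bar a + \bar b)\bar c + \bar a(\bar b + \bar c)}(bc, \psi(a)) = (-1)^{\bar a \bar b + \bar b \bar c}(b, c\psi(a)) \\
  &= (-1)^{\bar a \bar b + \bar b \bar c + \bar b(\bar a + \bar c)}(c\psi(a), \psi(b)) = (c, \psi(a)\psi(b)).
\end{align*}
The main obstacle is the sign bookkeeping in this chain, but every cross-term appears twice and therefore cancels modulo~$2$; nondegeneracy of $(-,-)$ then yields $\psi(ab)=\psi(a)\psi(b)$.

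Finally I would observe that $\psi$ is an automorphism. If $\psi(a)=0$, then $(-1)^{\bar a \bar b}(a,b) = (b,\psi(a)) = 0$ for all homogeneous $b$, forcing $a=0$ by nondegeneracy; hence $\psi$ is injective, and since $B$ is finite-dimensional, this injective graded superalgebra endomorphism is automatically a graded superalgebra automorphism.
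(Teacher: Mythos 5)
Your proposal is correct and follows essentially the same route as the paper's proof: define $\psi(a)$ via nondegeneracy of the form, observe grading preservation from gradedness of the form, and verify multiplicativity by a sign-bookkeeping chain of invariance and the defining relation. The only differences are cosmetic (you add an explicit check that $\psi(1)=1$ and you spell out the injectivity-plus-finite-dimension argument that the paper leaves as "straightforward to verify").
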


The following proof is a rather straightforward generalization of the proof of the existence of the Nakayama automorphism in the non-graded, non-super setting (see, for example, \cite[\S16E]{Lam99}).

\begin{proof}
  For a fixed $a \in B$, the map $b \mapsto (-1)^{\bar a \bar b}(a,b)$ is an $\F$-linear functional on $B$.  Thus, it is of the form $b \mapsto (b,\psi(a))$ for some $\psi(a) \in B$.  It is straightforward to verify that the map $\psi \colon B \to B$ thus defined is an automorphism of $\F$-vector spaces.  Now, suppose $B$ is of degree $(-\delta,\sigma)$.  For $\lambda,\lambda' \in \Z$ and $\epsilon,\epsilon' \in \Z_2$, we have
  \[
    (B_{\lambda',\epsilon'}, \psi(B_{\lambda,\epsilon})) = (B_{\lambda,\epsilon}, B_{\lambda',\epsilon'}) = 0 \quad \text{if } \lambda + \lambda' \ne \delta \text{ or } \epsilon + \epsilon' \ne \sigma.
  \]
  Thus $\psi(B_{\lambda,\epsilon}) \subseteq B_{\lambda,\epsilon}$, and so $\psi$ is a map of $(\Z,\Z_2)$-graded vector spaces.  Finally, for all homogeneous $a,b,c\in B$, we have
  \begin{multline*} (b,\psi(ac))=(-1)^{\bar{a}\bar{b}+\bar{c}\bar{b}}(ac,b)
    =(-1)^{\bar{a}\bar{b}+\bar{c}\bar{b}}(a,cb)
    =(-1)^{\bar{c}\bar{b}+\bar{a}\bar{c}}(cb,\psi(a)) \\
    =(-1)^{\bar{c}\bar{b}+\bar{a}\bar{c}}(c,b\psi(a))
    =(b\psi(a),\psi(c)) = (b,\psi(a)\psi(c)).
  \end{multline*}
  Hence $\psi(ac)=\psi(a)\psi(c)$ by the nondegeneracy of the bilinear form, and so $\psi$ is an automorphism of graded superalgebras.
\end{proof}

\begin{lem} \label{lem:Frobenius-alg-tensor-prod}
  For $i=1,2$, let $B_i$ be a Frobenius graded superalgebra of degree $(-\delta_i,\sigma_i)$, with trace map $\tr_i$ and Nakayama automorphism $\psi_i$.  Then $B_1 \otimes B_2$ is a Frobenius graded superalgebra of degree $(-\delta_1 - \delta_2, \sigma_1 + \sigma_2)$, with trace map $\tr_1 \otimes \tr_2$ and Nakayama automorphism $\psi_1 \otimes \psi_2$. The invariant bilinear form on  $B_1\otimes B_2$ is defined in terms of the invariant bilinear forms on $B_1$ and $B_2$ by
  \[
    (b_1\otimes b_2,c_1\otimes c_2):=(-1)^{\bar b_2 \bar c_1}(b_1,c_1)(b_2,c_2),
  \]
  for all homogeneous $b_1,c_1 \in B_1$, $b_2,c_2 \in B_2$.
\end{lem}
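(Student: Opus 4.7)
The plan is to verify that $B_1 \otimes B_2$ is Frobenius by using criterion~\eqref{def-item:Frob-form} of Definition~\ref{def:frobenius}: exhibit a nondegenerate invariant graded bilinear form of the prescribed degree, and then recover the trace map and Nakayama automorphism from it.

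First I would check that the proposed form $(b_1 \otimes b_2, c_1 \otimes c_2) = (-1)^{\bar b_2 \bar c_1}(b_1, c_1)(b_2, c_2)$ is graded of degree $(-\delta_1 - \delta_2, \sigma_1 + \sigma_2)$ and nondegenerate. The gradedness is immediate from the gradedness of the two individual forms, since a product of homogeneous terms vanishes unless each factor is in the correct complementary bidegree. Nondegeneracy follows because the form is, up to the unit scalar $(-1)^{\bar b_2 \bar c_1}$, literally the tensor product of two nondegenerate bilinear forms on finite-dimensional graded vector spaces; more concretely, if $\{b_1^{(i)}\}$ and $\{b_2^{(j)}\}$ are bases of $B_1, B_2$ with dual bases $\{c_1^{(i)}\}, \{c_2^{(j)}\}$ under $(-,-)_1$ and $(-,-)_2$, then one checks directly that $\{b_1^{(i)} \otimes b_2^{(j)}\}$ has a dual basis under $(-,-)$ (up to signs).

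Next I would verify invariance, which is the main sign-tracking step. Using the tensor-product multiplication $(a_1 \otimes a_2)(b_1 \otimes b_2) = (-1)^{\bar a_2 \bar b_1} a_1 b_1 \otimes a_2 b_2$, one computes
\begin{align*}
  \bigl((a_1 \otimes a_2)(b_1 \otimes b_2), c_1 \otimes c_2 \bigr)
    &= (-1)^{\bar a_2 \bar b_1 + (\bar a_2 + \bar b_2)\bar c_1}(a_1 b_1, c_1)(a_2 b_2, c_2),\\
  \bigl(a_1 \otimes a_2, (b_1 \otimes b_2)(c_1 \otimes c_2)\bigr)
    &= (-1)^{\bar b_2 \bar c_1 + \bar a_2(\bar b_1 + \bar c_1)}(a_1, b_1 c_1)(a_2, b_2 c_2).
\end{align*}
Applying the invariance of $(-,-)_1$ and $(-,-)_2$ to the right-hand side of the first line and comparing exponents modulo $2$ gives equality; this is the step where the sign $(-1)^{\bar b_2 \bar c_1}$ in the definition is forced upon us.

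Finally I would identify the trace and Nakayama automorphism. Evaluating $(b_1 \otimes b_2, 1 \otimes 1)$ directly gives $\tr_1(b_1)\tr_2(b_2) = (\tr_1 \otimes \tr_2)(b_1 \otimes b_2)$. For the Nakayama automorphism, I would compute both sides of the identity that characterizes it (Lemma~\ref{lem:Nakayama-def}), using $(b_i, \psi_i(a_i)) = (-1)^{\bar a_i \bar b_i}(a_i, b_i)$. A short calculation shows the total sign on the right collapses (modulo $2$) to $(-1)^{\bar a_2 \bar b_1}$, which matches the left-hand side $(a_1 \otimes a_2, b_1 \otimes b_2) = (-1)^{\bar a_2 \bar b_1}(a_1, b_1)(a_2, b_2)$; thus $\psi_1 \otimes \psi_2$ satisfies the defining property of the Nakayama automorphism, and it is an automorphism of graded superalgebras since each $\psi_i$ is. The only real obstacle throughout is the careful bookkeeping of Koszul signs, but every step reduces to a cancellation modulo $2$.
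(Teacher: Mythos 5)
Your proof is correct and follows essentially the same route as the paper's (hidden) details: exhibit the bilinear form, check gradedness and nondegeneracy, verify invariance by Koszul-sign bookkeeping, and read off the trace map and Nakayama automorphism by direct computation. The sign computations all check out.
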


\begin{proof}
  The straightforward proof of this lemma is left to the reader.
  \details{
    Define the bilinear form as above.  It is clearly graded of degree $(-\delta_1 - \delta_2, \sigma_1 + \sigma_2)$. To check that it is invariant, we have, for all homogeneous $a_1,b_1,c_1\in B_1$ and $a_2,b_2,c_2\in B_2$,
    \begin{align*}
      ((b_1\otimes b_2)(a_1\otimes a_2),c_1\otimes c_2) &= (-1)^{\bar b_2 \bar a_1}(b_1a_1\otimes b_2a_2,c_1\otimes c_2) \\
      &=(-1)^{\bar b_2 \bar a_1 + \bar b_2 \bar c_1 + \bar a_2 \bar c_1}(b_1a_1,c_1)(b_2a_2,c_2) \\
      &=(-1)^{\bar b_2 \bar a_1 + \bar b_2 \bar c_1 + \bar a_2 \bar c_1}(b_1,a_1c_1)(b_2,a_2c_2) \\
      &=(-1)^{\bar a_2 \bar c_1}(b_1\otimes b_2,a_1c_1\otimes a_2c_2) \\
      &=(b_1\otimes b_2,(a_1\otimes a_2)(c_1\otimes c_2)).
    \end{align*}
    Now, the trace form on $B_1\otimes B_2$ is defined by
    \[
      \tr_{12}(b_1\otimes b_2) = (b_1\otimes b_2,1_{B_1}\otimes 1_{B_2}) = (-1)^{\bar b_2 \cdot 0}(b_1,1_{B_1})(b_2,1_{B_2}) = \tr_1(b_1)\tr_2(b_2).
    \]
    Hence $\tr_{12}=\tr_1\otimes\tr_2$.
    For the Nakayama automorphism, we have, for all homogeneous $a_1,b_1\in B_1$ and $a_2,b_2\in B_2$,
    \begin{align*}
      (a_1\otimes a_2,b_1\otimes b_2) &=(-1)^{\bar a_2 \bar b_1}(a_1,b_1)(a_2,b_2) \\
      &=(-1)^{\bar a_2 \bar b_1 + \bar a_1 \bar b_1 + \bar a_2 \bar b_2}(b_1,\psi_1(a_1))(b_2,\psi_2(a_2)) \\
      &=(-1)^{\bar a_2 \bar b_1 + \bar a_1 \bar b_1 + \bar a_2 \bar b_2 + \bar a_1 \bar b_2}(b_1\otimes b_2,\psi_1(a_1)\otimes \psi_2(a_2)) \\
      &=(-1)^{(\bar a_1 + \bar a_2)(\bar b_1 + \bar b_2)}(b_1\otimes b_2,\psi_1(a_1)\otimes \psi_2(a_2)).
    \end{align*}
    It follows that $\psi_1\otimes\psi_2$ is the Nakayama automorphism of $B_1\otimes B_2$.
  }
\end{proof}

\begin{lem} \label{lem:dual-module-isom}
  If $B$ is a Frobenius graded superalgebra of degree $(-\delta,\sigma)$, then we have an isomorphism of graded $(B,B)$-superbimodules
  \[
    B^\psi \cong B^\vee\{\delta,\sigma\}.
  \]
\end{lem}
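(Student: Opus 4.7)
The plan is to show that the isomorphism $\varphi \colon B \to B^\vee\{\delta,\sigma\}$ of graded left $B$-supermodules provided by Definition~\ref{def:frobenius}\eqref{def-item:Frob-dual} is already the desired bimodule isomorphism, provided we twist the right $B$-action on the source by the Nakayama automorphism $\psi$. Concretely, I would work with the explicit formula $\varphi(b)(a) = (-1)^{\bar a \bar b}(a,b)$ that came out of proving the equivalence of conditions~\eqref{def-item:Frob-dual} and~\eqref{def-item:Frob-form} of Definition~\ref{def:frobenius}.

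The step that does all the work is checking right-linearity. By~\eqref{eq:left-module-dual-right-action}, the right action on $B^\vee$ is $(\varphi(b)\cdot c)(a) = \varphi(b)(ca)$, while by~\eqref{eq:right-twist} the right action on $B^\psi$ is $b \cdot c = b \psi(c)$. So I need to verify
\[
  \varphi(b\psi(c))(a) = (\varphi(b) \cdot c)(a) \quad \text{for all homogeneous } a,b,c \in B.
\]
Unwinding both sides with the explicit formula for $\varphi$, this reduces to the sign identity
\[
  (-1)^{\bar a \bar c}(a, b\psi(c)) = (-1)^{\bar b \bar c}(ca, b),
\]
which I would derive by combining the invariance of the bilinear form, $(ca,b) = (c, ab)$, with the Nakayama identity $(c, ab) = (-1)^{\bar c(\bar a+\bar b)}(ab, \psi(c))$ from Lemma~\ref{lem:Nakayama-def}, and finally invariance again to move $a$ across the product. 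All signs combine cleanly since $2\bar b \bar c \equiv 0$.

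Left-linearity of $\varphi$ is already known from Definition~\ref{def:frobenius}, and $\varphi$ is bijective and homogeneous of degree zero as a map to $B^\vee\{\delta,\sigma\}$ (so in particular it respects the $\Z \times \Z_2$-grading after the shift). Hence once right-linearity is verified, $\varphi$ is the required isomorphism $B^\psi \xrightarrow{\sim} B^\vee\{\delta,\sigma\}$ of graded $(B,B)$-superbimodules. I do not anticipate any real obstacle beyond careful sign bookkeeping; the only subtlety is making sure the sign conventions in~\eqref{eq:BA-hom-bimod} and~\eqref{eq:right-twist} are matched correctly with the sign in the formula $\varphi(b)(a) = (-1)^{\bar a \bar b}(a,b)$.
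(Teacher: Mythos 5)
Your proposal is correct and essentially identical to the paper's proof: both take the left-module isomorphism $\varphi$ from Definition~\ref{def:frobenius}\eqref{def-item:Frob-dual} and verify right-linearity by an explicit computation combining the formula $\varphi(b)(a)=(-1)^{\bar a\bar b}(a,b)$, invariance of the form, and the Nakayama identity. The only cosmetic difference is the naming of the dummy variables in the chain of equalities.
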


\begin{proof}
  By Definition \ref{def:frobenius}\eqref{def-item:Frob-dual} we have an isomorphism $\varphi \colon B \to B^\vee\{\delta,\sigma\}$ of graded left $B$-supermodules.  It remains to show that, under the isomorphism $\varphi$, the right $B$-action on $B^\vee\{\delta,\sigma\}$ defined by~\eqref{eq:left-module-dual-right-action} corresponds to the right $B$-action on $B$ twisted by the Nakayama automorphism.   For homogeneous $a,b,c \in B$, we have
  \begin{multline*}
    (\varphi(b) \cdot a)(c) = (\varphi(b) \circ \pL{a})(c)
    = \varphi(b)(ac)
    = (-1)^{\bar a \bar b + \bar b \bar c} (ac,b)
    = (-1)^{\bar a \bar b + \bar b \bar c} (a,cb) \\
    = (-1)^{\bar a \bar c + \bar b \bar c} (cb,\psi(a))
    = (-1)^{\bar a \bar c + \bar b \bar c} (c,b\psi(a))
    = \varphi(b\psi(a))(c).
  \end{multline*}
  Thus, $\varphi(b) \cdot a = \varphi(b \psi(a))$.
\end{proof}

Suppose $\tau$ is an automorphism of $B$.  Then, if $M$ is a graded left (resp.\ right) $B$-supermodule, it is straightforward to verify that
\begin{equation} \label{eq:tau-duality}
  (^\tau M)^\vee \cong (M^\vee)^\tau \qquad (\text{resp.\ } (M^\tau)^\vee \cong {^\tau(M^\vee)})
\end{equation}
as right (resp.\ left) graded $B$-supermodules.
\details{
  Suppose $M$ is a graded left $B$-supermodule.  We have $(^\tau M)^\vee =\HOM_\F(^\tau M,\F)$, with the right action given by~\eqref{eq:left-module-dual-right-action}. Hence, for $f\in\HOM_\F(^\tau M,\F)$, $b \in B$ and $m \in M$, we have
  \[
    (f\cdot b)(m) = f(b\cdot m)=f(\tau(b)m)=(f\circ\pL{\tau(b)})(m)=(f\tau(b))(m).
  \]
  This proves that $\HOM_\F(^\tau M,\F)\cong \HOM_\F(M,\F)^\tau$, which is what we wanted. The proof for right supermodules is similar.
}
Note also that, if $M$ is a graded left $B$-supermodule and $N$ is a graded right $B$-supermodule, then we have an isomorphism
\begin{equation} \label{eq:twist-across-tensor-prod}
  N^\tau \otimes_B M \cong N \otimes_B {^{\tau^{-1}} M}.
\end{equation}

\begin{lem} \label{lem:rickard}
  For $i=1,2$, let $B_i$ be a Frobenius graded superalgebra of degree $(-\delta_i,\sigma_i)$, with Nakayama automorphism $\psi_i$.  Let $M$ be a finite-dimensional graded $(B_1,B_2)$-superbimodule that is projective as a left $B_1$-supermodule and also as a right $B_2$-supermodule.  Then the functor
  \[
  	M\otimes_{B_2} - \colon B_2\grm \to B_1\grm
  \]
  has a right adjoint functor
  \[
  	(M^{\vee})^{\psi_1^{-1}}\{\delta_1,\sigma_1\} \otimes_{B_1} - \colon B_1\grm\to B_2\grm,
  \]
  and a left adjoint functor
  \[
  	^{\psi_2}(M^{\vee})\{\delta_2,\sigma_2\} \otimes_{B_1} - \colon B_1\grm\to B_2\grm.
  \]
\end{lem}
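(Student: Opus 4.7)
The plan is to combine standard tensor--hom adjunctions with the Frobenius bimodule isomorphism of Lemma~\ref{lem:dual-module-isom} to reinterpret hom functors as tensor functors.

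For the right adjoint, I would start with the standard tensor--hom adjunction for the $(B_1,B_2)$-bimodule $M$: the functor $M \otimes_{B_2} -$ has right adjoint $\HOM_{B_1}(M,-)$. Using that $M$ is finitely generated projective as a left $B_1$-supermodule, there is a natural isomorphism $\HOM_{B_1}(M,N) \cong \HOM_{B_1}(M,B_1) \otimes_{B_1} N$ of $B_2$-supermodules, so the task reduces to identifying the $(B_2,B_1)$-bimodule $\HOM_{B_1}(M,B_1)$. Twisting the right action on both sides of Lemma~\ref{lem:dual-module-isom} by $\psi_1^{-1}$ gives the rewriting $B_1 \cong (B_1^\vee)^{\psi_1^{-1}}\{\delta_1,\sigma_1\}$ of $(B_1,B_1)$-bimodules; this twist and shift pass out through $\HOM_{B_1}(M,-)$, yielding $\HOM_{B_1}(M,B_1) \cong \HOM_{B_1}(M,B_1^\vee)^{\psi_1^{-1}}\{\delta_1,\sigma_1\}$. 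A currying argument based on $B_1 \otimes_{B_1} M \cong M$ then identifies $\HOM_{B_1}(M,B_1^\vee) \cong M^\vee$ as $(B_2,B_1)$-bimodules, completing the identification.

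For the left adjoint, I would prove that ${}^{\psi_2}(M^\vee)\{\delta_2,\sigma_2\} \otimes_{B_1} -$ is left adjoint to $M \otimes_{B_2} -$ by constructing, naturally in $Y$, an isomorphism of left $B_1$-supermodules
\[
  \HOM_{B_2}\bigl({}^{\psi_2}(M^\vee)\{\delta_2,\sigma_2\},\, Y\bigr) \cong M \otimes_{B_2} Y,
\]
from which the adjunction follows by another application of tensor--hom. The first ingredient is a left-twisted analogue of Lemma~\ref{lem:dual-module-isom}, namely ${}^{\psi_2^{-1}}B_2 \cong B_2^\vee\{\delta_2,\sigma_2\}$, or equivalently $B_2 \cong {}^{\psi_2}(B_2^\vee)\{\delta_2,\sigma_2\}$, which I would establish by an argument parallel to the proof of Lemma~\ref{lem:dual-module-isom} using the variant map $\varphi'(b)(a) := (b,a)$. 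The second ingredient is the standard isomorphism $M \otimes_{B_2} Y \cong \HOM_{B_2}(M^*, Y)$, where $M^*$ is the right $B_2$-linear dual of $M$; this is valid because $M$ is finitely generated projective as a right $B_2$-supermodule. One then identifies $M^* \cong {}^{\psi_2}(M^\vee)\{\delta_2,\sigma_2\}$ as $(B_2,B_1)$-bimodules by the same type of computation as in the right-adjoint case, using the left-twisted Frobenius isomorphism just derived.

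The main obstacle I expect is careful bookkeeping: one must track all bimodule structures, the direction of the Nakayama twists ($\psi_i$ versus $\psi_i^{-1}$, left versus right), and the degree/parity shifts across a sequence of natural isomorphisms. Establishing the left-twisted variant of Lemma~\ref{lem:dual-module-isom} and verifying that all Koszul signs behave correctly under the dualization functors \eqref{eq:BA-hom-bimod} is the step most prone to error; the rest of the argument is essentially a formal manipulation of adjunctions and the transport of twists across $\HOM$.
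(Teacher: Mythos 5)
Your treatment of the right adjoint is essentially the paper's argument: tensor--hom gives $\HOM_{B_1}(M,-)$ as the right adjoint, projectivity of $M$ over $B_1$ lets you rewrite this as $\HOM_{B_1}(M,B_1)\otimes_{B_1}-$ (the paper calls the natural map $\Phi$), and the bimodule $\HOM_{B_1}(M,B_1)$ is identified with $(M^\vee)^{\psi_1^{-1}}\{\delta_1,\sigma_1\}$ via Lemma~\ref{lem:dual-module-isom} and currying. So that part matches.

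For the left adjoint you take a genuinely different route. You propose to verify the adjunction directly, by producing a natural isomorphism $\HOM_{B_2}({}^{\psi_2}(M^\vee)\{\delta_2,\sigma_2\},Y)\cong M\otimes_{B_2}Y$; this requires a left-twisted variant ${}^{\psi_2^{-1}}B_2\cong B_2^\vee\{\delta_2,\sigma_2\}$ of Lemma~\ref{lem:dual-module-isom} (which in fact follows at once from that lemma, since $b\mapsto\psi^{-1}(b)$ gives an isomorphism $B^\psi\cong{}^{\psi^{-1}}B$ of superbimodules, so you need not redo the argument with a variant of $\varphi$), plus the right-module analogue of $\Phi$ and a second hom-dual identification $M^*\cong{}^{\psi_2}(M^\vee)\{\delta_2,\sigma_2\}$. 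The paper instead bootstraps: it applies the right-adjoint result already proved to the $(B_2,B_1)$-superbimodule $N={}^{\psi_2}(M^\vee)\{\delta_2,\sigma_2\}$ (noting $N$ inherits the needed projectivity), deducing that $N\otimes_{B_1}-$ is left adjoint to $(N^\vee)^{\psi_2^{-1}}\{\delta_2,\sigma_2\}\otimes_{B_2}-$, and then collapses $(N^\vee)^{\psi_2^{-1}}\{\delta_2,\sigma_2\}\cong M$ using $(M^\vee)^\vee\cong M$ and the twist-commutation rule~\eqref{eq:tau-duality}. The paper's approach avoids re-running the Koszul-sign bookkeeping a second time and reduces the left-adjoint statement to two bimodule identities that were already in hand; your approach is more symmetric and self-contained but will cost you the extra verifications you flag at the end. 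Both are correct.
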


\begin{proof}
  Recall the left $B_2$-action and right $B_1$-action on $\HOM_{B_1}(M,B_1)$ defined by~\eqref{eq:left-hom-action} and~\eqref{eq:right-hom-action}, respectively.  Suppose $X$ is a graded left $B_1$-supermodule.  Consider the natural $\F$-linear map
  \[
  	\Phi \colon \HOM_{B_1}(M,B_1)\otimes_{B_1} X\to \HOM_{B_1}(M,X),\quad \alpha\otimes x\mapsto \big(~m\mapsto (-1)^{\bar{x}\bar{m}}\alpha(m)x~\big).
  \]
  \details{
    This map is well-defined since we have (see~\eqref{eq:right-hom-action}), for $\alpha \in \HOM_{B_1}(M,B_1)$, $b \in B_1$, $x \in X$,
    \begin{gather*}
      (\alpha \cdot b) \otimes x = \left( (-1)^{\bar b \bar \alpha} (\pr{b}) \circ \alpha \right) \otimes x \mapsto \big(~m \mapsto (-1)^{\bar x \bar m + \bar b \bar m} (\alpha(m)b) x~\big), \qquad \text{and} \\
      \alpha \otimes (b x) \mapsto \big(~m \mapsto (-1)^{(\bar b + \bar x) \bar m} \alpha(m) (b x) = (-1)^{\bar x \bar m + \bar b \bar m} (\alpha(m) b) x~).
    \end{gather*}
  }
  If $M \cong B_1$ as a left $B_1$-supermodule, then $\Phi$ is an isomorphism between the graded vector spaces $\HOM_{B_1}(B_1,B_1)\otimes_{B_1} X$ and $\HOM_{B_1}(B_1,X)$, which are both naturally isomorphic to $X$. If $M$ is free as a left $B_1$-supermodule, then we also get an isomorphism in a similar way. It follows from additivity that $\Phi$ is an isomorphism whenever $M$ is a finite-dimensional projective graded $B_1$-supermodule.
  \details{
    If $P\oplus Q=F$ where $F$ is a free supermodule, then $\Phi$ gives an isomorphism
    \[
      \Phi \colon (\HOM_{B_1}(P,B_1) \otimes_{B_1} X) \oplus (\HOM_{B_1}(Q,B_1) \otimes_{B_1} X) \to \HOM_{B_1}(P,X) \oplus \HOM_{B_1}(Q,X).
    \]
    It is clear from the definition of $\Phi$ that $\Phi(\HOM_{B_1}(P,B_1)\otimes_{B_1} X)\subseteq  \HOM_{B_1}(P,X)$ and $\Phi(\HOM_{B_1}(Q,B_1)\otimes_{B_1} X)\subseteq  \HOM_{B_1}(Q,X)$.  The claim follows.
  }
  The isomorphism $\Phi$ is natural in $M$, hence it is also an isomorphism of graded left $B_2$-supermodules, for the action~\eqref{eq:left-hom-action}, when $M$ satisfies the conditions in the statement of the lemma.  Since $\Phi$ is also natural in $X$, we have an isomorphism of functors
  \[
    \HOM_{B_1}(M,B_1) \otimes_{B_1} - \cong \HOM_{B_1}(M,-)
  \]
  from $B_1\grm$ to $B_2\grm$.

  From Lemma~\ref{lem:dual-module-isom}, we have an isomorphism of graded right $(B_1,B_1)$-superbimodules $B_1 \cong \HOM_\F(B_1,\F)^{\psi_1^{-1}}\{\delta_1,\sigma_1\}$.  We then get isomorphisms of graded $(B_2,B_1)$-superbimodules
  \begin{align*}
    \HOM_{B_1}(M,B_1) &\cong \HOM_{B_1}(M,\HOM_\F(B_1,\F)^{\psi_1^{-1}}\{\delta_1,\sigma_1\}) \\
    &\cong \HOM_{B_1}(M,\HOM_\F(B_1,\F))^{\psi_1^{-1}}\{\delta_1,\sigma_1\} \\
    &\cong \HOM_\F(B_1 \otimes_{B_1} M,\F)^{\psi_1^{-1}}\{\delta_1,\sigma_1\} \\
    &\cong \HOM_\F(M,\F)^{\psi_1^{-1}}\{\delta_1,\sigma_1\} \\
    &\cong (M^\vee)^{\psi_1^{-1}}\{\delta_1,\sigma_1\},
  \end{align*}
  where the third isomorphism follows from the tensor-hom adjunction (it is a straightforward exercise to verify that this is an isomorphism of superbimodules).
  \details{
    The third isomorphism above follows from the fact that we have an isomorphism of $(B_2,B_1)$-superbimodules
    \[
      \HOM_\F(B_1 \otimes_{B_1} M,\F) \cong \HOM_{B_1}(M,\HOM_\F(B_1,\F)).
    \]
    That this is an isomorphism of graded $\F$-vector spaces follows from the tensor-hom adjunction.  The left $B_2$-action on both sides comes from the right action of $B_2$ on $M$, as in~\eqref{eq:left-hom-action}. The right action of $B_1$ on the RHS side comes from \eqref{eq:right-hom-action} because $\HOM_\F(B_1,\F)$ is a right $B_1$-supermodule (and this comes from the left multiplication action of $B_1$ on itself, from \eqref{eq:left-module-dual-right-action}). The right action of $B_1$ on the LHS comes from the left action on $B_1\otimes_{B_1}M$ as in~\eqref{eq:left-module-dual-right-action}.  Now, the isomorphism respects these actions because it is defined as follows: if $f\in\HOM_\F(B_1 \otimes_{B_1} M,\F)$, we define $\varphi_f\in  \HOM_{B_1}(M,\HOM_\F(B_1,\F))$ by defining, for all $m\in M$, a map $\varphi_f(m) \in \HOM_\F(B_1,\F)$. For all $b\in B_1$ we have
    \[
      \varphi_f(m)(b)=(-1)^{\bar b \bar m}f(b\otimes m).
    \]
    Now, for the left $B_2$-action, we have
    \begin{multline*}
      \varphi_{b_2f}(m)(b) = (-1)^{\bar b \bar m}(b_2f)(b\otimes m) = (-1)^{\bar b \bar m + \bar b_2 \bar f + \bar b_2 \bar b + \bar b_2 \bar m} f(b\otimes mb_2) \\
      = (-1)^{\bar b_2 \bar f + \bar b_2 \bar m} \varphi_f(mb_2)(b)
      = (-1)^{\bar b_2 \bar f} (\varphi_f \circ \pr{b_2})(m)(b) = (b_2\varphi_f)(m)(b).
    \end{multline*}
    So $\varphi$ is indeed a map of left $B_2$-supermodules.  For the right action of $B_1$, we have
    \begin{multline*}
      \varphi_{fb_1}(m)(b) = (-1)^{\bar b \bar m}(fb_1)(b\otimes m) = (-1)^{\bar b \bar m} f(b_1b\otimes m) = (-1)^{\bar b_1 \bar m} \varphi_f(m)\circ\pL{b_1}(b) \\
      = (-1)^{\bar b_1 \bar m} (\varphi_f(m) b_1)(b) = (-1)^{\bar b_1 \bar f} (\pr{b_1}\circ\varphi_f)(m)(b) = (\varphi_fb_1)(m)(b).
    \end{multline*}
  }
  It follows that we have isomorphisms of functors
  \[
    (M^\vee)^{\psi_1^{-1}}\{\delta_1,\sigma_1\} \otimes_{B_1} - \cong \HOM_{B_1}(M,B_1) \otimes_{B_1} - \cong \HOM_{B_1}(M,-).
  \]
  Since the functor $M\otimes_{B_2} -$ has the right adjoint $\HOM_{B_1}(M,-)$, the first statement of the lemma holds.

  For the second statement, remark that if $M$ is finite dimensional and projective as a left $B_1$-supermodule and as a right $B_2$-supermodule, then $M^\vee$ is also finite dimensional and projective as a left $B_2$-supermodule and right $B_1$-supermodule, and thus so is $^{\psi_2}(M^\vee)$.
  \details{It is clear for free supermodules.  For projective supermodules, we use the fact that $(M\oplus N)^\vee\cong M^\vee\oplus N^\vee$.}
  Thus we can apply the first part of the proof to conclude that
  $^{\psi_2}M^\vee\{\delta_2,\sigma_2\} \otimes_{B_1}-$ is left adjoint to
  \[
    ((^{\psi_2}M^\vee\{\delta_2,\sigma_2\})^\vee)^{\psi_2^{-1}}\{\delta_2,\sigma_2\} \otimes_{B_2}- \cong ((^{\psi_2}M^\vee)^\vee)^{\psi_2^{-1}} \otimes_{B_2}-.
  \]
  But, using \eqref{eq:tau-duality}, we have isomorphisms of $(B_1,B_2)$-superbimodules
  \[
    ((^{\psi_2}M^\vee)^\vee)^{\psi_2^{-1}} \cong (((M^\vee)^\vee)^{\psi_2})^{\psi_2^{-1}} \cong (M^\vee)^\vee \cong M,
  \]
  which concludes the proof.
\end{proof}

\begin{prop} \label{prop:Frobenius-twisted}
  If $A$ is a tower of algebras such that each $A_\lambda$, $\lambda \in \Lambda$, is a Frobenius graded superalgebra of degree $(-\delta_\lambda, \sigma_\lambda)$ with Nakayama automorphism $\psi_\lambda$, then induction is conjugate shifted right adjoint to restriction with conjugation $\Psi$ and shifting $\{\delta,\sigma\}$ (in the notation of Definition~\ref{def:twisted-adjoint}).
\end{prop}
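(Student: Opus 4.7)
The plan is to identify the left adjoint of $\nabla = \Ind^{A_{\lambda+\mu}}_{A_\lambda\otimes A_\mu}$ with the conjugate shifted restriction functor $\Psi^{\otimes 2}\{\delta^{\otimes 2},\sigma^{\otimes 2}\}\Delta\Psi^{-1}\{-\delta,-\sigma\}$ appearing in Definition~\ref{def:twisted-adjoint}; once both functors are realised via explicit bimodules, the matching will come down to a single isomorphism of $(A_\lambda\otimes A_\mu, A_{\lambda+\mu})$-superbimodules that is read off from the Frobenius structure.

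Fix $\lambda,\mu\in\Lambda$ and set $e := \rho_{\lambda,\mu}(1_\lambda\otimes 1_\mu)\in A_{\lambda+\mu}$, so that $M := A_{\lambda+\mu}e$ is a $(A_{\lambda+\mu},A_\lambda\otimes A_\mu)$-superbimodule with $\Ind\cong M\otimes_{A_\lambda\otimes A_\mu}-$ and $\Res\cong eA_{\lambda+\mu}\otimes_{A_{\lambda+\mu}}-$. By axiom~\ref{item:TA3}, $M$ is projective as a right $(A_\lambda\otimes A_\mu)$-supermodule; as a direct summand of $A_{\lambda+\mu}$, it is also projective as a left $A_{\lambda+\mu}$-supermodule. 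By Lemma~\ref{lem:Frobenius-alg-tensor-prod}, $A_\lambda\otimes A_\mu$ is Frobenius of degree $(-\delta_\lambda-\delta_\mu,\sigma_\lambda+\sigma_\mu)$ with Nakayama automorphism $\psi_\lambda\otimes\psi_\mu$, so Lemma~\ref{lem:rickard} gives that the left adjoint of $\Ind$ is tensor product with the $(A_\lambda\otimes A_\mu, A_{\lambda+\mu})$-superbimodule ${}^{\psi_\lambda\otimes\psi_\mu}(M^\vee)\{\delta_\lambda+\delta_\mu,\sigma_\lambda+\sigma_\mu\}$.

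On the other hand, unwinding $\Psi^{\otimes 2}\{\delta^{\otimes 2},\sigma^{\otimes 2}\}\Delta\Psi^{-1}\{-\delta,-\sigma\}$ applied to $X\in A_{\lambda+\mu}\grm$ and using~\eqref{eq:twist-across-tensor-prod} to push the twist by $\psi_{\lambda+\mu}^{-1}$ across the tensor product shows that this functor is tensor product with
\[
{}^{\psi_\lambda\otimes\psi_\mu}(eA_{\lambda+\mu})^{\psi_{\lambda+\mu}}\{\delta_\lambda+\delta_\mu-\delta_{\lambda+\mu},\sigma_\lambda+\sigma_\mu-\sigma_{\lambda+\mu}\}.
\]
The proposition therefore reduces to establishing the bimodule isomorphism
\[
M^\vee \cong (eA_{\lambda+\mu})^{\psi_{\lambda+\mu}}\{-\delta_{\lambda+\mu},\sigma_{\lambda+\mu}\}.
\]
For this, I would start from the $(A_{\lambda+\mu},A_{\lambda+\mu})$-superbimodule isomorphism $A_{\lambda+\mu}^\vee\cong A_{\lambda+\mu}^{\psi_{\lambda+\mu}}\{-\delta_{\lambda+\mu},\sigma_{\lambda+\mu}\}$ of Lemma~\ref{lem:dual-module-isom}, implemented by $b\mapsto \bigl(a\mapsto (-1)^{\bar a\bar b}\tr_{\lambda+\mu}(ab)\bigr)$. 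Restricting it to the left summand $M = A_{\lambda+\mu}e$, nondegeneracy of the trace form identifies the matching summand of $A_{\lambda+\mu}^{\psi_{\lambda+\mu}}$ as $eA_{\lambda+\mu}$, and a direct calculation shows that the left $(A_\lambda\otimes A_\mu)$-action on $M^\vee$ coming from~\eqref{eq:left-hom-action} corresponds under this identification to the natural action on $eA_{\lambda+\mu}$ via $\rho_{\lambda,\mu}$.

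The main obstacle is this last identification. Matching the two bimodule structures on $M^\vee$ and $(eA_{\lambda+\mu})^{\psi_{\lambda+\mu}}\{-\delta_{\lambda+\mu},\sigma_{\lambda+\mu}\}$ in full generality is a sign-heavy bookkeeping exercise: the Nakayama automorphism has to be propagated through the trace pairing, the parity shift acts asymmetrically on left and right supermodules (cf.\ the definition of $\Pi$ in Section~\ref{subsec:module-categories}), and the homomorphism $\rho_{\lambda,\mu}$ need not be unital, so one must handle $e$ as a genuine idempotent distinct from $1_{\lambda+\mu}$. Once this bimodule isomorphism is in hand, combining it with the output of Lemma~\ref{lem:rickard} and the unwinding above yields the claimed adjunction.
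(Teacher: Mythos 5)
Your approach matches the paper's proof: both invoke Lemma~\ref{lem:rickard} to write the left adjoint of $\Ind^{A_{\lambda+\mu}}_{A_\lambda\otimes A_\mu}$ as tensoring with ${}^{\psi_\lambda\otimes\psi_\mu}(M^\vee)\{\delta_\lambda+\delta_\mu,\sigma_\lambda+\sigma_\mu\}$, then use Lemma~\ref{lem:dual-module-isom} to identify $M^\vee$ with the Nakayama-twisted shifted regular bimodule, and finally appeal to~\eqref{eq:twist-across-tensor-prod} to push $\psi_{\lambda+\mu}^{-1}$ into the conjugation $\Psi^{-1}$. The one place where you diverge is the treatment of the idempotent $e = \rho_{\lambda,\mu}(1_\lambda\otimes 1_\mu)$: you work throughout with the corners $A_{\lambda+\mu}e$ and $eA_{\lambda+\mu}$, and flag the resulting bimodule identification as a ``sign-heavy bookkeeping exercise.'' The paper avoids this by taking $M = A_{\lambda+\mu}$ itself (which gives the same induction functor, since the extra summand $A_{\lambda+\mu}(1-e)$ tensors to zero against any unital $(A_\lambda\otimes A_\mu)$-supermodule) and then simply \emph{restricting the left action} in the $(A_{\lambda+\mu},A_{\lambda+\mu})$-superbimodule isomorphism of Lemma~\ref{lem:dual-module-isom} along $\rho_{\lambda,\mu}$; this restriction automatically reproduces the action of~\eqref{eq:left-hom-action} since $\pr{b_2} = \pr{\rho(b_2)}$. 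So the ``main obstacle'' you describe is already dispensed with by Lemma~\ref{lem:dual-module-isom} plus a one-line restriction-of-scalars observation, and there is no residual sign bookkeeping beyond what that lemma has done. Your version is slightly more scrupulous about unitality of $\rho_{\lambda,\mu}$, but both routes are valid and essentially identical in substance.
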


\begin{proof}
  We apply Lemma~\ref{lem:rickard} with $B_1 = A_{\lambda+\mu}$, $B_2 = A_\lambda \otimes A_\mu$, and $M = A_{\lambda+\mu}$, considered as an $(A_{\lambda+\mu},A_\lambda \otimes A_\mu)$-superbimodule in the natural way.  In the notation of Lemma~\ref{lem:rickard}, we have
  \[
    \psi_2 = \psi_\lambda \otimes \psi_\mu,\quad \delta_2 = \delta_\lambda + \delta_\mu,\quad \sigma_2 = \sigma_\lambda + \sigma_\mu.
  \]
  By Lemma~\ref{lem:dual-module-isom}, we have
  \[
    (A_{\lambda+\mu})^\vee \cong (A_{\lambda+\mu})^{\psi_{\lambda+\mu}}\{-\delta_{\lambda+\mu}, -\sigma_{\lambda+\mu}\},
  \]
  as $(A_{\lambda+\mu},A_{\lambda+\mu})$-superbimodules.  Restricting the action on the left, we have
  \[
    M^\vee \cong {_{A_\lambda \otimes A_\mu}(A_{\lambda+\mu})}^{\psi_{\lambda+\mu}}\{-\delta_{\lambda+\mu}, -\sigma_{\lambda+\mu}\},
  \]
  as $(A_\lambda \otimes A_\mu, A_{\lambda+\mu})$-superbimodules.  Then, by Lemma~\ref{lem:rickard}, the left adjoint to $\Ind_{A_\lambda \otimes A_\mu}^{A_{\lambda+\mu}} = M \otimes_{B_2} -$ is
  \begin{multline*}
    ^{\psi_\lambda \otimes \psi_\mu} ({_{A_\lambda \otimes A_\mu}(A_{\lambda+\mu})})^{\psi_{\lambda+\mu}}\{ \delta_\lambda + \delta_\mu-\delta_{\lambda+\mu},   \sigma_\lambda+ \sigma_\mu-\sigma_{\lambda+\mu}\} \otimes_{A_{\lambda+\mu}} - \\ = (\Psi_\lambda \otimes \Psi_\mu)\Res^{A_{\lambda+\mu}}_{A_\lambda \otimes A_\mu} \Psi_{\lambda + \mu}^{-1}\{ \delta_\lambda + \delta_\mu-\delta_{\lambda+\mu},   \sigma_\lambda+ \sigma_\mu-\sigma_{\lambda+\mu}\},
  \end{multline*}
  where the exponent of $-1$ on $\Psi_{\lambda+\mu}$ comes from~\eqref{eq:twist-across-tensor-prod}.
\end{proof}

%%%%%%%%%%%%%%%%%%%%%%%%%%%%%%%%%%%%%%%%%%%%%%%%%%%%
%
\section{Towers of wreath product algebras} \label{sec:wreath}
%
%%%%%%%%%%%%%%%%%%%%%%%%%%%%%%%%%%%%%%%%%%%%%%%%%%%%

In this section, we introduce a large class of examples of strong compatible towers.  This class includes several towers that have been considered in the literature (see Example~\ref{eg:wreath-examples}).

Suppose that $B$ is a graded superalgebra over an algebraically closed field $\F$.  Recall that $B^{\otimes n}$ is a graded superalgebra with multiplication
\[
  (b_1 \otimes \dotsb \otimes b_n)(b_1' \otimes \dotsb \otimes b_n') = (-1)^{\sum_{i<j} \bar b_i \bar b_j'} b_1 b_1' \otimes \dotsb \otimes b_n b_n',\quad b_i,b_i' \in B,\ 1 \le i \le n.
\]
Then $S_n$ acts on $B^{\otimes n}$ by superpermutations.  More precisely, if $s_k \in S_n$ is the simple transposition $(k,k+1)$ for $1 \le k \le n-1$, then
\[
  s_k \cdot (b_1 \otimes \dotsb \otimes b_n)
  = (-1)^{\bar b_k \bar b_{k+1}} b_1 \otimes \dotsb \otimes b_{k-1} \otimes b_{k+1} \otimes b_k \otimes b_{k+2} \otimes \dotsb \otimes b_n.
\]
(Note that we consider the action by superpermutations since the action by usual permutations does not, in general, give an action of $S_n$ on $B^{\otimes n}$ by superalgebra automorphisms.)  We can thus form the algebra $A_n := B^{\otimes n} \rtimes S_n$.  As an $\F$-vector space, we have $B^{\otimes n} \rtimes S_n \cong B^{\otimes n} \otimes \F[S_n]$.  This inherits a $(\Z \times \Z_2)$-grading from $B$ by declaring $S_n$ to be in degree zero.  The multiplication is determined by the fact that $B^{\otimes n}$ and $\kk[S_n]$ are sub-superalgebras and
\[
  \tau \beta \tau^{-1} = \tau \cdot \beta,\quad \beta \in B^{\otimes n},\ \tau \in S_n.
\]
Motivated by the analogous construction for groups, we call $B^{\otimes n} \rtimes S_n$ a \emph{wreath product algebra}.

\begin{eg} \label{eg:wreath-examples}
  If $B$ is the rank one Clifford superalgebra, then $B^{\otimes n} \rtimes S_n$ is isomorphic to the Sergeev superalgebra (see~\cite[Lem.~13.2.3]{Kle05}).  Thus, the tower of Sergeev superalgebras fits into the framework of the current section.  The algebras considered in~\cite{CL12} are also of the form presented here.
\end{eg}

We define an external multiplication on $A = \bigoplus_{n \in \N} A_n$ by
\begin{multline} \label{def-eq:ext-mult}
  \rho_{m,n} \colon A_m \otimes A_n \cong (B^{\otimes m} \rtimes S_m) \otimes (B^{\otimes n} \rtimes S_n) \\
  \cong B^{\otimes (m+n)} \rtimes (S_m \times S_n) \hookrightarrow B^{\otimes (m+n)} \rtimes S_{m+n} \cong A_{m+n},
\end{multline}
induced by the natural inclusion $S_m \times S_n \hookrightarrow S_{m+n}$.  Axioms~\ref{item:TA1} and~\ref{item:TA2} follow immediately.

Recall that, as a left $(\F[S_m] \otimes \F[S_n])$-module, $\F[S_{m+n}]$ has a basis given by minimal length representatives of the cosets $(S_m \times S_n) \backslash S_{m+n}$.  It follows that, as a left $(A_m \otimes A_n)$-supermodule, we have
\[ \ts
  A_{m+n} = \bigoplus_{w \in X_{m,n}} (A_m \otimes A_n) w,
\]
where $X_{m,n}$ is a set of minimal length representatives of the cosets $(S_m \times S_n) \backslash S_{m+n}$.  Thus $A_{m+n}$ is a projective left $(A_m \otimes A_n)$-supermodule.  Similarly, it is also a projective right $(A_m \otimes A_n)$-supermodule and so axiom~\ref{item:TA3} is satisfied.   Finally,~\ref{item:TA4} is satisfied since $\F$ is algebraically closed.

Now suppose that $B$ is a Frobenius graded superalgebra of degree $(-\delta,\sigma)$.  If $\tr_{B} \colon B \to \F\{\delta,\sigma\}$ is the trace map of $B$, then, by Lemma~\ref{lem:Frobenius-alg-tensor-prod}, $B^{\otimes n}$ is a Frobenius graded superalgebra with trace map $\tr_B^{\otimes n}$.  Recall also that $\F[S_n]$  is a Frobenius graded superalgebra (concentrated in degree zero) with trace map $\tr_{S_n} \colon \F[S_n] \to \F$ given by $\tr_{S_n}(w) = \delta_{w,w_0}$, where $w_0$ is the longest element of $S_n$.

\begin{lem} \label{lem:wreath-Frobenius}
  The algebra $A_n$ is a Frobenius graded superalgebra of degree $(-n\delta,n\sigma)$, with trace map $\tr_n \colon A_n \to \F$ given by $\tr_n = \tr_B^{\otimes n} \otimes \tr_{S_n}$.  The corresponding Nakayama automorphism $\psi_n \colon A_n \to A_n$ is given by
  \begin{gather*} \ts
    \psi_n(b_1 \otimes \dotsb \otimes b_n) = (-1)^{\sum_{i<j}\bar b_i \bar b_j} \psi_B(b_n) \otimes \dotsb \otimes \psi_B(b_1),\quad b_1,\dotsc,b_n \in B,\\
    \psi_n(s_i) = (-1)^\sigma s_{n-i},\quad i=1,\dotsc,n-1,
  \end{gather*}
  where $\psi_B \colon B \to B$ is the Nakayama automorphism of $B$.
\end{lem}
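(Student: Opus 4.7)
My plan is to verify the three equivalent conditions of Definition~\ref{def:frobenius} for $A_n$ by exhibiting the trace map $\tr_n$, and then identify the Nakayama automorphism via the uniqueness implicit in Lemma~\ref{lem:Nakayama-def}. With respect to the vector space decomposition $A_n \cong B^{\otimes n} \otimes \F[S_n]$, the map $\tr_n := \tr_B^{\otimes n} \otimes \tr_{S_n}$ sends $\beta w$, for $\beta \in B^{\otimes n}$ and $w \in S_n$, to $\tr_B^{\otimes n}(\beta)\, \delta_{w,w_0}$. Since $\tr_B^{\otimes n}$ is graded of degree $(-n\delta, n\sigma)$ by Lemma~\ref{lem:Frobenius-alg-tensor-prod} and $\F[S_n]$ sits in degree $(0,0)$, $\tr_n$ is graded of degree $(-n\delta, n\sigma)$.

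Next I would verify that $\ker \tr_n$ contains no nonzero left ideal, using the decomposition $A_n = \bigoplus_{w \in S_n} B^{\otimes n}\, w$ as a left $B^{\otimes n}$-module. Write a nonzero element as $a = \sum_{w} \beta_w w$ and pick $w$ with $\beta_w \neq 0$. A short computation, using the semidirect-product relation $\tau \beta = (\tau \cdot \beta)\tau$ and the fact that $w_0 w^{-1} w' = w_0$ iff $w' = w$, gives
\[
  \tr_n\bigl( (\alpha w_0 w^{-1})\, a\bigr) = \tr_B^{\otimes n}\bigl(\alpha \cdot ((w_0 w^{-1}) \cdot \beta_w)\bigr)
\]
for any $\alpha \in B^{\otimes n}$. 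Since $B^{\otimes n}$ is Frobenius and $(w_0 w^{-1}) \cdot \beta_w \neq 0$, one can choose $\alpha$ making the right-hand side nonzero.

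For the Nakayama automorphism, it suffices to check that the proposed formula satisfies $(a,b) = (-1)^{\bar a \bar b}(b, \psi_n(a))$ on a generating set, namely for $a \in B^{\otimes n}$ and for $a = s_i$. For $a = \beta \in B^{\otimes n}$ and $b = \gamma v$ with $\gamma \in B^{\otimes n}$ and $v \in S_n$, both pairings vanish unless $v = w_0$, and the identity reduces to $w_0 \cdot \psi_n(\beta) = \psi_B^{\otimes n}(\beta)$, where $\psi_B^{\otimes n}$ is the Nakayama automorphism of $B^{\otimes n}$ from Lemma~\ref{lem:Frobenius-alg-tensor-prod}; this is a direct computation of the super-reversal sign and reproduces the stated formula on pure tensors. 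For $a = s_i$ and $b = \gamma v$, using the identity $s_i w_0 = w_0 s_{n-i}$, both sides vanish unless $v = w_0 s_{n-i}$, in which case the identity reduces to $\tr_B^{\otimes n}(s_i \cdot \gamma) = (-1)^\sigma \tr_B^{\otimes n}(\gamma)$. Since $\tr_B$ is supported in parity $\sigma$, each surviving tensor factor $c_j$ of $\gamma$ has $\bar c_j = \sigma$, so the superpermutation sign $(-1)^{\bar c_i \bar c_{i+1}}$ collapses to $(-1)^\sigma$, as required. Lemma~\ref{lem:Nakayama-def} then promotes $\psi_n$ to a graded superalgebra automorphism automatically.

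The main obstacle is sign bookkeeping, i.e.\ ensuring that the super-permutation signs entering through the semidirect-product structure and the super-reversal signs from $w_0$ align with the formula for $\psi_n$. The key observation that trivializes most of this is that $\tr_B$ is supported in bidegree $(\delta,\sigma)$, which forces the elements that contribute to $\tr_n$ all to have parity $\sigma$ in each tensor factor, collapsing the signs to the desired form.
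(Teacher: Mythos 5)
Your proof is correct and follows essentially the same approach as the paper: verify the left-ideal criterion for the trace form, then determine the Nakayama automorphism by checking the defining relation against each generator of $A_n$ and exploiting the fact that $\tr_B$ is concentrated in parity $\sigma$ to collapse the superpermutation signs. The only (minor) organizational difference is in the left-ideal step: you directly produce the element $\alpha\, w_0 w^{-1}$ to hit against an arbitrary nonzero $a$, whereas the paper first reduces to the case where the $w_0$-coefficient is nonzero by premultiplying by $w_0\tau^{-1}$ and then invokes the Frobenius property of $B^{\otimes n}$; these are interchangeable. In the Nakayama computation your reduction of the $B^{\otimes n}$ case to the single identity $w_0\cdot\psi_n(\beta)=\psi_B^{\otimes n}(\beta)$ (via nondegeneracy of the $B^{\otimes n}$ form) is a clean packaging of the paper's calculation, but the underlying bookkeeping — the super-reversal sign from $w_0$ and the $(-1)^\sigma$ from the simple transposition acting on parity-$\sigma$ factors — is the same.
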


\begin{proof}
  To show that $A_n$ is a Frobenius graded superalgebra with trace map $\tr_n$, it suffices to show that $\ker \tr_n$ contains no nonzero left ideals.  Let $I$ be a nonzero left ideal of $A_n$.  Then choose a nonzero element $a = \sum_{w \in S_n} a_w w \in I$.  Without loss of generality, we may assume that $a_{w_0} \ne 0$ (otherwise consider $w_0 \tau^{-1} a$, where $\tau$ is a maximal length element of the set $\{w \in S_n\ |\ a_w \ne 0\}$).  Now, since $B^{\otimes n}$ is a Frobenius algebra, the ideal of $B^{\otimes n}$ generated by $a_{w_0}$ is not contained in $\ker \tr_B^{\otimes n}$.  Thus, there exists $c \in B^{\otimes n}$ such that $\tr_B^{\otimes n}(c a_{w_0}) \ne 0$.  Then $\tr_n(ca) \ne 0$.  Thus, $I$ is not contained in $\tr_n$.  It is clear that the degree of the Frobenius graded superalgebra $A_n$ is $(-n\delta, n\sigma)$.

  Now suppose $\beta \in B^{\otimes n}$ and $\tau \in S_n$ such that $\tau s_i = w_0$ (hence $s_{n-i} \tau = w_0$).  Write $\beta = \beta' + \beta''$, where
  \[
    \beta' \in (\bar B_\sigma)^{\otimes n},\quad \beta'' \in \bigoplus_{\substack{\epsilon_1,\dotsc,\epsilon_n \\ \text{not all equal to } \sigma}} \left( \bar B_{\epsilon_1} \otimes \dotsb \otimes \bar B_{\epsilon_n} \right).
  \]
  Then
  \begin{multline*}
    \tr_n((\beta \otimes \tau)s_i) = \tr_n(\beta \otimes w_0) = \tr_B^{\otimes n}(\beta') = (-1)^\sigma \tr_B^{\otimes n}(s_{n-i} \cdot \beta') \\
    = (-1)^\sigma \tr_B^{\otimes n}(s_{n-i} \cdot \beta) = (-1)^\sigma \tr_n(s_{n-i} (\beta \otimes \tau)).
  \end{multline*}
  On the other hand, if $\tau s_i \ne w_0$ (in which case $s_{n-i} \tau \ne w_0$), then
  \[
    \tr_n((\beta \otimes \tau)s_i) = 0 = \tr_n(s_{n-i}(\beta \otimes \tau)).
  \]
  Thus, $\psi_n(s_i)= (-1)^\sigma s_{n-i}$.

  Finally, for homogeneous $b_1,\dotsc,b_n \in B$ and $\beta \in B^{\otimes n}$, we have
  \begin{align*}
    \tr_n((b_1 \otimes \dotsb \otimes b_n)(\beta \otimes w_0)) &= \tr((b_1 \otimes \dotsb \otimes b_n)\beta \otimes w_0) \\
    &= \ts \tr_B^{\otimes n}((b_1 \otimes \dotsb \otimes b_n)\beta) \\
    &= \ts (-1)^{|\beta|(|b_1| + \dotsb + |b_n|)} \tr_B^{\otimes n}(\beta(\psi_B(b_1) \otimes \dotsb \otimes \psi_B(b_n))) \\
    &= \ts (-1)^{|\beta|(|b_1| + \dotsb + |b_n|)} (-1)^{\sum_{i<j}\bar b_i \bar b_j} \tr_n ((\beta \otimes w_0)(\psi_B(b_n) \otimes \dotsb \otimes \psi_B(b_1))).
  \end{align*}
  On the other hand, if $\tau \ne w_0$, then we have
  \[ \ts
    \tr_n((b_1 \otimes \dotsb \otimes b_n)(\beta \otimes \tau)) = 0 = \tr_n ((\beta \otimes \tau)(\psi_B(b_n) \otimes \dotsb \otimes \psi_B(b_1))).
  \]
  This completes the proof.
\end{proof}

\begin{prop} \label{prop:wreath-tower-strong}
  The tower $A$ is strong with trivial twist and conjugation $\Psi$ given by the Nakayama automorphism (see Proposition~\ref{prop:Frobenius-twisted}).
\end{prop}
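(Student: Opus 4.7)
The plan is to verify both axioms of Definition~\ref{def:strong} with trivial twist (taking $d=\epsilon=0$) and with conjugation $\Psi$ induced by the Nakayama automorphisms $\psi_n$ of Lemma~\ref{lem:wreath-Frobenius}.

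For axiom~\ref{item:S1}, I will combine Lemma~\ref{lem:wreath-Frobenius} with Proposition~\ref{prop:Frobenius-twisted}. The former shows that each $A_n$ is a Frobenius graded superalgebra of degree $(-n\delta, n\sigma)$, so the latter immediately yields conjugate shifted right adjointness of $\nabla$ to $\Delta$, with shift $\{\delta_\bullet,\sigma_\bullet\}$ determined by $\delta_n=n\delta$ and $\sigma_n=n\sigma$. Since these sequences are additive in $n$, the defects
\[
\kappa_\delta(m,n) = (m+n)\delta - m\delta - n\delta = 0,\qquad \kappa_\sigma(m,n) = (m+n)\sigma - m\sigma - n\sigma = 0
\]
vanish identically, so the choice $\kappa=0$ together with $d=\epsilon=0$ satisfies~\ref{item:S1}.

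For axiom~\ref{item:S2}, with $d=\epsilon=0$ the endofunctor $\{\chi\}$ is the identity, so it suffices to produce a natural isomorphism $\Delta\nabla \cong \nabla^{\otimes 2}\circ S_{23}\circ \Delta^{\otimes 2}$. Evaluating on $M\boxtimes N$ with $M\in A_m\grm$ and $N\in A_n\grm$, and projecting onto the $(l,l')$-summand with $l+l'=m+n$, this amounts to the Mackey-type isomorphism
\[
\Res^{A_{m+n}}_{A_l\otimes A_{l'}}\Ind^{A_{m+n}}_{A_m\otimes A_n}(M\boxtimes N) \cong \bigoplus_{\substack{a+b=l \\ a\le m,\,b\le n}} \Ind^{A_l\otimes A_{l'}}_{A_a\otimes A_b\otimes A_{m-a}\otimes A_{n-b}}\circ S_{23}\bigl(\Res^{A_m}_{A_a\otimes A_{m-a}}M\boxtimes \Res^{A_n}_{A_b\otimes A_{n-b}}N\bigr).
\]
I would prove this by decomposing $A_{m+n}$ as an $\bigl((A_l\otimes A_{l'}),\,(A_m\otimes A_n)\bigr)$-superbimodule. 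The double coset decomposition of $S_{m+n}$ by $S_l\times S_{l'}$ on the left and $S_m\times S_n$ on the right is indexed by $2\times 2$ matrices of nonnegative integers with row sums $(l,l')$ and column sums $(m,n)$; the representative corresponding to entries $a, b, m-a, n-b$ (with $a+b=l$) contributes one superbimodule summand of the required form, and the reordering of the middle factors accounts precisely for $S_{23}$. Tensoring with $M\boxtimes N$ over $A_m\otimes A_n$ and applying the standard induction-restriction manipulations then yields the displayed isomorphism.

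The main obstacle will be bookkeeping of the Koszul signs. The superpermutation action of $S_{m+n}$ on $B^{\otimes(m+n)}$ introduces a sign whenever odd tensor factors are swapped, while $S_{23}$ introduces its own Koszul signs when interchanging the second and third factors of a four-fold outer tensor product. The substance of~\ref{item:S2} in this setting is precisely that these two sources of signs are compatible, so that the Mackey isomorphism holds with no additional grading or parity twist. This verification is straightforward but notationally delicate, and runs in parallel to the non-super argument of~\cite{SY13}.
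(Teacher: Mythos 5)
Your proposal is correct and follows essentially the same route as the paper: verify S1 via Lemma~\ref{lem:wreath-Frobenius} plus Proposition~\ref{prop:Frobenius-twisted} (with $\kappa_\delta=\kappa_\sigma=0$ by additivity of $n\mapsto n\delta$, $n\mapsto n\sigma$), and verify S2 by reformulating~\eqref{eq:strong-isom} as a superbimodule decomposition of $A_{m+n}$ governed by minimal length double coset representatives of $(S_l\times S_{l'})\backslash S_{m+n}/(S_m\times S_n)$, which the paper also indexes (equivalently, by the top-left entry $r$ of the $2\times 2$ contingency matrix). Where you defer the Koszul sign bookkeeping, the paper makes it concrete by writing down the representatives $w_r$ explicitly, establishing the commutation relation~\eqref{eq:wr-commutation}, and then producing the explicit superbimodule map~\eqref{eq:wreath-strong-tower-bimodule-map}, which is shown to be an isomorphism by a dimension count — so you would need to supply those pieces, but your outline matches the paper's argument.
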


\begin{proof}
  It follows from Lemma~\ref{lem:wreath-Frobenius} and Proposition~\ref{prop:Frobenius-twisted} that induction is conjugate shifted right adjoint to restriction.  Since $\alpha_{n+m} - \alpha_n - \alpha_n = (n+m)\alpha - n\alpha - m\alpha = 0$ for $\alpha = \delta, \sigma$, condition~\ref{item:S1} of Definition~\ref{def:strong} is satisfied with $d=\epsilon=0$ and $\kappa=0$.

  To check~\ref{item:S2}, we formulate the isomorphism~\eqref{eq:strong-isom} in terms of superbimodules.  Fix $n,m,k,\ell$ such that $n+m=k+\ell$ and set $K=n+m$. Let $_{(k,\ell)}(A_K)_{(n,m)}$ denote $A_K$, thought of as a $(A_k\otimes A_\ell,A_n\otimes A_m)$-superbimodule in the natural way.  Then we have
  \[
    \Res^{A_K}_{A_k \otimes A_\ell} \Ind^{A_K}_{A_n \otimes A_m} \cong {_{(k,\ell)}(A_K)_{(n,m)}} \otimes -.
  \]
  On the other hand, for each $r \in \N$ satisfying $k-m=n-\ell \le r \le \min \{n,k\}$, we have
  \begin{gather*}
    \Ind^{A_k \otimes A_\ell}_{A_r \otimes A_{n-r} \otimes A_{k-r} \otimes A_{\ell+r-n}} S_{23} \Res^{A_n \otimes A_m}_{A_r \otimes A_{n-r} \otimes A_{k-r} \otimes A_{\ell+r-n}} \cong M_r \otimes -,\quad \text{ where} \\
    M_r=(A_k\otimes A_\ell) \otimes_{A_r \otimes A_{k-r} \otimes A_{n-r} \otimes A_{\ell+r-n}} S^{23} \otimes_{A_r\otimes A_{n-r} \otimes A_{k-r}\otimes A_{\ell+r-n}} (A_n\otimes A_m),
  \end{gather*}
  and where $S^{23} = A_r\otimes A_{n-r} \otimes A_{k-r}\otimes A_{\ell+r-n}$ (as an $\F$-module) is a right $(A_r \otimes A_{n-r} \otimes A_{k-r} \otimes A_{\ell + r - n})$-supermodule in the natural way and is a left $(A_r \otimes A_{k-r} \otimes A_{n-r} \otimes A_{\ell+r-n})$-supermodule via the map
  \begin{gather*}
    A_r \otimes A_{k-r} \otimes A_{n-r} \otimes A_{\ell+r-n} \to A_r\otimes A_{n-r} \otimes A_{k-r}\otimes A_{\ell+r-n},\\
    a_1 \otimes a_2 \otimes a_3 \otimes a_4 \mapsto (-1)^{\bar a_2 \bar a_3} a_1 \otimes a_3 \otimes a_2 \otimes a_4.
  \end{gather*}
  Therefore, in order to prove the isomorphism~\eqref{eq:strong-isom}, it suffices to prove that we have an isomorphism of superbimodules
  \begin{equation} \label{eq:wreath-S2-isom} \ts
    _{(k,\ell)}(A_K)_{(n,m)} \cong \bigoplus_{r=n-\ell}^{\min\{n,k\}} M_r.
  \end{equation}

  For each $r \in \N$ satisfying $k-m = n - \ell \le r \le \min \{n,k\}$, define $w_r \in S_K$ by
  \[
    w_r(i) =
    \begin{cases}
      i & \text{if } 1 \le i \le r, \\
      i - r + k & \text{if } r < i \le n, \\
      i - n + r & \text{if } n < i \le n+k-r, \\
      i & \text{if } n+k-r < i \le K,
    \end{cases}
  \]
  Then the $w_r$ form a complete set of minimal length representatives of the double cosets in $S_k\times S_\ell \setminus S_K / S_n \times S_m$.  If $C_w$ is the double coset containing $w_r$, then its cardinality is
  \[
    |C_r| = m!n! \binom{k}{r} \binom{\ell}{n-r}.
  \]
  (See~\cite[proof of Prop.~4.3]{SY13} for further details.)  One easily verifies that
  \begin{equation} \label{eq:wr-commutation}
    w_r (a_1 \otimes a_2 \otimes a_3 \otimes a_4) = (-1)^{\bar a_2 \bar a_3} (a_1 \otimes a_3 \otimes a_2 \otimes a_4) w_r
  \end{equation}
  for all $a_1 \in A_r$, $a_2 \in A_{n-r}$, $a_3 \in A_{k-r}$, $a_4 \in A_{\ell+r-n}$.

  Define $M_r' \subseteq {_{(k,\ell)}(A_K)_{(n,m)}}$ to be the sub-superbimodule generated by $B^{\otimes r} \otimes w_r$.  Then
  \[ \ts
    {_{(k,\ell)}(A_K)_{(n,m)}}=\bigoplus_{r=n-\ell}^{\min\{n,k\}} M_r'
  \]
  and $\dim_\F M_r' = |C_r|(r \dim_\F B)$.  Note that the dimension of $A_k \otimes A_\ell$ as a right supermodule over $A_r \otimes A_{n-r} \otimes A_{k-r} \otimes A_{\ell-n+r}$ is $k! \ell! / r! (n-r)! (k-r)! (\ell - n + r)!$ and that the dimension of $A_m \otimes A_n$ as a left supermodule over $A_r \otimes A_{n-r} \otimes A_{k-r} \otimes A_{\ell+r-n}$ is $m! n! / r! (n-r)! (k-r)! (\ell - n+ r)!$.  Therefore, $\dim_\F M_r = |C_r|(r \dim_\F B) = \dim_\F M_r'$.  Now consider the $(A_k \otimes A_\ell, A_n \otimes A_m)$-superbimodule map $M_r \to M_r'$ uniquely determined by
  \begin{equation} \label{eq:wreath-strong-tower-bimodule-map}
    1_{A_k \otimes A_\ell} \otimes 1_{A_n \otimes A_m} \mapsto w_r,
  \end{equation}
  which is well defined by~\eqref{eq:wr-commutation}.  This map is surjective, and thus is an isomorphism by dimension considerations.
\end{proof}

\begin{lem} \label{lem:Hecke-like-coprod-twisting}
  We have an isomorphism of functors $\Psi^{\otimes 2} \Delta \Psi^{-1} \cong S_{12} \Delta$ on $A\md$ (hence also on $A\pmd$).
\end{lem}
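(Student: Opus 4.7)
The plan is to reduce the asserted isomorphism of functors to an isomorphism of $(A_k \otimes A_\ell, A_n)$-superbimodules, one for each decomposition $k + \ell = n$. Since $\Res^{A_n}_{A_k \otimes A_\ell}(M) \cong {}_{(k,\ell)}(A_n) \otimes_{A_n} M$, and left (resp.\ right) twistings of the output module correspond to left (resp.\ right) twistings of the representing bimodule, the $(k,\ell)$-component of $\Psi^{\otimes 2} \Delta \Psi^{-1}$ is represented by the bimodule ${}^{\psi_k \otimes \psi_\ell}(A_n)^{\psi_n}$, on which $(a_k \otimes a_\ell)$ acts on the left as multiplication by $\rho_{k,\ell}(\psi_k(a_k) \otimes \psi_\ell(a_\ell))$ and $c \in A_n$ acts on the right as multiplication by $\psi_n(c)$. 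The $(k,\ell)$-component of $S_{12} \Delta$ is represented by $A_n$ with the standard right action and with left action $(a_k \otimes a_\ell) \cdot x = (-1)^{\bar a_k \bar a_\ell}\,\rho_{\ell,k}(a_\ell \otimes a_k)\,x$.

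The core step is to establish the compatibility identity
\[
\psi_n \circ \rho_{k,\ell} \;=\; \rho_{\ell,k} \circ \mathrm{sw}_{k,\ell} \circ (\psi_k \otimes \psi_\ell),
\]
where $\mathrm{sw}_{k,\ell}\colon A_k \otimes A_\ell \to A_\ell \otimes A_k$ is the super-swap $a \otimes b \mapsto (-1)^{\bar a \bar b}\,b \otimes a$. Since both sides are superalgebra homomorphisms $A_k \otimes A_\ell \to A_n$, it suffices to verify the identity on a generating set: elements of $B$ placed in a single tensor slot of $B^{\otimes k}$ or $B^{\otimes \ell}$, together with the simple transpositions in $S_k$ and $S_\ell$. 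The verification relies on the explicit formula for $\psi_n$ from Lemma~\ref{lem:wreath-Frobenius}, together with the observations that reversing a tensor of $n$ factors split as $k + \ell$ decomposes as a block swap followed by reversal within each block (contributing the supersign $(-1)^{\bar a_k \bar a_\ell}$), and that the conjugation $\tau \mapsto w_0^{(n)} \tau\,(w_0^{(n)})^{-1}$ restricts on $S_k \times S_\ell \hookrightarrow S_n$ to the block swap composed with the analogous conjugations on each factor.

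The same argument with $\psi_B$ replaced by $\psi_B^{-1}$ (equivalently, applying $\psi_n^{-1}$ to the identity and substituting $a_k \mapsto \psi_k^{-1}(a_k)$, $a_\ell \mapsto \psi_\ell^{-1}(a_\ell)$) yields
\[
\psi_n^{-1}\bigl(\rho_{k,\ell}(\psi_k(a_k) \otimes \psi_\ell(a_\ell))\bigr) \;=\; (-1)^{\bar a_k \bar a_\ell}\,\rho_{\ell,k}(a_\ell \otimes a_k).
\]
I then propose the candidate bimodule map $f(x) := \psi_n^{-1}(x)$ from ${}^{\psi_k \otimes \psi_\ell}(A_n)^{\psi_n}$ to the bimodule representing the $(k,\ell)$-component of $S_{12} \Delta$. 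Right $A_n$-equivariance follows at once from $f(x\,\psi_n(c)) = \psi_n^{-1}(x)\,c$, and left $(A_k \otimes A_\ell)$-equivariance is exactly the content of the displayed identity. Since $\psi_n^{-1}$ is a bijection, $f$ is the required isomorphism of superbimodules, producing the functor isomorphism on $A\grm$; the statement for $A\grp$ is immediate, as the underlying bimodules do not depend on the ambient subcategory.

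The main obstacle is sign bookkeeping in the verification of the compatibility identity. The pure tensor-block case on elements of $B^{\otimes n}$ is routine, but the mixed case (one factor a simple transposition, the other a tensor of elements of $B$) and the matching of the signs $(-1)^\sigma$ attached to simple transpositions across $\psi_n$, $\psi_k$, and $\psi_\ell$ require careful tracking.
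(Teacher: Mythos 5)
Your proposal is correct and is essentially the same argument as in the paper. Both proofs reduce the functor isomorphism to an isomorphism of $(A_k\otimes A_\ell, A_n)$-superbimodules representing the two functors, and both ultimately exhibit the same map — in the paper it is given by sending a cyclic generator to a cyclic generator, $(1_m^{\psi^{-1}}\otimes 1_n^{\psi^{-1}})\otimes 1_{m+n}^{\psi}\mapsto (1_n\otimes 1_m)\otimes 1_{m+n}$, which in your flattened coordinates is exactly $\psi_n^{-1}$. The only organizational difference is that you isolate the check of well-definedness into the clean algebra-level identity $\psi_n\circ\rho_{k,\ell}=\rho_{\ell,k}\circ\mathrm{sw}_{k,\ell}\circ(\psi_k\otimes\psi_\ell)$, which can be verified on generators because both sides are superalgebra homomorphisms; the paper performs the equivalent generator-by-generator bookkeeping directly in the bimodule computation. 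This is a modest but genuine streamlining: the compatibility identity makes transparent that the Nakayama automorphisms of $A_k$, $A_\ell$, $A_n$ all fit together in the expected way, and packages once and for all the sign cancellations that the paper handles in its hidden-details verification.
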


\begin{proof}
  It suffices to prove that, for $m, n \in \N$, we have an isomorphism of functors
  \begin{equation} \label{eq:Hecke-like-coprod-twisting}
    (\Psi_m \otimes \Psi_n) \circ \Res^{A_{m+n}}_{A_m \otimes A_n} \circ \Psi_{m+n}^{-1} \cong S_{12} \circ \Res^{A_{m+n}}_{A_n \otimes A_m}.
  \end{equation}
  Note that, for $k \in \N$, $\Psi_k^{-1} = A_k^{\psi} \otimes_{A_k} -$, where $A_k^\psi$ denotes $A_k$, considered as an $(A_k,A_k)$-superbimodule with the right action twisted by $\psi_k$.  Similarly, $\Psi_k = A_k^{\psi^{-1}} \otimes_{A_k} - $.  Thus, describing each functor in~\eqref{eq:Hecke-like-coprod-twisting} as tensoring on the left by the appropriate superbimodule, it suffices to prove that we have an isomorphism of superbimodules
  \begin{equation} \label{eq:bimodule-isom-comult}
    (A_m^{\psi^{-1}} \otimes A_n^{\psi^{-1}}) \otimes_{A_m \otimes A_n} A_{m+n}^\psi \cong S \otimes_{A_n \otimes A_m} A_{m+n},
  \end{equation}
  where $S$ is $A_n \otimes A_m$ considered as an $(A_m \otimes A_n, A_n \otimes A_m)$-supermodule via the obvious right action and with left action given by $(a_1 \otimes a_2, s) \mapsto (-1)^{\bar a_1 \bar a_2}(a_2 \otimes a_1) s$ for $s \in S$ and homogeneous $a_1 \in A_m$, $a_2 \in A_n$.

  For $k \in \N$, let $1_k$ denote the identity element of $A_k$ and $1_k^\psi$ denote this same element considered as an element of $A_k^\psi$ (and similarly, with $\psi$ replaced by $\psi^{-1}$).   It is straightforward to show that the map between the superbimodules in~\eqref{eq:bimodule-isom-comult} given by
  \[
    (1_m^{\psi^{-1}} \otimes 1_n^{\psi^{-1}}) \otimes 1_{m+n}^\psi \mapsto (1_n \otimes 1_m) \otimes 1_{m+n}.
  \]
  (and extended by linearity) is a well-defined isomorphism.
  \details{
  For $i \in \{1,\dotsc,m\}$, $j \in \{1,\dotsc,n\}$, and homogeneous $b \in B^{\otimes n}$, $b' \in B^{\otimes m}$, in $S \otimes_{A_n \otimes A_m} A_{m+n}$ we have
  \begin{gather*}
    (s_i \otimes 1) \left( (1_n \otimes 1_m) \otimes 1_{m+n} \right) = (1_n \otimes s_i) \otimes 1_{m+n} = \left( (1_n \otimes 1_m) \otimes 1_{m+n} \right) s_{n+i}, \\
    (1 \otimes s_j) \left( (1_n \otimes 1_m) \otimes 1_{m+n} \right) = (s_j \otimes 1_m) \otimes 1_{m+n} = \left( (1_n \otimes 1_m) \otimes 1_{m+n} \right) s_j, \\
    (b \otimes b') \left( (1_n \otimes 1_m) \otimes 1_{m+n} \right) = (-1)^{\bar b \bar b'}(b' \otimes b) \otimes 1_{m+n} = (-1)^{\bar b \bar b'}\left( (1_n \otimes 1_m) \otimes 1_{m+n} \right) (b' \otimes b),
  \end{gather*}
  and in $(A_m^{\psi^{-1}} \otimes A_n^{\psi^{-1}}) \otimes_{A_m \otimes A_n} A_{m+n}^\psi$ we have
  \begin{gather*}
    (s_i \otimes 1) \left( (1_m^{\psi^{-1}} \otimes 1_n^{\psi^{-1}}) \otimes 1^\psi_{m+n} \right) = (-1)^\sigma \left( (1_m^{\psi^{-1}} \otimes 1_n^{\psi^{-1}}) (s_{m-i} \otimes 1) \right) \otimes 1_{m+n}^\psi = \left( (1_m^{\psi^{-1}} \otimes 1_n^{\psi^{-1}}) \otimes 1_{m+n}^\psi \right) s_{n+i}, \\
    (1 \otimes s_j) \left( (1_m^{\psi^{-1}} \otimes 1_n^{\psi^{-1}}) \otimes 1^\psi_{m+n} \right) = (-1)^\sigma \left( (1_m^{\psi^{-1}} \otimes 1_n^{\psi^{-1}}) (1 \otimes s_{n-j}) \right) \otimes 1_{m+n}^\psi = \left( (1_m^{\psi^{-1}} \otimes 1_n^{\psi^{-1}}) \otimes 1_{m+n}^\psi \right) s_j
  \end{gather*}
  and
  \begin{multline*}
    (b \otimes b') \left( (1_m^{\psi^{-1}} \otimes 1_n^{\psi^{-1}}) \otimes 1_{m+n}^\psi \right) = \left( (1_m^{\psi^{-1}} \otimes 1_n^{\psi^{-1}})(\psi_n(b) \otimes \psi_m(b')) \right) \otimes 1_{m+n}^\psi \\
    = (-1)^{\bar b \bar b'} \left( (1_m^{\psi^{-1}} \otimes 1_n^{\psi^{-1}}) \otimes 1_{m+n}^\psi \right) (b' \otimes b).
  \end{multline*}
  Thus we have a well defined map between the superbimodules in~\eqref{eq:bimodule-isom-comult} given by
  \[
    (1_m^{\psi^{-1}} \otimes 1_n^{\psi^{-1}}) \otimes 1_{m+n}^\psi \mapsto (1_n \otimes 1_m) \otimes 1_{m+n}.
  \]
  This map is clearly surjective.  Since we can construct the inverse map analogously, it is an isomorphism.
  }
\end{proof}

\begin{lem} \label{lem:wreath-dualizing}
  We have isomorphisms of functors $\nabla \cong \nabla S_{12}$ and $\Delta \cong S_{12} \Delta$ on $A\md$ (hence also on $A\pmd$).
\end{lem}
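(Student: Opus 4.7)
The plan is to construct both isomorphisms uniformly, using a single ``block-swap'' element $w = w_{m,n} \in S_{m+n} \subset A_{m+n}$ defined by $w(i) = i+n$ for $1 \le i \le m$ and $w(i) = i-m$ for $m < i \le m+n$. The key fact, which I would establish by a direct computation on generators, is the conjugation formula
\[
  w \cdot \rho_{m,n}(a \otimes b) = \rho_{n,m}\bigl((-1)^{\bar a \bar b}\, b \otimes a\bigr) \cdot w \qquad \text{for all } a \in A_m,\ b \in A_n.
\]
On $B^{\otimes m} \otimes B^{\otimes n}$, conjugation by $w$ swaps the two blocks of $B$-factors and therefore acts as a superpermutation, producing exactly the sign $(-1)^{\bar a \bar b}$. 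On $S_m \times S_n$ the verification is sign-free and uses that $w$ bijects $\{1,\dots,m\}$ onto $\{n+1,\dots,n+m\}$ and $\{m+1,\dots,m+n\}$ onto $\{1,\dots,n\}$, so conjugation by $w$ interchanges the natural copies of $S_m \times S_n$ and $S_n \times S_m$ inside $S_{m+n}$.

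For $\nabla \cong \nabla S_{12}$, on each $M \in (A_m \otimes A_n)\md$ define
\[
  \Xi_M \colon A_{m+n} \otimes_{A_m \otimes A_n} M \to A_{m+n} \otimes_{A_n \otimes A_m} S_{12}(M),\quad x \otimes v \mapsto xw^{-1} \otimes v.
\]
Well-definedness reduces to the rearranged conjugation formula $\rho_{m,n}(a \otimes b) w^{-1} = (-1)^{\bar a \bar b} w^{-1} \rho_{n,m}(b \otimes a)$, which exactly cancels the super-sign $(-1)^{\bar a \bar b}$ introduced in passing from the $(A_m \otimes A_n)$-action on $M$ to the $(A_n \otimes A_m)$-action on $S_{12}(M)$. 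Left $A_{m+n}$-linearity and naturality in $M$ are immediate from the formula, and the inverse is $y \otimes v \mapsto yw \otimes v$.

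For $\Delta \cong S_{12} \Delta$ the argument is parallel. For each decomposition $k = m+n$ and each $L \in A_k\md$, define
\[
  \Psi_L \colon \Res^{A_k}_{A_m \otimes A_n}(L) \to S_{12}\bigl(\Res^{A_k}_{A_n \otimes A_m}(L)\bigr),\quad u \mapsto wu.
\]
The source carries the action $(a \otimes b) \cdot u = \rho_{m,n}(a \otimes b)\, u$, while the target carries $(a \otimes b) \cdot u = (-1)^{\bar a \bar b}\rho_{n,m}(b \otimes a)\, u$; that $\Psi_L$ intertwines these is exactly the conjugation formula, multiplied on the right by $u$. Invertibility ($\Psi_L^{-1}(v) = w^{-1}v$) and naturality in $L$ are immediate, and the restriction to $A\pmd$ is automatic since induction and restriction preserve projectivity by axiom~\ref{item:TA3}. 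The main obstacle is establishing the conjugation formula and carefully bookkeeping the super-signs so that the signs produced by moving $w$ past elements of $A_{m+n}$ cancel those defining $S_{12}$; once the formula is in hand, both isomorphisms follow by the same mechanism.
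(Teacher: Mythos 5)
Your proposal is correct and takes essentially the same approach as the paper's proof: the paper also defines the block-swap element $w$ (yours is the inverse of the paper's, a purely notational difference), uses the identity $ws_i = s_{w(i)}w$ plus the superpermutation action to commute $w$ past $\rho_{m,n}(A_m \otimes A_n)$, and realizes the isomorphism $\nabla \cong \nabla S_{12}$ as $a \mapsto aw$ (phrased at the bimodule level as an isomorphism $A_{m+n} \to A_{m+n} \otimes_{A_n \otimes A_m} S$). You spell out the conjugation formula and the $\Delta \cong S_{12}\Delta$ half explicitly where the paper leaves them as ``straightforward'' and ``analogous,'' which is sound and arguably more readable.
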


\begin{proof}
  Let $m, n \in \N$ and define $w \in S_{m+n}$ by
  \[
    w(i) =
    \begin{cases}
      i+m & \text{if } 1 \le i \le n, \\
      i-n & \text{if } n < i \le m+n.
    \end{cases}
  \]
  Then we have $w s_i = s_{w(i)} w$ for all $i=1,\dotsc,m-1,m+1,\dotsc,m+n-1$.  Now, let $S$ be $A_m \otimes A_n$ considered as an $(A_n \otimes A_m, A_m \otimes A_n)$-supermodule via the obvious right action and with left action given by $(a_1 \otimes a_2, s) \mapsto (-1)^{\bar a_1 \bar a_2} (a_2 \otimes a_1) s$, for $s \in S$ and homogeneous $a_1 \in A_n$, $a_2 \in A_m$.  So we have an isomorphism of functors $S_{12} \cong S \otimes -$.  It is straightforward to verify that the map
  \[
    A_{m+n} \to A_{m+n} \otimes_{A_n \otimes A_m} S,\quad a \mapsto a w \otimes (1 \otimes 1),
  \]
  is an isomorphism of $(A_{m+m}, A_m \otimes A_n)$-superbimodules.  It follows that $\nabla \cong \nabla S_{12}$.  The proof that $\Delta \cong S_{12} \Delta$ is analogous.
\end{proof}

\begin{cor} \label{cor:wreath-strong-compatible}
  Towers of wreath product algebras are strong and compatible.
\end{cor}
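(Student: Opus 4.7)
The plan is to assemble the corollary from the preceding results, with essentially no new computation needed. By Proposition~\ref{prop:wreath-tower-strong}, the tower $A$ is already known to be strong, with trivial twist (so $\chi=0$, $d=\epsilon=0$, and $\kappa=0$), and with conjugation $\Psi$ given by the Nakayama automorphism. So the only things left to verify are that $A$ is dualizing, and that the resulting dual pair is compatible in the sense of Definition~\ref{def:compatible-pair}.

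For dualizing, I would invoke Proposition~\ref{prop:dualizing-twisting-isom}: it suffices to check the equivalent condition~\eqref{eq:twisted-coproduct-identity}, namely that $\Psi^{\otimes 2}\Delta\Psi^{-1}\cong \Delta$ on $A\grp$. This is where the two preparatory lemmas come into play in tandem. Lemma~\ref{lem:Hecke-like-coprod-twisting} gives an isomorphism of functors $\Psi^{\otimes 2}\Delta\Psi^{-1}\cong S_{12}\Delta$, and Lemma~\ref{lem:wreath-dualizing} gives $\Delta\cong S_{12}\Delta$. Composing these yields $\Psi^{\otimes 2}\Delta\Psi^{-1}\cong\Delta$, as required. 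By Proposition~\ref{prop:dualizing-twisting-isom}, the tower is then dualizing with a $(q^d\pi^\epsilon,0,\kappa)$-twisted Hopf pairing; since we have trivial twist and $\kappa=0$, this is just an ordinary Hopf pairing (with $\gamma'=\gamma''=0$).

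For compatibility, one needs to exhibit a $\chi$ making $\cG(A)$ a $(q^d\pi^\epsilon,\chi)$-Hopf algebra with $\chi'=-(\gamma')^T$. But trivial twist means we may take $\chi=0$, so both $\chi'$ and $(\gamma')^T$ vanish, and~\eqref{eq:compatibility} holds trivially. This completes the verification.

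The main observation is that neither step presents any genuine obstacle once Lemmas~\ref{lem:Hecke-like-coprod-twisting} and~\ref{lem:wreath-dualizing} are in hand: the former absorbs the sign/twist data built into the Nakayama automorphism of the wreath product, while the latter encodes the essential graded-commutativity of the external product induced from $S_m\times S_n\hookrightarrow S_{m+n}$. Dualizing and compatibility then fall out formally from Proposition~\ref{prop:dualizing-twisting-isom} combined with the triviality of the twist. The real work has already been done in Proposition~\ref{prop:wreath-tower-strong} (axioms~\ref{item:S1} and~\ref{item:S2}) and in the two lemmas; the corollary is the bookkeeping that ties them together.
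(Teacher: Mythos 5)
Your proof is correct and follows essentially the same route as the paper: strongness from Proposition~\ref{prop:wreath-tower-strong}, dualizing via Proposition~\ref{prop:dualizing-twisting-isom} by composing Lemma~\ref{lem:Hecke-like-coprod-twisting} with Lemma~\ref{lem:wreath-dualizing} to get $\Psi^{\otimes 2}\Delta\Psi^{-1}\cong\Delta$, and compatibility from the triviality of the twist forcing $\chi'=\gamma'=0$. You simply spell out the intermediate steps that the paper compresses into one sentence.
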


\begin{proof}
  It follows immediately from Proposition~\ref{prop:wreath-tower-strong}, Proposition~\ref{prop:dualizing-twisting-isom}, Lemma~\ref{lem:Hecke-like-coprod-twisting}, and Lemma~\ref{lem:wreath-dualizing} that towers of wreath product algebras are strong and dualizing.  Since we saw in the proof of Proposition~\ref{prop:wreath-tower-strong} that $\chi'=\gamma'=0$, they are also compatible.
\end{proof}

\begin{rem}
  The towers of wreath product algebras described in this section should yield categorifications of (quantum) lattice Heisenberg algebras.  For instance, this can be seen in~\cite{CL12} for a particular choice of $B$.  For a description of the (quantum) lattice Heisenberg algebras as Heisenberg doubles, see~\cite[\S\S 7, 8]{RS14}.  It would be interesting to work out the details of this categorification for general $B$.
\end{rem}

%%%%%%%%%%%%%%%%%%%%%%%%%%%%%
%
\section{The tower of nilCoxeter graded superalgebras} \label{sec:Weyl}
%
%%%%%%%%%%%%%%%%%%%%%%%%%%%%%

In this section we specialize the constructions of Section~\ref{sec:categorification} to the tower of nilCoxeter graded superalgebras of type $A$.  We will see that we recover a categorification of the polynomial representation of the quantum Weyl algebra.  Such a categorification was sketched in~\cite[\S4.1]{Kho01} (see Remark~\ref{rem:Khovanov-quantum-Weyl}).  Recall that $\F$ is an arbitrary field of characteristic not equal to two.

\begin{defin}[NilCoxeter graded superalgebra]\label{def:nilcox-super}
  Define the \emph{nilCoxeter graded superalgebra} with parameters $(d,\epsilon)\in\Z\times\Z_2$, denoted by $\Ner_n$, to be the graded unital $\F$-superalgebra with generators $u_1,\dotsc,u_{n-1}$, such that $(|u_i|, \bar u_i)=(d,\epsilon)$, for $i=1,\dotsc,n-1$, subject to the relations
  \begin{gather*}
    u_i^2 = 0\quad \text{for } i=1,2,\dotsc,n-1, \\
    u_iu_j =(-1)^\epsilon u_ju_i\quad \text{for } i,j = 1,\dotsc,n-1 \text{ such that } |i-j|>1, \\
    u_iu_{i+1}u_i = u_{i+1}u_iu_{i+1}\quad \text{for } i=1,2,\dotsc,n-2.
  \end{gather*}
\end{defin}

Since the ideal generated by all the $u_i$ is nilpotent, $\Ner_n$ has, up to grading shift and isomorphism, one simple supermodule, which is one dimensional, and on which the generators $u_i$ all act by zero.   We denote this simple supermodule, in degree zero, by $L_n$.  Since it is one dimensional, it is of type $\mathsf{M}$.  By \cite[Prop.~12.2.12]{Kle05}, the projective cover of $L_n$ is $P_n := \Ner_n$, considered as an $\Ner_n$-supermodule by left multiplication.

The reason we need the factor of $(-1)^\epsilon$ in the relations of Definition \ref{def:nilcox-super} is to satisfy axiom~\ref{item:TA2}.  Indeed, we have a map of graded superalgebras
\begin{gather*}
  \rho_{n,m} \colon \Ner_n\otimes \Ner_m\to \Ner_{n+m}, \\
  u_i\otimes 1\mapsto u_i,\quad 1\otimes u_j \mapsto u_{n+j},\quad 1 \le i \le n-1,\ 1 \le j \le m-1.
\end{gather*}
\details{
  If $1 \le i \le n-1$ and $\ 1 \le j \le m-1$, we have
  \[
    u_i u_{n+j} = \rho_{n,m}((1\otimes u_i)(u_j\otimes 1))=\rho_{n,m} \left( (-1)^{\epsilon^2}u_j\otimes u_i \right) = \rho_{n,m} \left( (-1)^\epsilon (u_j \otimes 1) (1 \otimes u_i) \right) = (-1)^\epsilon u_{n+j} u_i.
  \]
}
Axiom~\ref{item:TA1} is clearly satisfied, and~\ref{item:TA3} holds since $\Ner_{n+m}$ is a free left and right supermodule over $\Ner_{n}\otimes \Ner_m$ of rank ${n+m \choose n}$.  Since, for all $n$, $\dim \Hom_{\Ner_n}(P_n,L_n)=1$ and $L_n$ is of type $\mathsf{M}$, \ref{item:TA4} is also satisfied and $\kk = \Z_{q,\pi}$.

For each $w\in S_n$, and two reduced expressions $w=s_{i_1}\cdots s_{i_r}=s_{j_1}\cdots s_{j_r}$, we have,  by the relations in Definition \ref{def:nilcox-super} that $u_{i_1}\cdots u_{i_r}=\pm u_{j_1}\cdots u_{j_r}$. We can then choose a basis $\{u_w~|~w\in S_n\}$ of $\Ner_n$ that satisfies the following property:
\begin{equation}
  u_v u_w =
  \begin{cases}
    \alpha(v,w) u_{vw} & \text{ if } \ell(v)+\ell(w)=\ell(v+w), \\
    0 & \text{ otherwise},
  \end{cases}
\end{equation}
where $\ell(w)$ is the length of an element $w \in S_n$, and $\alpha \colon S_n\times S_n \to \{\pm 1\} \subseteq \F^\times$ is a nontrivial $2$-cocycle when $\epsilon=1$ and is the trivial cocycle when $\epsilon=0$.
\details{
  When $\epsilon=0$, this is just the usual construction of a basis of the nilCoxeter algebra.  When $\epsilon \neq 0$, it is clear from the relations that we can choose the coefficients to be $\pm 1$.  The function $\alpha$ must be a $2$-cocycle in order for the multiplication to be associative.  Indeed, if the lengths all add up, we have
  \begin{align*} (u_vu_w)u_t &= u_v(u_wu_t) \\
    \iff \alpha(v,w)u_{vw}u_t &= u_v\alpha(w,t)u_{wt} \\
    \iff \alpha(v,w)\alpha(vw,t)u_{vwt} &= \alpha(w,t)\alpha(v,wt)u_{vwt}.
  \end{align*}
}

\begin{lem} \label{lem:grad-nilcox-frob}
  We have that $\Ner_n$ is a Frobenius graded superalgebra of degree $\left(-d{ n \choose 2},\epsilon{n\choose 2}\right)$.
  The Nakayama automorphism is given on the generators by $\psi_n(u_i)=u_{n-i}$.
\end{lem}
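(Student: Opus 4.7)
The plan is to exhibit an explicit trace map and verify the conditions in Definition~\ref{def:frobenius}\eqref{def-item:Frob-functional}. Define $\tr_n \colon \Ner_n \to \F$ by $\tr_n(u_w) = \delta_{w,w_0}$, where $w_0$ denotes the longest element of $S_n$. Since $u_{w_0}$ is homogeneous of degree $\bigl(d\binom{n}{2}, \epsilon\binom{n}{2}\bigr)$ and is the unique basis element of that degree, $\tr_n$ is a well-defined homogeneous $\F$-linear map of the correct degree.

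The heart of the proof is showing that $\ker\tr_n$ contains no nonzero left ideal. Given a nonzero $a = \sum_w a_w u_w \in \Ner_n$, I would choose $w \in S_n$ with $a_w \neq 0$ and $\ell(w)$ maximal, and set $v := w_0 w^{-1}$, which satisfies $\ell(v) = \binom{n}{2} - \ell(w)$ (a standard property of $w_0$). Multiplying on the left by $u_v$ gives $u_v a = \sum_{w'} a_{w'}\, u_v u_{w'}$. For each $w'$ with $a_{w'} \neq 0$, the summand $u_v u_{w'}$ is nonzero only when $\ell(v) + \ell(w') = \ell(vw')$, and it contributes to the $u_{w_0}$ coefficient only when $vw' = w_0$, i.e., $w' = w$. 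For $w' \neq w$ with $\ell(w') \leq \ell(w)$, either the product vanishes or it equals $\pm u_{vw'}$ with $\ell(vw') < \binom{n}{2}$. Hence $\tr_n(u_v a) = \pm a_w \neq 0$, which shows any left ideal containing $a$ escapes $\ker \tr_n$.

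For the Nakayama automorphism, I would first check that $u_i \mapsto u_{n-i}$ extends to a superalgebra automorphism $\psi_n$ by verifying it preserves the three defining relations of Definition~\ref{def:nilcox-super}; this is immediate since the relations are invariant under $i \mapsto n-i$. I would then verify the characterizing identity
\[
  \tr_n(ab) = (-1)^{\bar a \bar b}\, \tr_n(b \psi_n(a))
\]
by reducing, via bilinearity and the fact that $\Ner_n$ is generated by $u_1,\dots,u_{n-1}$, to the case $a = u_i$ and $b = u_w$ a basis element. The crucial combinatorial input is the identity $s_i w_0 = w_0 s_{n-i}$ in $S_n$, which forces both sides to vanish unless $w = s_i w_0 = w_0 s_{n-i}$, and in that exceptional case, both $u_i u_w$ and $u_w u_{n-i}$ are length-additive products landing in $\F\, u_{w_0}$, so the two traces agree up to a sign that I would pin down explicitly.

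The main obstacle is the sign bookkeeping in the $\epsilon = 1$ case, where the cocycle $\alpha$ arising from the choice of basis $\{u_w\}$ is nontrivial and must be compared on the two sides of the Nakayama identity. I would handle this by fixing, once and for all, a reduced expression for $w_0$ that begins with $s_i$ and ends with $s_{n-i}$ (using $s_i w_0 = w_0 s_{n-i}$), defining $u_{w_0}$ accordingly, and choosing the remaining $u_w$ as subword products; this synchronises the cocycle values on the two sides and makes the signs match the parity $(-1)^{\bar u_i \bar u_w} = (-1)^{\epsilon(\binom{n}{2}-1)}$ coming from the superization.
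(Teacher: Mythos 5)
Your proposal follows the paper's own route almost exactly: the same trace map $\tr_n(u_w)=\delta_{w,w_0}$, the same ``multiply by $u_{w_0 w^{-1}}$ for a maximal-length $w$'' argument for nondegeneracy, and the identity $s_i w_0 = w_0 s_{n-i}$ for the Nakayama computation. The one place where you diverge — and where your sketch has a small flaw — is the sign bookkeeping in the $\epsilon=1$ case. You propose to ``fix, once and for all, a reduced expression for $w_0$ that begins with $s_i$ and ends with $s_{n-i}$,'' but this cannot be done once for all $i$: a fixed reduced expression of $w_0$ begins with a specific simple reflection, so you would need a different basis for each $i$, while the trace map and basis $\{u_w\}$ must be chosen before $i$ is. This can be patched (the Nakayama automorphism depends only on the bilinear form up to scalar, and changing signs in the cocycle basis only rescales $\tr_n$ by $\pm 1$, which leaves $\psi_n$ unchanged), but you do not say this, and it is cleaner to do what the paper does: establish the uniform commutation rule $u_w u_j = (-1)^{\epsilon\ell(w)}\,u_{w(j)}\,u_w$ whenever $w(j+1)=w(j)+1$, by induction on $\ell(w)$, and then specialize to $w$ with $s_i w = w_0$. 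With that lemma in hand, $(-1)^{\epsilon\ell(w)} = (-1)^{\bar u_i\bar u_w}$ falls out immediately (since $\bar u_i=\epsilon$ and $\bar u_w=\epsilon\ell(w)$), and the Nakayama identity follows. Your claimed sign $(-1)^{\epsilon(\binom{n}{2}-1)}$ does agree with $(-1)^{\epsilon\ell(w)}$ when $\ell(w)=\binom n2 -1$, so the target is right; it is only the justification that needs to be made uniform rather than basis-dependent.
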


\begin{proof}
  Define a trace map $\tr_n \colon \Ner_n\to \F$ by
  \begin{equation}
    \tr_n(u_w) =
    \begin{cases}
      1 & \text{ if } w=w_0, \\
      0 & \text{ if  }w\neq w_0.
    \end{cases}
  \end{equation}
  We need to show that $\ker\tr_n$ does not contain any nonzero left ideal.  Let $I$ be such an ideal and $0\neq a=\sum_{w\in S_n}a_w u_w\in I$.  Choose an element $w'$ of maximal length such that $a_{w'} \neq 0$. Then $\tr_n(u_{w_0(w')^{-1}}a)=\tr_n(\pm a_{w'}u_{w_0})=\pm a_{w'}\neq 0$, which contradicts the hypothesis. The degree of the algebra is simply the negative of the degree of $u_{w_0}$, which is $\left(d{ n \choose 2},\epsilon{n\choose 2}\right)$ because the length of $w_0$ is ${n\choose 2}$.

  Now, for all $w\in S_n$ such that $s_iw\neq w_0$, we have that  $\tr_n(u_iu_w) = 0 = \tr_n(u_wu_{n-i})$.  In the case where $s_iw=w_0=ws_{n-i}$, we have $u_iu_w = (-1)^{\epsilon\ell(w)}u_wu_{n-i}$.
  \details{
    In the details of the proof of Proposition~\ref{prop:graded-nilhecke-strong}, we show that
    \[
      u_w u_j = (-1)^{\epsilon\ell(w)}u_{w(j)}u_w\quad\text{ when }w(j+1)=w(j)+1.
    \]
    Now, setting $j=n-i$, it is easy to check that for the choice of $w$ such that $w_0=ws_{n-i}$, we have indeed that $w(n-i)=i$ and $w(n-i+1)=i+1=w(n-i)+1$.
  }
  Hence, for all $w\in S_n$,
  \[
    \tr_n(u_iu_w)=(-1)^{\bar{u}_i\bar{u}_w}\tr_n(u_wu_{n-i}),
  \]
  and the statement about the Nakayama automorphism follows.
\end{proof}

\begin{prop}\label{prop:graded-nilhecke-strong}
  The tower $\Ner_n$ is strong with twist $(\chi,d,\epsilon)$, conjugation $\Psi$ and shift $(\delta,\sigma)$, where $\chi'(n,m)=nm$ for $n,m \in \N$, $\chi''=0$, $\Psi$ is given by the Nakayama automorphism (see Proposition~\ref{prop:Frobenius-twisted}), $\delta = (\delta_n)_{n \in \N} = \left( d {n \choose 2} \right)_{n \in \N}$ and $\sigma = (\sigma_n)_{n \in \N} = \left( \epsilon {n \choose 2} \right)_{n \in \N}$.
\end{prop}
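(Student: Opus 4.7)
The plan is to verify axioms~\ref{item:S1} and~\ref{item:S2} of Definition~\ref{def:strong}. For~\ref{item:S1}, Lemma~\ref{lem:grad-nilcox-frob} exhibits each $\Ner_n$ as a Frobenius graded superalgebra of degree $(-d\binom{n}{2}, \epsilon\binom{n}{2})$ with Nakayama automorphism $\psi_n$, so Proposition~\ref{prop:Frobenius-twisted} immediately gives conjugate shifted adjointness with the stated conjugation $\Psi$ and shift $\{\delta,\sigma\}$. The biadditive map $\kappa(n,m) := nm$ then satisfies $\kappa_\delta = d\kappa$ and $\kappa_\sigma = \epsilon\kappa$, via the identity $\binom{n+m}{2} - \binom{n}{2} - \binom{m}{2} = nm$.

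For~\ref{item:S2}, I follow the strategy of Proposition~\ref{prop:wreath-tower-strong} closely. For fixed $n, m, k, \ell$ with $n+m = k+\ell = K$, the isomorphism~\eqref{eq:strong-isom} reduces to an isomorphism of $(\Ner_k \otimes \Ner_\ell, \Ner_n \otimes \Ner_m)$-superbimodules
\[ \ts
  {_{(k,\ell)}(\Ner_K)_{(n,m)}} \cong \bigoplus_{r=\max\{0,n-\ell\}}^{\min\{n,k\}} M_r\{d(n-r)(k-r),\epsilon(n-r)(k-r)\},
\]
with each $M_r$ built as in that proof but using $\Ner$ in place of $A$. The block-swap permutations $w_r \in S_K$ have length $(n-r)(k-r)$ by an inversion count, so $u_{w_r} \in \Ner_K$ has $(\Z \times \Z_2)$-degree matching the shift. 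The left-hand side decomposes by double cosets $(S_k \times S_\ell)\backslash S_K/(S_n \times S_m)$ as $\bigoplus_r M_r'$, where $M_r'$ has $\F$-basis $\{u_w : w \in C_r\}$ with $C_r$ the double coset of $w_r$; this uses that every $w \in C_r$ has a length-additive factorization $w = x w_r y$ with $x \in S_k \times S_\ell$ and $y \in S_n \times S_m$. A dimension count identical to that of Proposition~\ref{prop:wreath-tower-strong} gives $\dim_\F M_r = |C_r| = \dim_\F M_r'$.

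I construct the isomorphism $M_r\{d\ell(w_r), \epsilon\ell(w_r)\} \to M_r'$ as the bimodule map sending the distinguished generator to $u_{w_r}$, with $u_{w_r}$ defined via a fixed reduced expression. Well-definedness reduces to verifying
\[
  (a_1 \otimes a_2 \otimes a_3 \otimes a_4)_{(r,k-r,n-r,\ell+r-n)} u_{w_r} = (-1)^{\bar a_2 \bar a_3} u_{w_r} (a_1 \otimes a_3 \otimes a_2 \otimes a_4)_{(r,n-r,k-r,\ell+r-n)}
\]
in $\Ner_K$. The main obstacle is the sign analysis for $\epsilon = 1$: both sides are a priori nonzero scalar multiples of the same basis element, and matching the scalars requires careful bookkeeping through the $2$-cocycle $\alpha$ and the super-commutation $u_i u_j = (-1)^\epsilon u_j u_i$ for $|i-j| > 1$. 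Since any reduced expression for $w_r$ uses only generators $u_j$ with $r < j < n+k-r$, the generators of $a_1$ and $a_4$ super-commute with $u_{w_r}$ and contribute matching signs on both sides; for $a_2$ and $a_3$, conjugation by $w_r$ in $S_K$ carries their LHS supports to their RHS supports, and the composite super-sign from the rearrangement reduces to $(-1)^{\bar a_2 \bar a_3}$, with any residual sign depending only on $r, n, k, \ell$ absorbed by rescaling $u_{w_r}$ by $\pm 1$. Once this identity is established, surjectivity is immediate and equality of dimensions finishes the proof.
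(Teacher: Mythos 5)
Your treatment of~\ref{item:S1} is correct and follows the paper's proof exactly: Lemma~\ref{lem:grad-nilcox-frob} plus Proposition~\ref{prop:Frobenius-twisted}, and then $\kappa(n,m)=nm$ via the binomial identity. For~\ref{item:S2}, you correctly identify that the isomorphism reduces to a degree-shifted superbimodule isomorphism $M_r\{d(n-r)(k-r),\epsilon(n-r)(k-r)\} \cong M_r'$ and that one should map the generator to $u_{w_r}$. However, the commutation relation you write down is not what well-definedness requires, and it is in fact false in $\Ner_K$ when $\epsilon=1$ and $\ell(w_r)=(n-r)(k-r)$ is odd.

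The issue is the $\Z_2$-degree of $u_{w_r}$. Since $\bar u_{w_r}=\epsilon\ell(w_r)$, the bimodule map $M_r\{d\ell(w_r),\epsilon\ell(w_r)\}\to M_r'$ has nonzero parity, and by the definition of $\HOM$ in~\eqref{bighom} a map of that parity satisfies $f(b\cdot m)=(-1)^{\epsilon\ell(w_r)\bar b}\,b\cdot f(m)$ for the left action (no sign on the right). Tracking this through forces the required identity in $\Ner_K$ to be
\[
  u_{w_r}(a_1 \otimes a_2 \otimes a_3 \otimes a_4) = (-1)^{\epsilon\ell(w_r)(\bar a_1+\bar a_2+\bar a_3+\bar a_4)\,+\,\bar a_2\bar a_3}\,(a_1 \otimes a_3 \otimes a_2 \otimes a_4)\,u_{w_r},
\]
for $a_1\in\Ner_r$, $a_2\in\Ner_{n-r}$, $a_3\in\Ner_{k-r}$, $a_4\in\Ner_{\ell+r-n}$. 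The extra factor $(-1)^{\epsilon\ell(w_r)(\bar a_1+\bar a_2+\bar a_3+\bar a_4)}$ depends on the parities of the $a_i$; it is not a constant in $r,n,k,\ell$ and therefore cannot be absorbed by rescaling $u_{w_r}$, contrary to what you assert. This parity-dependent sign is exactly the content of the commutation identity $u_w u_i = (-1)^{\epsilon\ell(w)}u_{w(i)}u_w$ for $w(i+1)=w(i)+1$, which one must establish by induction on $\ell(w)$; the sign you wrote (with only $(-1)^{\bar a_2\bar a_3}$) amounts to dropping the factor $(-1)^{\epsilon\ell(w)}$ entirely. Without this piece, the verification of~\ref{item:S2} does not go through in the genuinely super case.
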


\begin{proof}
  By Lemma~\ref{lem:grad-nilcox-frob} and Proposition~\ref{prop:Frobenius-twisted}, we see that induction is conjugate shifted right adjoint to restriction with shift $(\delta,\sigma)$.  We have
  \begin{equation} \label{eq:sigma-delta-biadd}
    \kappa_\delta(n,m) = \delta_{n+m}-\delta_n-\delta_m = d {n+m \choose 2} - d {n \choose 2} - d {m \choose 2} = dnm.
  \end{equation}
  Similarly, $\kappa_\sigma(n,m)=\epsilon nm$.  Hence condition~\ref{item:S1} is satisfied by choosing $\kappa(n,m)=nm$.

  The verification of~\ref{item:S2} is almost identical to its verification in the proof of Proposition~\ref{prop:wreath-tower-strong}.  The only modification is that one takes $B=\F$ and replaces the superbimodule isomorphism~\eqref{eq:wreath-S2-isom} by a shifted version:
  \begin{equation}\label{eq:spin-nilcox-bimod} \ts
    _{(k,\ell)}(\Ner_K)_{(n,m)} \cong \bigoplus_{r=n-\ell}^{\min\{n,k\}} M_r \{d(n-r)(k-r), \epsilon (n-r)(k-r)\}.
  \end{equation}
  Similarly, the map~\eqref{eq:wreath-strong-tower-bimodule-map} is replaced by the degree $(d(n-r)(k-r),\epsilon(n-r)(k-r))$ superbimodule map
  \[
    1_{\Ner_k \otimes \Ner_\ell} \otimes 1_{\Ner_n \otimes \Ner_m} \mapsto u_{w_r}.
  \]
  The proof that this is a superbimodule isomorphism uses, instead of~\eqref{eq:wr-commutation}, the relation
  \begin{equation} \label{eq:uwr-commutation}
    u_{w_r} (a_1 \otimes a_2 \otimes a_3 \otimes a_4) =(-1)^{\epsilon (n-r)(k-r)(\bar a_1 + \bar a_2 + \bar a_3 + \bar a_4) + \bar a_2 \bar a_3} (a_1 \otimes a_3 \otimes a_2 \otimes a_4) u_{w_r}
  \end{equation}
  for all $a_1 \in \Ner_r$, $a_2 \in \Ner_{n-r}$, $a_3 \in \Ner_{k-r}$, $a_4 \in \Ner_{\ell+r-n}$.
  \details{
    Note that $\ell(w_r) = (n-r)(k-r)$.  Relation~\eqref{eq:uwr-commutation} follows from the fact that
    \begin{gather*}
      u_{w_r} u_i = (-1)^{\epsilon\ell(w_r)}u_i u_{w_r} \quad \text{if } 1 \le i < r \text{ or } n+k-r < i < K, \\
      u_{w_r} u_i =(-1)^{\epsilon\ell(w_r)} u_{i-r+k} u_{w_r} \quad \text{if } r < i < n, \\
      u_{w_r} u_i =(-1)^{\epsilon\ell(w_r)} u_{i-n+r} u_{w_r} \quad \text{if } n < i < n+k-r.
    \end{gather*}
    To see this, it suffices to show that $u_w u_i = (-1)^{\epsilon\ell(w)}u_{w(i)}u_w$ if
    \[ \tag{$*$}
      w(i+1)=w(i)+1.
    \]
    When $\epsilon=0$, this is just the same as in \cite[proof of Prop.~4.3]{SY13}, hence we may assume that $\epsilon=1$.

    We prove the result by induction on $\ell(w)$.  If $\ell(w)=$1, the condition can only be satisfied if $w=s_j$ with $|i-j|>1$.  In this case, we have
    \[
      u_j u_i = -u_i u_j = (-1)^1 u_{s_j(i)} u_j.
    \]
    Next, suppose $\ell(w)=2$ and write $w=s_ks_j$.  If $s_j$ commutes with $s_i$, then so must $s_k$ in order to have $w(i+1) = w(i)+1$.  Hence, we are done by inductive hypothesis.  Otherwise, we have either $w=s_is_{i-1}$ or $w=s_is_{i+1}$.  If $w=s_i s_{i-1}$, then
    \[
      u_w u_i = (\pm u_iu_{i-1}) u_i = \pm u_{i-1} u_i u_{i-1} = u_{i-1}(\pm u_iu_{i-1}) = u_{w(i)}u_w = (-1)^2 u_{w(i)} u_w.
    \]
    The case $w=s_is_{i+1}$ is analogous.

    Finally, suppose $\ell(w) \geq 3$, and write $w = w's_j$ with $\ell(w') < \ell(w)$.  Notice that $j \neq i$.  There are two cases.

    \textbf{Case 1}: If $j\neq i-1,i+1$, then $(w's_j)(i+1)=(w's_j)(i)+1$ implies that $w'(i+1)=w'(i)+1$. We then get
    \begin{align*}
      u_wu_i&=\pm u_{w'}u_ju_i \\
      &=\pm u_{w'}(-1)u_iu_j \\
      &=\pm (-1)^{\ell(w')}(-1)u_{w'(i)}u_{w'}u_j \qquad \text{(by the inductive hypothesis)}\\
      &= (-1)^{\ell(w')+1}u_{w'(i)}(\pm u_{w'}u_j) \\
      &= (-1)^{\ell(w)}u_{w(i)}u_w
    \end{align*}

    \textbf{Case 2}: We will assume $j=i-1$.  The case $j=i+1$ is entirely analogous.  We then have $w=w' s_{i-1}$.  Hence, $w's_{i-1}(i+1)=w's_{i-1}(i)+1$, so that $w'(i+1)=w'(i-1)+1$.  Thus, either $w'(i)>w'(i+1)$ or $w'(i)<w'(i-1)$.  In both cases, $w''=w's_i$ has one less inversion than $w'$, hence $\ell(w')=\ell(w'')+1$ and $w'=w''s_i$. Also
    \[ \tag{$**$}
      w''(i)=w's_i(i)=w'(i+1)=w'(i-1)+1=w's_i(i-1)+1=w''(i-1)+1.
    \]
    Hence $w''$ satisfies ($*$) with $i-1$ in place of $i$, and $\ell(w'')=\ell(w)-2$. Also notice that
    \[
      w''(i-1) = w's_i(i-1) = w'(i-1) = ws_{i-1}(i-1) = w(i).
    \]
    We then have
    \begin{align*}
      u_wu_i &= \alpha(w',s_{i-1})u_{w'}u_{i-1}u_i \\
      &= \alpha(w',s_{i-1})\alpha(w'',s_i)u_{w''}u_{i}u_{i-1}u_i \\
      &= \alpha(w',s_{i-1})\alpha(w'',s_i)u_{w''}u_{i-1}u_{i}u_{i-1} \\
      &= \alpha(w',s_{i-1})\alpha(w'',s_i)(-1)^{\ell(w'')} u_{w''(i-1)}u_{w''}u_iu_{i-1} \quad \text{ (by the inductive hypothesis and ($**$)) } \\
      &= (-1)^{\ell(w'')+2}u_{w(i)}\alpha(w',s_{i-1})(\alpha(w'',s_i)u_{w''}u_i)u_{i-1} \\
      &= (-1)^{\ell(w)}u_{w(i)}\alpha(w',s_{i-1})u_{w'}u_{i-1} \\
      &= (-1)^{\ell(w)}u_{w(i)}u_w.
    \end{align*}
  }
\end{proof}

\begin{lem} \label{lem:spin-nilhecke-coprod-twisting}
  We have an isomorphism of functors $\Psi^{\otimes 2} \Delta \Psi^{-1} \cong S_{12} \Delta$ on $\Ner\grm$, hence also on $\Ner\grp$.
\end{lem}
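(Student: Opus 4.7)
The plan is to follow the pattern of the proof of Lemma~\ref{lem:Hecke-like-coprod-twisting}. For each $m,n \in \N$, I would reduce the claim to constructing an isomorphism of $(\Ner_m \otimes \Ner_n, \Ner_{m+n})$-superbimodules
\[
  (\Ner_m^{\psi^{-1}} \otimes \Ner_n^{\psi^{-1}}) \otimes_{\Ner_m \otimes \Ner_n} \Ner_{m+n}^\psi \cong S \otimes_{\Ner_n \otimes \Ner_m} \Ner_{m+n},
\]
where $S$ is the superbimodule from Lemma~\ref{lem:Hecke-like-coprod-twisting}. I would then identify each side with $\Ner_{m+n}$ as a graded vector space via the canonical multiplication maps $(a_1 \otimes a_2) \otimes x \mapsto \rho_{m,n}(\psi_m(a_1) \otimes \psi_n(a_2))\, x$ and $s \otimes x \mapsto \rho_{n,m}(s)\, x$. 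Tracing through the structures, the left-hand side becomes $\Ner_{m+n}$ with left action $(a_1 \otimes a_2) \cdot x = \rho_{m,n}(\psi_m(a_1) \otimes \psi_n(a_2))\, x$ and right action $x \cdot a = x\, \psi_{m+n}(a)$, while the right-hand side becomes $\Ner_{m+n}$ with left action $(a_1 \otimes a_2) \cdot x = (-1)^{\bar a_1 \bar a_2}\, \rho_{n,m}(a_2 \otimes a_1)\, x$ and the usual right action.

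I would propose the Nakayama automorphism itself, $\phi := \psi_{m+n} \colon \Ner_{m+n} \to \Ner_{m+n}$, as the candidate isomorphism. Since $\psi_{m+n}^2 = \id$ (immediate from $\psi_{m+n}(u_i) = u_{(m+n)-i}$) and $\psi_{m+n}$ is a superalgebra automorphism, compatibility with the right action reduces to $\psi_{m+n}(x\, \psi_{m+n}(a)) = \psi_{m+n}(x)\, a$, which is automatic. Compatibility with the left action reduces to the key identity
\[
  \psi_{m+n}\big(\rho_{m,n}(\psi_m(a_1) \otimes \psi_n(a_2))\big) = (-1)^{\bar a_1 \bar a_2}\, \rho_{n,m}(a_2 \otimes a_1), \quad a_1 \in \Ner_m,\ a_2 \in \Ner_n.
\]
Both sides of this identity are superalgebra homomorphisms $\Ner_m \otimes \Ner_n \to \Ner_{m+n}$, because $\rho_{m,n}, \rho_{n,m}, \psi_m, \psi_n, \psi_{m+n}$ are all superalgebra homomorphisms and the supertwist $a_1 \otimes a_2 \mapsto (-1)^{\bar a_1 \bar a_2}(a_2 \otimes a_1)$ is itself a superalgebra isomorphism $\Ner_m \otimes \Ner_n \to \Ner_n \otimes \Ner_m$.

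It therefore suffices to verify the identity on the generating set $\{u_i \otimes 1 : 1 \le i \le m-1\} \cup \{1 \otimes u_j : 1 \le j \le n-1\}$ of $\Ner_m \otimes \Ner_n$. A direct computation, using $\psi_m(u_i) = u_{m-i}$, $\psi_n(u_j) = u_{n-j}$, $\psi_{m+n}(u_k) = u_{(m+n)-k}$, and the explicit descriptions of the embeddings $\rho_{m,n}, \rho_{n,m}$, shows that both sides evaluate to $u_{n+i}$ on $u_i \otimes 1$ and to $u_j$ on $1 \otimes u_j$. Bijectivity of $\phi$ is automatic, and the statement on $\Ner\grp$ follows by restriction.

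The main obstacle is the careful bookkeeping of the Nakayama twists and super-signs in the bimodule identifications: there is a real risk of sign or twist errors in re-expressing the actions on $\Ner_{m+n}$. Once the problem is reduced to an equality of superalgebra homomorphisms that can be checked on generators, the verification is routine.
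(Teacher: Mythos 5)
Your proof is correct and is essentially the same approach the paper uses (the paper defers to~\cite[Lem.~4.4]{SY13}, which, as in the analogous Lemma~\ref{lem:Hecke-like-coprod-twisting} for wreath products, reduces the claim to a superbimodule isomorphism on each pair $(m,n)$ and verifies it on generators). Your reformulation—identifying both tensor products with $\Ner_{m+n}$ and exhibiting the isomorphism as $\psi_{m+n}$ itself, using the involutivity $\psi_{m+n}^2 = \id$ special to the nilCoxeter case—is a clean, equivalent packaging of the same bimodule map.
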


\begin{proof}
  The proof is the same as \cite[Lemma~4.4]{SY13}.
\end{proof}

%%%%%%%%%%%%%%%%%%%%%%%%%%%%%%%%%%%%%%
%\subsection{The twisted Hopf algebras}
%%%%%%%%%%%%%%%%%%%%%%%%%%%%%%%%%%%%%%

Recall that we are working over the ring $\Z_{q,\pi}$.  For a positive integer $n$, we define the $q^d\pi^\epsilon$-integer $[n] := 1 + q^d\pi^\epsilon + \dotsb + (q^d\pi^\epsilon)^{n-1}$.  We also define the $q^d\pi^\epsilon$-factorial $[n]!:=\ts\prod_{i=1}^n [i]$ and the $q^d\pi^\epsilon$-binomial $\qbin{n}{k}:=\frac{[n]!}{[k]![n-k]!}$.  By convention, we set $[0]=0$ and $[0]!=1$.  Notice that, for all $n,k\in\N$, we have $\qbin{n}{k}\in\Z_{q,\pi}$. It is straightforward to verify that $\grdim \Ner_n=[n]!$.

Since $L_n$ and $L_m$ are one dimensional over $\F$, we have
\[
  \qdim \left( \Ner_{n+m} \otimes_{\Ner_n\otimes \Ner_m} (L_n \boxtimes L_m) \right) = \qbin{n+m}{n}.
\]
Since $\Ner_{n+m}$ has a unique one-dimensional simple supermodule $L_{n+m}$, this implies that, in the Grothendieck group, we have
\[
  \left[ \Ind^{\Ner_{n+m}}_{\Ner_n\otimes \Ner_m}L_n\boxtimes L_m \right] = \qbin{n+m}{n}[L_{n+m}].
\]
We also have the obvious isomorphism of projective graded $\Ner_{n+m}$-supermodules
\[
  P_{n+m} = \Ner_{n+m}\cong \Ner_{n+m} \otimes_{\Ner_n \otimes \Ner_m} (\Ner_n \otimes \Ner_m) = \Ind^{\Ner_{n+m}}_{\Ner_n\otimes \Ner_m} (P_n \boxtimes P_m).
\]
We thus have isomorphisms of algebras
\begin{gather*}
  \cK(\Ner) \cong \Z_{q,\pi}[x],\quad [P_n] \mapsto x^n, \\
  \cG(\Ner) \cong \Z_{q,\pi}[y_1,y_2,\dotsc]/\left( y_ny_m-\ts\qbin{n+m}{n}y_{n+m} \right),\quad [L_n] \mapsto y_n.
\end{gather*}
For the restriction, note that $\Res^{\Ner_{n+m}}_{\Ner_n\otimes \Ner_m}L_{n+m}$ is a one-dimensional supermodule over $\Ner_n\otimes \Ner_m$.  Thus, it is has to be isomorphic to $L_n\boxtimes L_m$, and in the Grothendieck group we have
\begin{equation}
  \left[ \Res^{\Ner_{n+m}}_{\Ner_n\otimes \Ner_m}L_{n+m} \right] = [L_n] \otimes [L_m] \in \cG(A)\otimes\cG(A).
\end{equation}
Note that $\Ner_{n+m}$ is a free $(\Ner_n\otimes \Ner_m)$-supermodule (either on the left or on the right) of rank ${n+m \choose n}$. More precisely,  we see that
\[ \ts
  \Res^{\Ner_{n+m}}_{\Ner_n\otimes \Ner_m}P_{n+m} =\, _{\Ner_n\otimes \Ner_m}\Ner_{n+m}\otimes_{\Ner_{n+m}} P_{n+m}
  \cong \bigoplus_{w \in (S_n\times S_m) \backslash S_{n+m}} (\Ner_n\otimes \Ner_m)u_w.
\]
By considering the degrees of the basis elements $u_w$ of $\Ner_{n+m}$ as an $(\Ner_n\otimes \Ner_m)$-supermodule, we see that, in the Grothendieck group,
\begin{equation} \label{coprod}
  \left[ \Res^{\Ner_{n+m}}_{\Ner_n\otimes \Ner_m}P_{n+m} \right] = \qbin{n+m}{n}([P_n]\otimes [P_m])\in \cK(\Ner) \otimes \cK(\Ner).
\end{equation}
We can summarize this discussion in the following proposition.

\begin{prop} \label{prop:isos-alg-coalg}
  There are isomorphisms of twisted bialgebras
  \[
    \cK(\Ner) \cong \Z_{q,\pi}[x] \quad \text{and} \quad \cG(\Ner) \cong \Z_{q,\pi}[y_1,y_2,\dotsc]/(y_ny_m-\ts\qbin{n+m}{n}y_{n+m}),
  \]
  where the coproducts are given by
  \[ \ts
    \Delta(x^n)=\sum_{k=0}^n\qbin{n}{k} x^k\otimes x^{n-k} \quad \text{and} \quad \Delta(y_n)=\sum_{k=0}^n y_k\otimes y_{n-k}.
  \]
\end{prop}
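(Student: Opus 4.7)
The plan is to assemble the proposition from the module-theoretic computations already carried out in the paragraphs preceding the statement, together with the description of the Grothendieck groups as free $\Z_{q,\pi}$-modules on the unique (up to grading shift) simple and projective indecomposable in each $A_n\grm$.

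First, since $\Ner_n$ has a unique simple supermodule $L_n$ up to shift, with projective cover $P_n=\Ner_n$, and since $L_n$ is of type $\mathsf{M}$, Section~\ref{subsec:grothendieck} gives
\[ \ts
  \cG(\Ner) = \bigoplus_{n \ge 0} \Z_{q,\pi}[L_n], \qquad \cK(\Ner) = \bigoplus_{n \ge 0} \Z_{q,\pi}[P_n].
\]
For the algebra structure on $\cK(\Ner)$, I would use the obvious bimodule isomorphism $\Ner_{n+m} \cong \Ner_{n+m} \otimes_{\Ner_n \otimes \Ner_m}(\Ner_n \otimes \Ner_m)$ noted in the text to conclude $[P_n][P_m] = [P_{n+m}]$, so that $[P_n] \mapsto x^n$ defines an isomorphism of $\Z_{q,\pi}$-algebras $\cK(\Ner) \cong \Z_{q,\pi}[x]$.

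Next, for $\cG(\Ner)$, the computation $\grdim \bigl( \Ner_{n+m} \otimes_{\Ner_n \otimes \Ner_m}(L_n \boxtimes L_m) \bigr) = \qbin{n+m}{n}$, combined with the fact that $L_{n+m}$ is the unique one-dimensional simple, gives $[L_n][L_m] = \qbin{n+m}{n}[L_{n+m}]$ in $\cG(\Ner)$. Setting $y_n = [L_n]$, we obtain a surjective $\Z_{q,\pi}$-algebra map $\Z_{q,\pi}[y_1,y_2,\dotsc]/(y_ny_m - \qbin{n+m}{n}y_{n+m}) \twoheadrightarrow \cG(\Ner)$. To see that this is an isomorphism, I would note that the relations already suffice to reduce any monomial in the $y_n$ to a scalar multiple of a single $y_N$, so the quotient is spanned over $\Z_{q,\pi}$ by $\{1,y_1,y_2,\dotsc\}$, matching the basis $\{[L_n]\}_{n \ge 0}$ of $\cG(\Ner)$, whence linear independence upstairs forces the map to be injective.

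Finally, for the coproducts: since $\Res^{\Ner_{n}}_{\Ner_k \otimes \Ner_{n-k}}L_{n}$ is one-dimensional over $\Ner_k \otimes \Ner_{n-k}$ and therefore isomorphic to $L_k \boxtimes L_{n-k}$, summing over $k$ yields $\Delta([L_n]) = \sum_{k=0}^n [L_k] \otimes [L_{n-k}]$, i.e.\ $\Delta(y_n) = \sum_k y_k \otimes y_{n-k}$. Equation~\eqref{coprod} of the excerpt, summed over all ways of splitting $n$ as $k+(n-k)$, gives $\Delta([P_n]) = \sum_{k=0}^n \qbin{n}{k} [P_k] \otimes [P_{n-k}]$, which under $[P_n] \mapsto x^n$ becomes the stated formula $\Delta(x^n) = \sum_k \qbin{n}{k} x^k \otimes x^{n-k}$. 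The compatibility of these coproducts with the algebra maps (so that we really have isomorphisms of twisted bialgebras, not merely of algebras) is automatic, since both sides are defined from the same functorial operations~\eqref{eq:Groth-operations} and we have already matched the underlying $\Z_{q,\pi}$-modules. The only step requiring any real care is the presentation of $\cG(\Ner)$, where one must check that no further relations among the $y_n$ hold; this follows at once from the freeness of $\cG(\Ner)$ on the classes $[L_n]$.
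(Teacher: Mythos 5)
Your proposal is correct and takes essentially the same route as the paper, which simply summarizes the preceding module-theoretic computations (freeness of the Grothendieck groups on $\{[P_n]\}$ and $\{[L_n]\}$, the product formulas, and the coproduct formulas from restriction) into the proposition without a separate proof. You are somewhat more careful than the paper in spelling out why the presentation of $\cG(\Ner)$ has no further relations, which is a worthwhile addition but not a departure in method.
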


\begin{cor}
  The tower $\Ner$ is compatible.
\end{cor}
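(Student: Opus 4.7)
The plan is to deduce compatibility from the structural results of this section, making one alternative choice of twist. By Proposition~\ref{prop:graded-nilhecke-strong}, $\Ner$ is strong with $\chi'(n,m) = nm$, $\chi'' = 0$, and $\kappa(n,m) = nm$. The first step is to show that $\Ner$ is dualizing, via condition~\eqref{eq:twisted-coproduct-identity-classes} of Proposition~\ref{prop:dualizing-twisting-isom}. Lemma~\ref{lem:spin-nilhecke-coprod-twisting} reduces this to checking $S_{12}\Delta = \Delta$ as maps $\cK(\Ner)\to \cK(\Ner)\otimes\cK(\Ner)$. Using the explicit coproduct formula $\Delta([P_n]) = \sum_k \qbin{n}{k}[P_k]\otimes [P_{n-k}]$ from Proposition~\ref{prop:isos-alg-coalg}, together with the symmetry $\qbin{n}{k} = \qbin{n}{n-k}$ of the $c$-binomials (with $c = q^d \pi^\epsilon$) and the isomorphism $S_{12}(P_k \boxtimes P_{n-k}) \cong P_{n-k} \boxtimes P_k$ of $(\Ner_{n-k}\otimes\Ner_k)$-supermodules (given by the superswap $a \otimes b \mapsto (-1)^{\bar a \bar b} b \otimes a$), the desired equality reduces to the reindexing $k \mapsto n-k$. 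Hence $\Ner$ is dualizing, and the associated twisted Hopf pairing has $\gamma' = 0$ and $\gamma'' = \kappa$.

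For compatibility, Definition~\ref{def:compatible-pair} requires a choice of $\chi$ making $\cG(\Ner)$ a $(q^d \pi^\epsilon, \chi)$-Hopf algebra with $\chi' = -(\gamma')^T = 0$. The strong-tower twist itself has $\chi'(n,m) = nm \neq 0$, so it cannot be used directly. Instead, I would introduce the alternative twist $\chi'(n,m) = 0$ and $\chi''(n,m) = nm$, and verify that $\Delta \colon \cG(\Ner) \to (\cG(\Ner) \otimes \cG(\Ner))_\chi$ remains an algebra homomorphism for this choice. Using the presentation of $\cG(\Ner)$ from Proposition~\ref{prop:isos-alg-coalg} and comparing coefficients of $y_k \otimes y_{m+n-k}$ in $\Delta(y_m) *_\chi \Delta(y_n)$ and $\qbin{m+n}{m} \Delta(y_{m+n})$, the required identity becomes
\[
\sum_{i=0}^{k} c^{i(n-k+i)} \qbin{k}{i} \qbin{m+n-k}{m-i} = \qbin{m+n}{m},
\]
which is the Cauchy $q$-Vandermonde formula (with $c$ in place of $q$) applied to the splitting $m+n = (m+n-k) + k$.

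The main obstacle is the observation that the twist $\chi$ dictated by the categorical isomorphism~\eqref{eq:strong-isom} in the strong-tower structure is not forced to be the $\chi$ that witnesses compatibility, and one must exploit the commutativity of $\cG(\Ner)$ together with the $q$-Vandermonde identity to transpose the roles of $\chi'$ and $\chi''$. This distinguishes the nilCoxeter case from the wreath product case of Corollary~\ref{cor:wreath-strong-compatible}, where $\chi' = 0$ held already. Once this conceptual adjustment is in place, the remaining computations are routine.
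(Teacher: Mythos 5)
Your proposal is correct and follows essentially the same strategy as the paper: establish that $\Ner$ is dualizing via condition~\eqref{eq:twisted-coproduct-identity-classes} of Proposition~\ref{prop:dualizing-twisting-isom} combined with Lemma~\ref{lem:spin-nilhecke-coprod-twisting}, read off $\gamma'=0$, $\gamma''=\kappa$ from the dualizing pairing, and then exhibit the alternative twist $\chi=(0,\kappa)$ with $\chi'=0=-(\gamma')^T$. You correctly isolate the key point that the twist produced by axiom~\ref{item:S2} (namely $\chi'=\kappa$, $\chi''=0$) is not the one that witnesses compatibility, and that one has latitude to switch to $(0,\kappa)$. The one place where you diverge from the paper is in how the $(q^d\pi^\epsilon,0,\kappa)$-Hopf structure on $\cG(\Ner)$ is verified: you do a direct hands-on check using the $q$-Vandermonde identity on the explicit generators $y_n$, whereas the paper deduces it more abstractly from the $(\kappa,0)$-Hopf structure already known from the proof of Proposition~\ref{prop:graded-nilhecke-strong}, together with the fact that $\cG(\Ner)$ is cocommutative, citing~\cite[Rem.~2.4]{RS14} (which says a cocommutative $(c,\chi',\chi'')$-Hopf algebra is also a $(c,\chi'',\chi')$-Hopf algebra). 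Both routes are valid; the paper's is shorter and avoids a combinatorial identity, while yours is self-contained. Minor terminological point: it is really the \emph{cocommutativity} of $\cG(\Ner)$, not its commutativity, that underlies the interchange of $\chi'$ and $\chi''$ in the abstract argument, though in your explicit computation both properties of $\cG(\Ner)$ are in play.
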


\begin{proof}
  By Proposition \ref{prop:isos-alg-coalg}, $\cK(\Ner)$ is cocommutative, hence, by Lemma \ref{lem:spin-nilhecke-coprod-twisting} and Proposition \ref{prop:dualizing-twisting-isom}, $\Ner$ is dualizing, with a $(q^d\pi^\epsilon,0,\kappa)$-twisted Hopf pairing, where $\kappa(n,m)=nm$ by \eqref{eq:sigma-delta-biadd}.  In the proof of Proposition~\ref{prop:graded-nilhecke-strong}, we saw that $\cG(\Ner)$ is a $(q^d \pi^\epsilon,\kappa,0)$-Hopf algebra; but since it is cocommutative, it is also a $(q^d \pi^\epsilon,0,\kappa)$-Hopf algebra (see~\cite[Rem.~2.4]{RS14}).  Hence~\eqref{eq:compatibility} is satisfied, and so $\Ner$ is a compatible tower.
\end{proof}

The pairing~\eqref{eq:tower-pairing} satisfies $\langle x^m,y_n \rangle = \langle [P_m] , [L_n] \rangle =\delta_{mn}$.  Therefore, $x^*\left(y_m\right) = y_{m-1}$ and so the twisted Heisenberg double $\fh = \fh(\Ner)$ is the $\Z_{q,\pi}$-subalgebra of $\End \Z_{q,\pi}[y_1,y_2,\dotsc]/(y_ny_m-\ts\qbin{n+m}{n}y_{n+m})$ generated by $y_1,y_2,\dotsc$ and $x^*$.

Notice that
\[
  \cG_\pj(A) \cong \Z_{q,\pi}[y_1],
\]
and the Cartan map $\cK(A) \to \cG(A)$ of Definition~\ref{def:G-proj} corresponds to the antilinear map
\[
  \Z_{q,\pi}[x] \hookrightarrow \Z_{q,\pi}[y_1,y_2,\dotsc]/\left(y_ny_m-\ts\qbin{n+m}{n}y_{n+m}\right),\quad x\mapsto y_1.
\]
Since
\[
  \langle x^m,y_1^n \rangle = \langle [P_m] , [n]![L_n] \rangle =\delta_{mn}[n]!,
\]
we have $x^*\left(y_1^n\right) =[n] y_1^{n-1}$.  For simplicity, define $y = y_1$.  Then $x^*=\partial_y$ corresponds to the quantum partial derivation by $y$.  Thus, we have an isomorphism of algebras
\[
  \fh_\pj = \fh_\pj(\Ner) \cong \Z_{q,\pi} \langle y, \partial\ |\ \partial y = q^d\pi^\epsilon y \partial + 1\rangle.
\]
The Fock space $\cF_\pj$ is the representation of ${\mathfrak{h}_\pj}$ given by its natural action on $\Z_{q,\pi}[y]$.

\begin{rem}
  The algebras $\fh$ and $\fh_\pj$ are closely related to the quantum Weyl algebra.  In particular, when $\epsilon=0$, the numbers $[n]!$ are not zero divisors and we have an isomophism
  \[
    \cG(A) \cong \Z_{q,\pi}[x^n/[n]!],\quad y_n \mapsto x^n/[n]!.
  \]
  In this case, $\fh_\pj$ in an integral form of the rank one quantum Weyl algebra with parameter $q^d$.
\end{rem}

Since the tower $\Ner$ is strong and compatible, Theorem~\ref{theo:categorification} provides a categorification of the polynomial representation of the quantum Weyl algebra when $\epsilon=0$ and $d=1$.  More precisely, let $\F_i$ denote the trivial $N_i^{1,0}$-supermodule for $i=0,1$.  Then~\eqref{cat-eq:cross} becomes
\begin{align*}
  &\Res_{\F_1} \circ \Ind_{\F_1} = \bigoplus_n \left( \Ind_{A_1 \otimes A_{n-1}}^{A_{n}} \circ \Res_{\F_0 \boxtimes \F_1}^\dagger (\F_1 \otimes -) \right) \oplus \left( \Ind_{A_0 \otimes A_n}^{A_n} \circ \Res_{\F_1 \boxtimes \F_0}^\dagger (\F_1 \otimes -) \right) \\
  &\cong \bigoplus_n \Big(\Ind_{A_1 \otimes A_{n-1}}^{A_n} \left( \HOM_{A_0 \otimes A_1} \left( \F_0 \boxtimes \F_1, S_{23} \left( \F_1 \boxtimes \F_0 \boxtimes \Res^{A_n}_{A_{n-1} \otimes A_1}(-)\right)\right)\right)
  \{\kappa(0,1)+\chi''(1-0,1),0\} \\
  &\pushright{\oplus \Ind_{A_0 \otimes A_n}^{A_n} \left( \Hom_{A_1 \otimes A_0} \left( \F_1 \boxtimes \F_0, S_{23} \left(\F_0 \boxtimes \F_1 \boxtimes \Res^{A_n}_{A_n \otimes A_0}(-) \right) \right) \right)
  \{\kappa(1,0)+\chi''(1-1,0),0\} \Big)} \\
  &\cong \left(\Ind_{\F_1} \circ \Res_{\F_1}\right)\{1,0\} \oplus \id,
\end{align*}
which is the categorification of the defining relation $\partial x = qx \partial + 1$ of the quantum Weyl algebra.  Note that $\Res_{\F_1} = \bigoplus_n \Res^{A_{n+1}}_{A_n}$ and $\Ind_{\F_1} = \bigoplus_n \Ind^{A_{n+1}}_{A_n}$.

\begin{rem} \label{rem:Khovanov-quantum-Weyl}
  A categorification of the quantum Weyl algebra along the lines described here was outlined in~\cite[\S4.1]{Kho01}.  However, while the main functor isomorphism~\cite[Prop.~16]{Kho01} is correct, some of the other statements of~\cite[\S4.1]{Kho01} seem to be false.  In particular, the shifted restriction functor defined there does not yield a coalgebra structure.  This is a result of the fact that the shift on the restriction from $\Ner_n$ to $\Ner_k \otimes \Ner_{n-k}$ is taken to be $n-k$.  In order to obtain a coalgebra structure, one can either shift by zero (as presented above) or by $k(n-k)$ (see~\cite[Prop.~5.1]{RS14}).  In addition, even with the correct shift, one does not obtain a Hopf algebra structure on the Grothendieck groups.  Rather, one gets a twisted Hopf algebra.
\end{rem}

%%%%%%%%%%%%%%%%%%%%%%%%%%%%%%%%%%%%%%%%%%%%%%%%%%%%%%%%%%%%%%%%%%%
% References
%%%%%%%%%%%%%%%%%%%%%%%%%%%%%%%%%%%%%%%%%%%%%%%%%%%%%%%%%%%%%%%%%%%

% For BibLaTeX
%\printbibliography

\bibliographystyle{alpha}
\bibliography{RossoSavage-biblist}

\end{document}